\documentclass[letterpaper,10pt]{article}

\usepackage[margin=0.5in]{geometry}

\usepackage{fullpage}
\usepackage{enumerate}
\usepackage{authblk}
\usepackage[colorinlistoftodos]{todonotes}
\usepackage{verbatim}
\usepackage{section, amsthm, textcase, setspace, amssymb, lineno, amsmath, amssymb, amsfonts, latexsym, fancyhdr, longtable, ulem, mathtools}
\usepackage{epsfig, graphicx, pstricks,pst-grad,pst-text,tikz,colortbl}
\usepackage{epsf}
\usepackage{graphicx, color}
\usepackage{float}
\usepackage[rflt]{floatflt}
\usepackage{amsfonts}
\usepackage{latexsym,enumitem}
\usetikzlibrary{fit,matrix,positioning}
\usepackage{pdflscape}
\usetikzlibrary{decorations.pathreplacing}
\usepackage{mathrsfs}
\usepackage{faktor}

\definecolor{darkgreen}{rgb}{0, 0.5, 0}

\newtheorem{theorem}{Theorem}
\newtheorem{lemma}[theorem]{Lemma}
\newtheorem{corollary}[theorem]{Corollary}

\newtheorem{conj}[theorem]{Conjecture}

\newtheorem{definition}[theorem]{Definition}

\newtheorem{Ex}[theorem]{Example}

\newtheorem*{theorem*}{Theorem}
\newtheorem{remark}[theorem]{Remark}

\newcommand{\bl}[1]{{\color{blue} #1}}

\newcommand{\ind}{{\rm ind \hspace{.0cm}}}

\DeclareMathOperator{\ad}{ad}

\begin{document}

\title{On toral posets and contact Lie algebras}

\author[*]{Nicholas W. Mayers}
\author[**]{Nicholas Russoniello}

\affil[*]{Department of Mathematics, North Carolina State University, Raleigh, NC 27695}
\affil[**]{Department of Mathematics, College of William \& Mary, Williamsburg, VA 23185}

\maketitle


\begin{abstract}
\noindent
A $(2k+1)-$dimensional Lie algebra is called contact if it admits a one-form $\varphi$ such that $\varphi\wedge(d\varphi)^k\neq 0.$ Here, we extend recent work to describe a combinatorial procedure for generating contact, type-A Lie poset algebras whose associated posets have chains of arbitrary cardinality, and we conjecture that our construction leads to a complete characterization.
\end{abstract}

\noindent
\textit{Mathematics Subject Classification 2020}: 17B30, 05E16

\noindent 
\textit{Key Words and Phrases}: contact Lie algebra,  Lie poset algebra, index, contactization

\section{Introduction}

Type-A Lie poset algebras are subalgebras of $\mathfrak{sl}(n)$ which are defined by an associated poset. Recent work has focused on studying the structure of these algebras combinatorially \textbf{\cite{CG, SeriesA, Binary,ContactLiePoset, breadth}}. Of interest here is the work of (\textbf{\cite{ContactLiePoset}}, 2021), where the authors give a complete characterization of those posets with chains of cardinality at most three whose type-A Lie poset algebras are ``contact." Building on the work in both (\textbf{\cite{Binary}}, 2021) and \textbf{\cite{ContactLiePoset}}, we extend the constructions given in \textbf{\cite{ContactLiePoset}} to include posets with chains of arbitrary size, and we conjecture a complete description of all posets associated to contact, type-A Lie poset algebras.
\bigskip

A ($2k+1)-$dimensional Lie algebra $\mathfrak{g}$ is said to be \textit{contact} if there exists $\varphi\in\mathfrak{g}^*$ satisfying $\varphi\wedge (d\varphi)^k\ne 0$, where $d\varphi(x,y)=-\varphi([x,y])$, for all $x,y\in\mathfrak{g}.$ The one-form $\varphi$ is called a \textit{contact form}, $\varphi\wedge (d\varphi)^k$ is a \textit{volume form} on the underlying Lie group $G,$ and $\varphi$ defines a \textit{contact structure}, i.e., a smooth, maximally non-integrable, codimension-one distribution on $G.$ While contact manifolds and their applications have been extensively explored for a number of years \textbf{\cite{Boyer,Geiges,Ghosh,Herczeg,Machon}}, the study of contact structures arising from Lie algebras, i.e., \textit{left-invariant} contact structures on Lie groups, has only recently garnered significant attention \textbf{\cite{Alvarez,Diatta,GR,Khakimdjanov,RS,InvCon}}. Included in such works are various methods for constructing and classifying families of contact Lie algebras; however, most approaches are geometric or algebraic in nature. Here, as in \textbf{\cite{ContactLiePoset}} and (\textbf{\cite{seaweedA}}, 2022), our aim is to construct an infinite family of contact Lie algebras combinatorially. Conducive to our approach are recent combinatorial results regarding a Lie-algebraic invariant called the ``index."

The \textit{index} of a Lie algebra was first introduced by Dixmier (\textbf{\cite{D}}, 1977) and is defined as $$\ind(\mathfrak{g})=\min_{\varphi\in\mathfrak{g}^*}\dim(\ker(d\varphi)),$$ where a one-form $\varphi$ for which $\ind(\mathfrak{g})=\dim(\ker(d\varphi))$ is called \textit{regular}. Contact Lie algebras have index one,\footnote{The converse is not true in general. For example, the Lie algebra $\mathfrak{g}=\langle e_1,e_2,e_3\rangle$ with relations $[e_1,e_2]=e_2$ and $[e_1,e_3]=e_3$ has index one but is not contact.} and a contact form is necessarily regular. Consequently, our approach involves first identifying Lie algebras with index one and then determining which of those admits a contact form. In order to find index-one algebras, we restrict our attention to a family of subalgebras of $A_{n-1}=\mathfrak{sl}(n)$ called ``type-A Lie poset algebras," for which combinatorial index formulas have been of topical interest.

Formally, type-A Lie poset algebras are Lie subalgebras of $\mathfrak{sl}(n)$ which lie between a Cartan and associated Borel subalgebra. Such algebras can also be defined in terms of an associated poset (see Section~\ref{sec:poset}). In \textbf{\cite{Binary}}, the authors provide a combinatorial recipe -- in terms of ``building-block" posets and ``gluing rules" -- for constructing so-called ``toral" posets whose type-A Lie poset algebras have index given by a combinatorial formula. The authors' primary motivation for the introduction of toral posets was to characterize type-A Lie poset algebras that are \textit{Frobenius}, i.e., have index equal to zero. It is conjectured in \textbf{\cite{Binary}} that every Frobenius, type-A Lie poset algebra corresponds to a toral poset. However, as is, toral posets cannot correspond to contact, type-A Lie poset algebras. Having said that, results of \textbf{\cite{ContactLiePoset}} suggest that a slight modification to the notion of toral poset can allow for the associated algebra to be contact.

Using combinatorial index formulas for type-A Lie poset algebras introduced in (\textbf{\cite{SeriesA}}, 2021), the authors of \textbf{\cite{ContactLiePoset}} give a complete characterization of those posets with chains of cardinality at most three whose associated type-A Lie poset algebras are contact. Interestingly, such posets are defined via a combinatorial recipe tantamount to that of toral posets with one additional ``building-block" poset required. Inspired by this observation, here we extend the notion of toral poset via the introduction of ``contact building blocks," effectively combining the construction paradigms of \textbf{\cite{Binary}} and \textbf{\cite{ContactLiePoset}}. As a result, we are able to generate toral posets (with chains of arbitrary size) associated to contact, type-A Lie poset algebras. Further, we conjecture that all contact, type-A Lie poset algebras are either associated to posets of this form or associated to a direct sum of posets of this form.

The paper is organized as follows. In Section~\ref{sec:poset}, we give the necessary preliminaries from the theory of posets and Lie algebras. In Section~\ref{sec:ntp}, we introduce the notion of toral-pair, which forms the original set of building blocks of toral posets as defined in \textbf{\cite{Binary}}, and provide some new examples. In Section~\ref{sec:ctp}, we define the contact analogue of a toral-pair, leading to the extended notion of toral poset given in Section~\ref{sec:contoral}. In Section~\ref{sec:contactforms}, we characterize toral posets corresponding to contact, type-A Lie poset algebras. Directions for further research involving ``combinatorial contactization" are discussed in Section~\ref{sec:epilogue}.

\section{Preliminaries}\label{sec:poset}

A \textit{finite poset} $(\mathcal{P}, \preceq_{\mathcal{P}})$ consists of a finite set $\mathcal{P}=\{1,2,\hdots,n\}$ together with a binary relation $\preceq_{\mathcal{P}}$ which is reflexive, anti-symmetric, and transitive. We tacitly assume that if $x\preceq_{\mathcal{P}}y$ for $x,y\in\mathcal{P}$, then $x\le y$, where $\le$ denotes the natural ordering on $\mathbb{Z}$. When no confusion will arise, we simply denote a poset $(\mathcal{P}, \preceq_{\mathcal{P}})$ by $\mathcal{P}$, and $\preceq_{\mathcal{P}}$ by $\preceq$. Two posets $\mathcal{P}$ and $\mathcal{Q}$ are \textit{isomorphic} if there exists an order-preserving bijection $\mathcal{P}\to\mathcal{Q}$.

Let $\mathcal{P}$ be a finite poset and take $x,y\in\mathcal{P}$. If $x\preceq y$ and $x\neq y$, then we call $x\preceq y$ a \textit{strict relation} and write $x\prec y$. Let $Rel(\mathcal{P})$, $Ext(\mathcal{P})$, and $Rel_E(\mathcal{P})$ denote, respectively, the set of strict relations between elements of $\mathcal{P}$, the set of minimal and maximal elements of $\mathcal{P}$, and the set of strict relations between elements of $Ext(\mathcal{P})$. If $x\prec y$ and there exists no $z\in \mathcal{P}$ satisfying $x\prec z\prec y$, then $x\prec y$ is a \textit{covering relation}. Covering relations are used to define a visual representation of $\mathcal{P}$ called the \textit{Hasse diagram} -- a graph whose vertices correspond to elements of $\mathcal{P}$ and whose edges correspond to covering relations (see  Figure~\ref{fig:Hasse} (a)). A poset $\mathcal{P}$ is called \textit{connected} if the Hasse diagram of $\mathcal{P}$ is connected as a graph and called \textit{disconnected} otherwise. Extending the Hasse diagram of $\mathcal{P}$ by allowing all elements of $Rel(\mathcal{P})$ to define edges results in the \textit{comparability graph} of $\mathcal{P}$ (see Figure~\ref{fig:Hasse} (b)).

For a subset $S\subset\mathcal{P}$, the \textit{induced subposet generated by $S$} is the poset $\mathcal{P}_S$ on $S$, where $i\prec_{\mathcal{P}_S}j$ if and only if $i\prec_{\mathcal{P}}j$. A totally ordered subset $S\subset\mathcal{P}$ is called a \textit{chain}. The \textit{height} of $\mathcal{P}$ is one less than the largest cardinality of a chain in $\mathcal{P}$. One can define a simplicial complex $\Sigma(\mathcal{P})$ by having chains of cardinality $n$ in $\mathcal{P}$ define the $(n-1)-$dimensional faces of $\Sigma(\mathcal{P})$ (see Figure~\ref{fig:Hasse} (c)). A subset $I\subset\mathcal{P}$ is an \textit{order ideal} if $y\prec x\in I$ implies $y\in I$. Similarly, a subset $F\subset\mathcal{P}$ is a \textit{filter} if $F\ni x\prec y$ implies $y\in F$.


\begin{Ex}\label{ex:not}
Consider the poset $\mathcal{P}=\{1,2,3,4\}$ with $1\prec 2\prec 3,4$. We have $$Rel(\mathcal{P})=\{1\prec 2,1\prec 3,1\prec 4,2\prec 3,2\prec 4\},~~ Ext(\mathcal{P})=\{1,3,4\},~~\text{and}~~ Rel_E(\mathcal{P})=\{1\prec 3, 1\prec 4\}.$$

\fbox{\begin{minipage}{42em} 
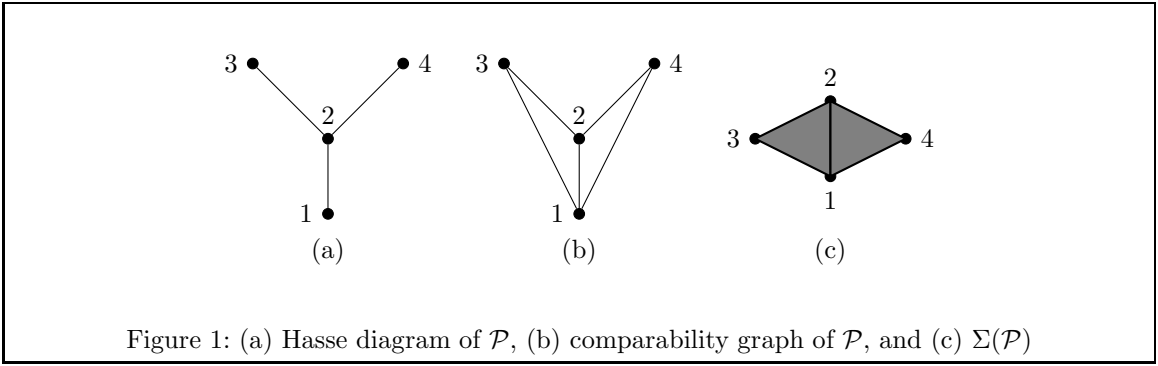
\begin{figure}[H]
$$\begin{tikzpicture}
	\node (1) at (0, 0) [circle, draw = black, fill = black, inner sep = 0.5mm, label=left:{$1$}]{};
	\node (2) at (0, 1)[circle, draw = black, fill = black, inner sep = 0.5mm, label=above:{$2$}] {};
	\node (3) at (-1, 2) [circle, draw = black, fill = black, inner sep = 0.5mm, label=left:{$3$}] {};
	\node (4) at (1, 2) [circle, draw = black, fill = black, inner sep = 0.5mm, label=right:{$4$}] {};
	\node (5) at (0,-0.5) {(a)};
    \draw (1)--(2);
    \draw (2)--(3);
    \draw (2)--(4);
\end{tikzpicture}\quad\begin{tikzpicture}
	\node (1) at (0, 0) [circle, draw = black, fill = black, inner sep = 0.5mm, label=left:{$1$}]{};
	\node (2) at (0, 1)[circle, draw = black, fill = black, inner sep = 0.5mm, label=above:{$2$}] {};
	\node (3) at (-1, 2) [circle, draw = black, fill = black, inner sep = 0.5mm, label=left:{$3$}] {};
	\node (4) at (1, 2) [circle, draw = black, fill = black, inner sep = 0.5mm, label=right:{$4$}] {};
	\node (5) at (0,-0.5) {(b)};
    \draw (1)--(2);
    \draw (2)--(3);
    \draw (2)--(4);
    \draw (1)--(3);
    \draw (1)--(4);
\end{tikzpicture}\quad\begin{tikzpicture}
	\node (1) at (0, 0) [circle, draw = black, fill = black, inner sep = 0.5mm, label=below:{$1$}]{};
	\node (2) at (-1, 0.5)[circle, draw = black, fill = black, inner sep = 0.5mm,label=left:{$3$}] {};
    \node (3) at (0, 1) [circle, draw = black, fill = black, inner sep = 0.5mm,label=above:{$2$}]{};
    \node (4) at (1, 0.5) [circle, draw = black, fill = black, inner sep = 0.5mm,label=right:{$4$}]{};
    \node (5) at (0,-1) {(c)};
    \draw [fill=gray,thick] (0, 0)--(-1, 0.5)--(0, 1)--(0, 0);
    \draw [fill=gray,thick] (0, 0)--(1, 0.5)--(0, 1)--(0, 0);
\end{tikzpicture}$$
\caption{(a) Hasse diagram of $\mathcal{P}$, (b) comparability graph of $\mathcal{P}$, and (c) $\Sigma(\mathcal{P})$}\label{fig:Hasse}
\end{figure} \end{minipage}}
\bigskip

\noindent
Note that $\{1,2\}\subset \mathcal{P}$ is both an order ideal and a chain of $\mathcal{P}$, but not a filter; meanwhile 
$\{3,4\}\subset \mathcal{P}$ is a filter, but neither an order ideal nor a chain. 
\end{Ex}

Let $\mathcal{P}$ be a finite poset. The (associative) \textit{incidence algebra} $A(\mathcal{P})=A(\mathcal{P}, \mathbb{C})$ is the span over $\mathbb{C}$ of elements $E_{p_i,p_j}$, for $p_i\preceq p_j$, with multiplication given by setting $E_{p_i,p_j}E_{p_k,p_l}=E_{p_i,p_l}$ if $p_j=p_k$ and $0$ otherwise.  The \textit{trace} of an element $\sum c_{p_i,p_j}E_{p_i,p_j}$ is $\sum c_{p_i,p_i}$.  We can equip $A$ with the commutator bracket $[a,b]=ab-ba$, where juxtaposition denotes the product in $A(\mathcal{P})$, to produce the \textit{Lie poset algebra} $\mathfrak{g}(\mathcal{P})
=\mathfrak{g}(\mathcal{P}, \mathbb{C})$. 

\begin{Ex}\label{ex:gstargate}
If $\mathcal{P}$ is the poset of Example~\ref{ex:not}, then $\mathfrak{g}(\mathcal{P})$ is the span over $\mathbb{C}$ of the elements of $$\{E_{1,1},E_{2,2},E_{3,3},E_{4,4},E_{1,2},E_{1,3},E_{1,4},E_{2,3},E_{2,4}\}.$$
\end{Ex}

\noindent
Restricting to the trace-zero elements of $\mathfrak{g}(\mathcal{P})$ results in the \textit{type-A Lie poset algebra} $\mathfrak{g}_A(\mathcal{P})$.

\begin{Ex}
If $\mathfrak{g}(\mathcal{P})$ is as in Example~\ref{ex:gstargate}, then $\mathfrak{g}_A(\mathcal{P})$ is the span over $\mathbb{C}$ of the elements of $$\{E_{1,1}-E_{2,2},E_{2,2}-E_{3,3},E_{3,3}-E_{4,4},E_{1,2},E_{1,3},E_{1,4},E_{2,3},E_{2,4}\}.$$
\end{Ex}

\begin{remark}
Isomorphic posets correspond to isomorphic \textup(type-A\textup) Lie poset algebras.
\end{remark}

Of principal interest here are type-A Lie poset algebras that are contact. Recall that a $(2k+1)-$dimensional Lie algebra $\mathfrak{g}$ is \textit{contact} if there exists $\varphi\in\mathfrak{g}^*$ such that $\varphi  \wedge (d\varphi)^k\ne 0$, where $d\varphi(x,y)=-\varphi([x,y])$, for all $x,y\in\mathfrak{g}.$ The one-form $\varphi$  is called a \textit{contact form}. Ongoing, we make use of the following alternative characterization of contact Lie algebras.

\begin{lemma}[\textbf{\cite{seaweedA}}, Lemma 5]\label{lem:kernel}
If $\ind(\mathfrak{g})=1,$ then a one-form $\varphi\in\mathfrak{g}^*$ is contact if and only if $\varphi$ is regular and there exists an element $B\in\mathfrak{g}$ with $\ker(d\varphi)=\textup{span}\{B\}$ and $\varphi(B)\neq 0.$
\end{lemma}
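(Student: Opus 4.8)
The plan is to reduce the statement to a short piece of exterior algebra organized around the contraction (interior product) $\iota_B$ of an alternating form by an element $B\in\mathfrak{g}$, where $\varphi\in\mathfrak{g}^*$ and $d\varphi\in\textstyle\bigwedge^2\mathfrak{g}^*$ are regarded as alternating forms. Two standard linear-algebra facts drive the argument: (i) a skew-symmetric bilinear form of rank $2r$ has nonzero $r$-th exterior power and vanishing $(r+1)$-st exterior power; and (ii) if $\eta$ is a nonzero top-degree form on a vector space $V$ and $0\neq B\in V$, then $\iota_B\eta\neq 0$. Note also that $\dim\mathfrak{g}-\ind(\mathfrak{g})=\max_{\psi\in\mathfrak{g}^*}\mathrm{rank}(d\psi)$, and the rank of a skew form is always even; hence $\ind(\mathfrak{g})=1$ forces $\dim\mathfrak{g}=2k+1$ to be odd with $\max_{\psi}\mathrm{rank}(d\psi)=2k$, so that $\psi\in\mathfrak{g}^*$ is regular precisely when $\mathrm{rank}(d\psi)=2k$, i.e. when $\dim\ker(d\psi)=1$.

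The technical heart is the identity
\[
\iota_B\bigl(\varphi\wedge(d\varphi)^k\bigr)=\varphi(B)\,(d\varphi)^k
\qquad\text{for every }B\in\ker(d\varphi).
\]
To obtain it I would use that $\iota_B$ is an antiderivation of degree $-1$: first $\iota_B(\varphi\wedge(d\varphi)^k)=\varphi(B)\,(d\varphi)^k-\varphi\wedge\iota_B\bigl((d\varphi)^k\bigr)$, and then, by induction on $k$ using that $\iota_B d\varphi$ (degree $1$) and $d\varphi$ (degree $2$) commute, $\iota_B\bigl((d\varphi)^k\bigr)=k\,(\iota_B d\varphi)\wedge(d\varphi)^{k-1}$. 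Since $\iota_B d\varphi=d\varphi(B,\cdot)=0$ for $B\in\ker(d\varphi)$, the second term drops out, giving the identity.

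For the forward direction, suppose $\varphi$ is contact, so $\varphi\wedge(d\varphi)^k\neq 0$; in particular $(d\varphi)^k\neq 0$, so by (i) $\mathrm{rank}(d\varphi)\ge 2k$, hence $\mathrm{rank}(d\varphi)=2k$ and $\dim\ker(d\varphi)=1=\ind(\mathfrak{g})$, i.e. $\varphi$ is regular; write $\ker(d\varphi)=\mathrm{span}\{B\}$ with $B\neq0$. Then $\varphi\wedge(d\varphi)^k$ is a nonzero top-degree form on the $(2k+1)$-dimensional space $\mathfrak{g}$, so by (ii) $\iota_B(\varphi\wedge(d\varphi)^k)\neq0$; the identity above identifies this with $\varphi(B)\,(d\varphi)^k$, forcing $\varphi(B)\neq0$. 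Conversely, if $\varphi$ is regular with $\ker(d\varphi)=\mathrm{span}\{B\}$ and $\varphi(B)\neq0$, then regularity gives $\mathrm{rank}(d\varphi)=2k$, so $(d\varphi)^k\neq0$ by (i), and the identity yields $\iota_B(\varphi\wedge(d\varphi)^k)=\varphi(B)\,(d\varphi)^k\neq0$; hence $\varphi\wedge(d\varphi)^k\neq0$ and $\varphi$ is contact.

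I expect the only real work — more bookkeeping than genuine obstacle — to be setting up the contraction calculus on $\bigwedge^\bullet\mathfrak{g}^*$ and justifying (i) and (ii) cleanly; both follow from a Darboux-type basis $e_1^*,\dots,e_{2k}^*,B^*$ in which $d\varphi=\sum_{i=1}^k e_i^*\wedge e_{k+i}^*$ and $\ker(d\varphi)=\mathrm{span}\{B\}$, which in fact makes the endgame fully explicit via $\varphi\wedge(d\varphi)^k=k!\,\varphi(B)\,e_1^*\wedge\cdots\wedge e_{2k}^*\wedge B^*$. One can equally run the entire proof through this normal form without the interior-product identity, but the contraction formulation keeps the computation coordinate-free and short.
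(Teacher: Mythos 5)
Your argument is correct and complete. Note that the paper itself does not prove this lemma --- it is imported verbatim from the cited reference (Lemma 5 of the ``Contact seaweeds'' paper), so there is no in-text proof to compare against; but your contraction identity $\iota_B\bigl(\varphi\wedge(d\varphi)^k\bigr)=\varphi(B)\,(d\varphi)^k$ for $B\in\ker(d\varphi)$, combined with the parity/rank observation that $\ind(\mathfrak{g})=1$ forces $\dim\mathfrak{g}=2k+1$ and makes regularity equivalent to $\operatorname{rank}(d\varphi)=2k$, is exactly the standard route to this equivalence and matches the argument given in that reference in substance.
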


\begin{remark}
For a contact Lie algebra $\mathfrak{g}$, a fixed contact form $\varphi\in\mathfrak{g}^*$, and $B\in\mathfrak{g}$ as in Lemma~\ref{lem:kernel}, the element $\frac{1}{|\varphi(B)|}B\in\mathfrak{g}$ is called the Reeb vector of $\varphi,$ and it is unique with respect to $\varphi.$
\end{remark}

In our investigation of contact, type-A Lie poset algebras, certain Frobenius Lie algebras play a crucial role. A Lie algebra $\mathfrak{g}$ for which $\ind(\mathfrak{g})=0$ is called \textit{Frobenius}, and a regular one-form $\varphi\in\mathfrak{g}^*$ is likewise called \textit{Frobenius}. In what follows, we refer to posets $\mathcal{P}$ for which $\mathfrak{g}_A(\mathcal{P})$ is Frobenius (resp., contact) as \textit{Frobenius} \textup(resp., \textit{contact}\textup) \textit{posets}. 

As it will be needed below (see Definition~\ref{def:toralpair}), we note that to each Frobenius Lie algebra $\mathfrak{g}$ one can associate a sequence of values, called the ``spectrum," as follows. Given a Frobenius Lie algebra $\mathfrak{g}$ and a Frobenius one-form $\varphi\in\mathfrak{g}^*$, the map $\mathfrak{g}\to\mathfrak{g}^*$ defined by $x\mapsto \varphi([x,-])$ is an isomorphism. The inverse image of $\varphi$ under this isomorphism, denoted $\widehat{\varphi}$, is called a \textit{principal element} of $\mathfrak{g}$. In (\textbf{\cite{Ooms}}, 1980), Ooms shows that the eigenvalues (and multiplicities) of $ad(\widehat{\varphi})=[\widehat{\varphi},-]:\mathfrak{g}\to\mathfrak{g}$ do not depend on the choice of principal element $\widehat{\varphi}$ -- see also (\textbf{\cite{Prin}}, 2009). It follows that the spectrum of $ad(\widehat{\varphi})$ is an invariant of $\mathfrak{g}$, which we call the \textit{spectrum} of $\mathfrak{g}.$ When the spectrum of $\mathfrak{g}$ consists of an equal number of 0's and 1's, then $\mathfrak{g}$ is said to have a \textit{binary spectrum}.

Finally, since certain contact and Frobenius one-forms are heavily used in the sections to follow, we end this section by setting definitions and notations related to one-forms on (type-A) Lie poset algebras. Given a finite poset $\mathcal{P}$ and $B=\sum b_{p,q}E_{p,q}\in\mathfrak{g}_A(\mathcal{P})$, define $E_{p,q}^*\in(\mathfrak{g}_A(\mathcal{P}))^*$, for $p,q\in\mathcal{P}$ satisfying $p\preceq q$, by $E^*_{p,q}(B)=b_{p,q}$. From any set $S$ consisting of ordered pairs $(p,q)$ of elements $p,q\in\mathcal{P}$ satisfying $p\prec q$, i.e., $S\subset Rel(\mathcal{P})$, one can construct a one-form $\varphi_S=\sum_{(p,q)\in S}E_{p,q}^*\in(\mathfrak{g}_A(\mathcal{P}))^*$ as well as a directed subgraph $\Gamma_{\varphi_S}(\mathcal{P})$ of the comparability graph of $\mathcal{P}$. In \textbf{\cite{Prin}}, Gerstenhaber and Giaquinto refer to such a one-form as \textit{small} if $\Gamma_{\varphi_S}(\mathcal{P})$ is a spanning subtree of the comparability graph of $\mathcal{P}$. Note that if $\varphi_S\in(\mathfrak{g}_A(\mathcal{P}))^*$ is a small one-form, then $\Gamma_{\varphi_S}(\mathcal{P})$ naturally partitions the elements of $\mathcal{P}$ into the following disjoint subsets: 
\begin{itemize}
    \item $U_{{\varphi}_S}(\mathcal{P})$ consisting of all sinks in $\Gamma_{{\varphi}_S}(\mathcal{P})$, 
    \item $D_{{\varphi}_S}(\mathcal{P})$ consisting of all sources  in $\Gamma_{{\varphi}_S}(\mathcal{P})$, and 
    \item $O_{{\varphi}_S}(\mathcal{P})$ consisting of those vertices which are neither sinks nor sources in $\Gamma_{{\varphi}_S}(\mathcal{P})$.
\end{itemize}

\begin{remark}\label{rem:funct}
Throughout this article, all one-forms $\varphi\in (\mathfrak{g}_A(\mathcal{P}))^*$ can also be viewed as elements of $(\mathfrak{g}(\mathcal{P}))^*$ -- and we have occasion to consider both circumstances. Consequently, we set the following notational convention: we denote the kernel of $d\varphi$, for $\varphi\in (\mathfrak{g}(\mathcal{P}))^*$, by $\ker(d\varphi)$ and we denote the kernel of $d\varphi$, for $\varphi\in (\mathfrak{g}_A(\mathcal{P}))^*$, by $\ker_A(d\varphi)$. Clearly, $\ker_A(d\varphi)=\ker(d\varphi)\cap\mathfrak{g}_A(\mathcal{P})$.
\end{remark}

\section{Toral-pairs}\label{sec:ntp}

 In this section, we define toral-pairs as introduced in \textbf{\cite{Binary}}. These consist of a Frobenius poset $\mathcal{P}$ along with a corresponding Frobenius one-form on $\mathfrak{g}_A(\mathcal{P}).$ The posets of toral-pairs are the ``building blocks" of toral posets under the original definition given in \textbf{\cite{Binary}} and form a subcollection of the ``building blocks" used in the extended definition developed in Section~\ref{sec:contoral} (see Definition~\ref{def:toral}).

\begin{definition}\label{def:toralpair}
Given a Frobenius poset $\mathcal{P}$ and a corresponding Frobenius one-form $\varphi=\varphi_S\in(\mathfrak{g}_A(\mathcal{P}))^*$, for $S\subseteq Rel(\mathcal{P})$, we call $(\mathcal{P},\varphi)$ a toral-pair if $\mathcal{P}$ satisfies
\begin{trivlist}
    \item[\hskip 0.3in \textup{\textbf{(P1)}}] $|Ext(\mathcal{P})|=2$ or 3, and
    \item[\hskip 0.3in \textup{\textbf{(P2)}}] $\mathfrak{g}_A(\mathcal{P})$ has a binary spectrum,
\end{trivlist}
and $\varphi$ satisfies
\begin{trivlist}
    \item[\hskip 0.3in \textup{\textbf{(F1)}}] $\varphi$ is small,
     \item[\hskip 0.3in \textup{\textbf{(F2)}}] $U_{\varphi}(\mathcal{P})$ is a filter of $\mathcal{P}$, $D_{\varphi}(\mathcal{P})$ is an ideal of $\mathcal{P}$, and $O_{\varphi}(\mathcal{P})=\emptyset$,
    \item[\hskip 0.3in \textup{\textbf{(F3)}}] $\Gamma_{\varphi}$ contains all edges between elements of $Ext(\mathcal{P})$, and
    \item[\hskip 0.3in \textup{\textbf{(F4)}}] $B\in\ker(d\varphi)$ satisfies $E^*_{p,p}(B)=E^*_{q,q}(B)$, for all $p,q\in\mathcal{P}$, and $E^*_{p,q}(B)=0$, for all $p,q\in\mathcal{P}$ \\* $~~~~~~~~~~~~$ satisfying $p\prec q$.
\end{trivlist}
\end{definition}

\begin{remark}
In \textup{\textbf{\cite{Binary}}}, the authors include a third property of the poset $\mathcal{P}$, requiring that the poset's corresponding order complex $\Sigma(\mathcal{P})$ be contractible. This property is redundant, though, since requiring $\mathcal{P}$ to satisfy $|Ext(\mathcal{P})|=2$ or 3 forces $\mathcal{P}$ to have a unique minimal or maximal element. Thus, for such a poset, $\Sigma(\mathcal{P})$ is a cone, so $\Sigma(\mathcal{P})$ is contractible.
\end{remark}

\begin{Ex}\label{ex:toralpairs}
The posets illustrated in Figure~\ref{fig:bb} can be paired with an appropriate one-form to yield a toral-pair. See \textup{\textbf{\cite{Binary}}}.
\bigskip

\begin{center}
\fbox{\begin{minipage}{45em} 
\begin{figure}[H]
$$\begin{tikzpicture}[scale=0.7]
\node (v1) at (0,-0.5) [circle, draw = black, fill = black, inner sep = 0.5mm, label=left:{$1$}] {};
\node (v2) at (0,0.5) [circle, draw = black, fill = black, inner sep = 0.5mm, label=left:{$2$}] {};
\draw (v1) -- (v2);
\end{tikzpicture}\begin{tikzpicture}[scale=0.65]
	\node (1) at (0, 0) [circle, draw = black, fill = black, inner sep = 0.5mm, label=left:{$1$}]{};
	\node (2) at (0, 1)[circle, draw = black, fill = black, inner sep = 0.5mm, label=left:{$2$}] {};
	\node (3) at (-0.5, 2) [circle, draw = black, fill = black, inner sep = 0.5mm, label=left:{$3$}] {};
	\node (4) at (0.5, 2) [circle, draw = black, fill = black, inner sep = 0.5mm, label=right:{$4$}] {};
    \draw (1)--(2);
    \draw (2)--(3);
    \draw (2)--(4);
\end{tikzpicture}
\begin{tikzpicture}[scale=0.65]
\node (v1) at (1.5,0) [circle, draw = black, fill = black, inner sep = 0.5mm, label=left:{$1$}] {};
\node (v4) at (2.5,0) [circle, draw = black, fill = black, inner sep = 0.5mm, label=right:{$2$}] {};
\node (v2) at (2,1) [circle, draw = black, fill = black, inner sep = 0.5mm, label=right:{$3$}] {};
\node (v3) at (2,2) [circle, draw = black, fill = black, inner sep = 0.5mm, label=right:{$4$}] {};
\draw (v1) -- (v2) -- (v3);
\draw (v2) -- (v4);
\end{tikzpicture}\begin{tikzpicture}[scale=0.65]
\node (v4) at (-2,3) [circle, draw = black, fill = black, inner sep = 0.5mm, label=left:{$5$}] {};
\node (v6) at (-0.5,3) [circle, draw = black, fill = black, inner sep = 0.5mm, label=right:{$6$}] {};
\node (v5) at (-0.5,2) [circle, draw = black, fill = black, inner sep = 0.5mm, label=right:{$4$}] {};
\node (v3) at (-2,2) [circle, draw = black, fill = black, inner sep = 0.5mm, label=left:{$3$}] {};
\node (v2) at (-1.25,1) [circle, draw = black, fill = black, inner sep = 0.5mm, label=left:{$2$}] {};
\node (v1) at (-1.25,0) [circle, draw = black, fill = black, inner sep = 0.5mm, label=left:{$1$}] {};
\draw (v1) -- (v2) -- (v3) -- (v4);
\draw (v2) -- (v5) -- (v6);
\end{tikzpicture}
\begin{tikzpicture}[scale=0.65]
\node (v7) at (1,0)  [circle, draw = black, fill = black, inner sep = 0.5mm, label=left:{$1$}] {};
\node (v11) at (2.5,0)  [circle, draw = black, fill = black, inner sep = 0.5mm, label=right:{$2$}] {};
\node (v12) at (2.5,1)  [circle, draw = black, fill = black, inner sep = 0.5mm, label=right:{$4$}] {};
\node (v8) at (1,1)  [circle, draw = black, fill = black, inner sep = 0.5mm, label=left:{$3$}] {};
\node (v9) at (1.75,2)  [circle, draw = black, fill = black, inner sep = 0.5mm, label=right:{$5$}] {};
\node (v10) at (1.75,3)  [circle, draw = black, fill = black, inner sep = 0.5mm, label=right:{$6$}] {};
\draw (v7) -- (v8) -- (v9) -- (v10);
\draw (v11) -- (v12) -- (v9);
\end{tikzpicture}$$\medskip $$\begin{tikzpicture}[scale=0.8]
\node [circle, draw = black, fill = black, inner sep = 0.5mm, label=right:{$1$}] (v13) at (9,0.5) {};
\node [circle, draw = black, fill = black, inner sep = 0.5mm, label=right:{$\lfloor\frac{n}{2}\rfloor-1$}] (v15) at (9,1.25) {};
\node [circle, draw = black, fill = black, inner sep = 0.5mm, label=right:{$\lfloor\frac{n}{2}\rfloor$}] (v19) at (9,2) {};
\node [circle, draw = black, fill = black, inner sep = 0.5mm, label=right:{$\lfloor\frac{n}{2}\rfloor+1$}] (v16) at (9,2.75) {};
\node [circle, draw = black, fill = black, inner sep = 0.5mm, label=right:{$n-1$}] (v18) at (9,3.5) {};
\draw (v15) -- (v16) -- cycle;
\node [circle, draw = black, fill = black, inner sep = 0.5mm, label=left:{$n$}] (v20) at (8.5,2.75) {};
\draw (v19) -- (v20);
\node at (9,3.25) {$\vdots$};
\node at (9,1) {$\vdots$};
\end{tikzpicture}\quad\begin{tikzpicture}[scale=0.8]
\node [circle, draw = black, fill = black, inner sep = 0.5mm, label=right:{$1$}] (v13) at (11,0.5) {};
\node [circle, draw = black, fill = black, inner sep = 0.5mm, label=right:{$\lfloor\frac{n-1}{2}\rfloor$}] (v15) at (11,1.25) {};
\node [circle, draw = black, fill = black, inner sep = 0.5mm, label=right:{$\lfloor\frac{n-1}{2}\rfloor+2$}] (v19) at (11,2) {};
\node [circle, draw = black, fill = black, inner sep = 0.5mm, label=right:{$\lfloor\frac{n-1}{2}\rfloor+3$}] (v16) at (11,2.75) {};
\node [circle, draw = black, fill = black, inner sep = 0.5mm, label=right:{$n$}] (v18) at (11,3.5) {};
\draw (v15) -- (v16) -- cycle;
\node [circle, draw = black, fill = black, inner sep = 0.5mm, label=left:{$\lfloor\frac{n-1}{2}\rfloor+1$}] (v20) at (10.5,1.25) {};
\draw (v19) -- (v20);
\node at (11,3.25) {$\vdots$};
\node at (11,1) {$\vdots$};
\end{tikzpicture}\quad
\begin{tikzpicture}
	\node (1) at (0, 0) [circle, draw = black, fill = black, inner sep = 0.5mm, label=left:{$1$}]{};
	\node (2) at (-0.5, 0.5)[circle, draw = black, fill = black, inner sep = 0.5mm, label=left:{$2$}] {};
	\node (3) at (0.5, 0.5) [circle, draw = black, fill = black, inner sep = 0.5mm, label=right:{$3$}] {};
    \node (4) at (-0.5, 1) [circle, draw = black, fill = black, inner sep = 0.5mm, label=left:{$4$}] {};
    \node (5) at (0.5, 1) [circle, draw = black, fill = black, inner sep = 0.5mm, label=right:{$5$}] {};
    \node (6) at (0, 1.5) {$\vdots$};
    \node (7) at (-0.5, 2)[circle, draw = black, fill = black, inner sep = 0.5mm, label=left:{$2n-2$}] {};
	\node (8) at (0.5, 2) [circle, draw = black, fill = black, inner sep = 0.5mm, label=right:{$2n-1$}] {};
    \node (9) at (-0.5, 2.5) [circle, draw = black, fill = black, inner sep = 0.5mm, label=left:{$2n$}] {};
    \node (10) at (0.5, 2.5) [circle, draw = black, fill = black, inner sep = 0.5mm, label=right:{$2n+1$}] {};
    \draw (4)--(3)--(1)--(2)--(5);
    \draw (5)--(3);
    \draw (2)--(4);
    \draw (7)--(9)--(8);
    \draw (7)--(10)--(8);
\end{tikzpicture}\quad\begin{tikzpicture}
	\node (1) at (0, 2.5) [circle, draw = black, fill = black, inner sep = 0.5mm, label=left:{$2n+1$}]{};
	\node (2) at (-0.5, 2)[circle, draw = black, fill = black, inner sep = 0.5mm, label=left:{$2n-1$}] {};
	\node (3) at (0.5, 2) [circle, draw = black, fill = black, inner sep = 0.5mm, label=right:{$2n$}] {};
    \node (4) at (-0.5, 1.5) [circle, draw = black, fill = black, inner sep = 0.5mm, label=left:{$2n-3$}] {};
    \node (5) at (0.5, 1.5) [circle, draw = black, fill = black, inner sep = 0.5mm, label=right:{$2n-2$}] {};
    \node (6) at (0, 1) {$\vdots$};
    \node (7) at (-0.5, 0.5)[circle, draw = black, fill = black, inner sep = 0.5mm, label=left:{$3$}] {};
	\node (8) at (0.5, 0.5) [circle, draw = black, fill = black, inner sep = 0.5mm, label=right:{$4$}] {};
    \node (9) at (-0.5, 0) [circle, draw = black, fill = black, inner sep = 0.5mm, label=left:{$1$}] {};
    \node (10) at (0.5, 0) [circle, draw = black, fill = black, inner sep = 0.5mm, label=right:{$2$}] {};
    \draw (4)--(3)--(1)--(2)--(5);
    \draw (5)--(3);
    \draw (2)--(4);
    \draw (7)--(9)--(8);
    \draw (7)--(10)--(8);
\end{tikzpicture}$$
\caption{Known posets of toral-pairs}\label{fig:bb}
\end{figure}\end{minipage}}
\end{center}
\end{Ex}
\bigskip

Before defining ``contact" toral-pairs in Section~\ref{sec:ctp}, we extend the list of examples of toral-pairs. In particular, Theorem~\ref{thm:ntp} below states that, in addition to the posets of Example~\ref{ex:toralpairs}, the posets illustrated in Figure~\ref{fig:bb2} can also be paired with an appropriate one-form to yield a toral-pair.
\bigskip

\begin{center}
\fbox{\begin{minipage}{45em} 
\begin{figure}[H]
$$\begin{tikzpicture}[scale=0.65]
	\node (1) at (0, 0) [circle, draw = black, fill = black, inner sep = 0.5mm, label=left:{$1$}]{};
	\node (2) at (-1, 1)[circle, draw = black, fill = black, inner sep = 0.5mm, label=left:{$2$}] {};
	\node (3) at (1, 1) [circle, draw = black, fill = black, inner sep = 0.5mm, label=right:{$3$}] {};
	\node (4) at (0, 2) [circle, draw = black, fill = black, inner sep = 0.5mm, label=left:{$4$}] {};
	\node (5) at (-1, 3)[circle, draw = black, fill = black, inner sep = 0.5mm, label=left:{$5$}] {};
	\node (6) at (1, 3) [circle, draw = black, fill = black, inner sep = 0.5mm, label=right:{$6$}] {};
	\node at (0,-0.75) {$\mathcal{P}_1$};
    \draw (1)--(2)--(4)--(5);
    \draw (1)--(3)--(4)--(6);
\end{tikzpicture}\quad\quad
\begin{tikzpicture}[scale=0.65]
	\node (1) at (0, 3) [circle, draw = black, fill = black, inner sep = 0.5mm, label=left:{$6$}]{};
	\node (2) at (-1, 2)[circle, draw = black, fill = black, inner sep = 0.5mm, label=left:{$4$}] {};
	\node (3) at (1, 2) [circle, draw = black, fill = black, inner sep = 0.5mm, label=right:{$5$}] {};
	\node (4) at (0, 1) [circle, draw = black, fill = black, inner sep = 0.5mm, label=left:{$3$}] {};
	\node (5) at (-1, 0)[circle, draw = black, fill = black, inner sep = 0.5mm, label=left:{$1$}] {};
	\node (6) at (1, 0) [circle, draw = black, fill = black, inner sep = 0.5mm, label=right:{$2$}] {};
	\node at (0,-0.75) {$\mathcal{P}^*_1$};
    \draw (1)--(2)--(4)--(5);
    \draw (1)--(3)--(4)--(6);
\end{tikzpicture}\quad\quad\begin{tikzpicture}[scale=0.65]
	\node (1) at (0, 0) [circle, draw = black, fill = black, inner sep = 0.5mm, label=left:{$1$}]{};
	\node (2) at (-1, 1)[circle, draw = black, fill = black, inner sep = 0.5mm, label=left:{$2$}] {};
	\node (3) at (1, 1) [circle, draw = black, fill = black, inner sep = 0.5mm, label=right:{$3$}] {};
	\node (4) at (0, 2) [circle, draw = black, fill = black, inner sep = 0.5mm, label=left:{$4$}] {};
	\node (6) at (2, 2)[circle, draw = black, fill = black, inner sep = 0.5mm, label=left:{$6$}] {};
	\node (5) at (0, 3) [circle, draw = black, fill = black, inner sep = 0.5mm, label=right:{$5$}] {};
	\node at (0,-0.75) {$\mathcal{P}_2$};
    \draw (1)--(2)--(4)--(5);
    \draw (1)--(3)--(4);
    \draw (3)--(6);
\end{tikzpicture}\quad\quad\begin{tikzpicture}[scale=0.65]
	\node (1) at (0, 3) [circle, draw = black, fill = black, inner sep = 0.5mm, label=left:{$6$}]{};
	\node (2) at (-1, 2)[circle, draw = black, fill = black, inner sep = 0.5mm, label=left:{$4$}] {};
	\node (3) at (1, 2) [circle, draw = black, fill = black, inner sep = 0.5mm, label=right:{$5$}] {};
	\node (4) at (0, 1) [circle, draw = black, fill = black, inner sep = 0.5mm, label=left:{$3$}] {};
	\node (6) at (2, 1)[circle, draw = black, fill = black, inner sep = 0.5mm, label=left:{$2$}] {};
	\node (5) at (0, 0) [circle, draw = black, fill = black, inner sep = 0.5mm, label=left:{$1$}] {};
	\node at (0,-0.75) {$\mathcal{P}^*_2$};
    \draw (1)--(2)--(4)--(5);
    \draw (1)--(3)--(4);
    \draw (3)--(6);
\end{tikzpicture}$$\medskip $$\begin{tikzpicture}[scale=0.65]
	\node (1) at (0, 0) [circle, draw = black, fill = black, inner sep = 0.5mm, label=left:{$1$}]{};
	\node (2) at (0, 1)[circle, draw = black, fill = black, inner sep = 0.5mm, label=left:{$2$}] {};
	\node (3) at (-1, 2) [circle, draw = black, fill = black, inner sep = 0.5mm, label=left:{$3$}] {};
	\node (4) at (0.75, 2) [circle, draw = black, fill = black, inner sep = 0.5mm, label=left:{$4$}] {};
	\node (6) at (1.75, 2)[circle, draw = black, fill = black, inner sep = 0.5mm, label=right:{$6$}] {};
	\node (5) at (0, 3) [circle, draw = black, fill = black, inner sep = 0.5mm, label=left:{$5$}] {};
	\node at (0,-0.75) {$\mathcal{P}_3$};
    \draw (1)--(2)--(3)--(5);
    \draw (6)--(2)--(4)--(5);
\end{tikzpicture}\quad\quad\begin{tikzpicture}[scale=0.65]
	\node (1) at (0, 3) [circle, draw = black, fill = black, inner sep = 0.5mm, label=left:{$6$}]{};
	\node (2) at (0, 2)[circle, draw = black, fill = black, inner sep = 0.5mm, label=left:{$5$}] {};
	\node (3) at (-1, 1) [circle, draw = black, fill = black, inner sep = 0.5mm, label=left:{$3$}] {};
	\node (4) at (0.75, 1) [circle, draw = black, fill = black, inner sep = 0.5mm, label=left:{$4$}] {};
	\node (6) at (1.75, 1)[circle, draw = black, fill = black, inner sep = 0.5mm, label=right:{$2$}] {};
	\node (5) at (0, 0) [circle, draw = black, fill = black, inner sep = 0.5mm, label=left:{$1$}] {};
	\node at (0,-0.75) {$\mathcal{P}^*_3$};
    \draw (1)--(2)--(3)--(5);
    \draw (6)--(2)--(4)--(5);
\end{tikzpicture}\quad\quad\begin{tikzpicture}[scale=0.65]
	\node (1) at (0, 0) [circle, draw = black, fill = black, inner sep = 0.5mm, label=left:{$1$}]{};
	\node (2) at (-1, 1)[circle, draw = black, fill = black, inner sep = 0.5mm, label=left:{$2$}] {};
	\node (3) at (1, 1) [circle, draw = black, fill = black, inner sep = 0.5mm, label=right:{$3$}] {};
	\node (4) at (0, 2) [circle, draw = black, fill = black, inner sep = 0.5mm, label=below:{$4$}] {};
	\node (5) at (0, 3)[circle, draw = black, fill = black, inner sep = 0.5mm, label=left:{$5$}] {};
	\node (6) at (0, 4) [circle, draw = black, fill = black, inner sep = 0.5mm, label=left:{$6$}] {};
	\node at (0,-0.75) {$\mathcal{P}_4$};
    \draw (1)--(2)--(5)--(6)--(3)--(1);
    \draw (2)--(4)--(3);
\end{tikzpicture}\quad\quad\begin{tikzpicture}[scale=0.65]
	\node (1) at (0, 4) [circle, draw = black, fill = black, inner sep = 0.5mm, label=left:{$6$}]{};
	\node (2) at (-1, 3)[circle, draw = black, fill = black, inner sep = 0.5mm, label=left:{$4$}] {};
	\node (3) at (1, 3) [circle, draw = black, fill = black, inner sep = 0.5mm, label=right:{$5$}] {};
	\node (4) at (0, 2) [circle, draw = black, fill = black, inner sep = 0.5mm, label=above:{$3$}] {};
	\node (5) at (0, 1)[circle, draw = black, fill = black, inner sep = 0.5mm, label=left:{$2$}] {};
	\node (6) at (0, 0) [circle, draw = black, fill = black, inner sep = 0.5mm, label=left:{$1$}] {};
	\node at (0,-0.75) {$\mathcal{P}^*_4$};
    \draw (1)--(2)--(5)--(6)--(3)--(1);
    \draw (2)--(4)--(3);
\end{tikzpicture}$$
\caption{New posets of toral-pairs}\label{fig:bb2}
\end{figure} \end{minipage}}
\end{center}
\bigskip

\begin{theorem}\label{thm:ntp}
Each of the following pairs, consisting of a poset $\mathcal{P}$ and a one-form $\varphi_{\mathcal{P}}$, constitutes a toral-pair $(\mathcal{P},\varphi_{\mathcal{P}})$.
\begin{enumerate} [label=\textup(\roman*\textup)]
        \item $\mathcal{P}_1=\{1,2,3,4,5,6\}$ with $1\prec 2,3\prec 4\prec 5,6$, and $$\varphi_{\mathcal{P}_1}=E^*_{1,5}+E^*_{1,6}+E^*_{2,4}+E^*_{2,5}+E^*_{3,6},$$
        \item $\mathcal{P}_1^*=\{1,2,3,4,5,6\}$ with $1,2\prec 3\prec 4,5\prec 6$, and $$\varphi_{\mathcal{P}_1^*}=E^*_{1,6}+E^*_{2,6}+E^*_{1,4}+E^*_{3,4}+E^*_{2,5}.$$
        \item $\mathcal{P}_2=\{1,2,3,4,5,6\}$ with $1\prec 2,3\prec 4\prec 5$; $3\prec 6$, and $$\varphi_{\mathcal{P}_2}=E^*_{1,5}+E^*_{1,6}+E^*_{2,4}+E^*_{3,4}+E^*_{3,6},$$
        \item $\mathcal{P}_2^*=\{1,2,3,4,5,6\}$ with $1\prec 3\prec 4,5\prec 6$; $2\prec 5$, and $$\varphi_{\mathcal{P}_2^*}=E^*_{1,6}+E^*_{2,6}+E^*_{2,5}+E^*_{3,4}+E^*_{3,5}.$$
        \item $\mathcal{P}_3=\{1,2,3,4,5,6\}$ with $1\prec 2\prec 3, 4\prec 5$; $2\prec 6$, and $$\varphi_{\mathcal{P}_3}=E^*_{1,5}+E^*_{1,6}+E^*_{2,3}+E^*_{2,4}+E^*_{2,5},$$
        \item $\mathcal{P}_3^*=\{1,2,3,4,5,6\}$ with $1\prec 3, 4\prec 5\prec 6$; $2\prec 5$, and $$\varphi_{\mathcal{P}_3^*}=E^*_{1,6}+E^*_{2,6}+E^*_{1,5}+E^*_{3,5}+E^*_{4,5}.$$
        \item $\mathcal{P}_4=\{1,2,3,4,5,6\}$ with $1\prec 2, 3\prec 4$; $2\prec 5\prec 6$; $3\prec 6$, and $$\varphi_{\mathcal{P}_4}=E^*_{1,4}+E^*_{1,6}+E^*_{2,4}+E^*_{2,5}+E^*_{3,6},$$
        \item $\mathcal{P}_4^*=\{1,2,3,4,5,6\}$ with $1\prec 2\prec 4$; $1\prec 5$; $3\prec 4,5\prec 6$, and $$\varphi_{\mathcal{P}_4^*}=E^*_{1,5}+E^*_{1,6}+E^*_{2,4}+E^*_{3,4}+E^*_{3,6}.$$
\end{enumerate}
\end{theorem}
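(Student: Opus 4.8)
The plan is to verify, for each of the eight pairs $(\mathcal{P},\varphi_{\mathcal{P}})$, the two poset conditions (P1), (P2) and the four one-form conditions (F1)--(F4) from Definition~\ref{def:toralpair}; since the eight posets come in four dual pairs $\mathcal{P}_i, \mathcal{P}_i^*$ (related by order reversal, which corresponds to the transpose anti-automorphism of $\mathfrak{sl}(n)$), it suffices in principle to treat $\mathcal{P}_1,\ldots,\mathcal{P}_4$ and observe that the stated one-forms on the duals are the images of the original one-forms under the duality, so all six conditions transfer. (One should check that order reversal indeed preserves (P1)--(P2) — it does, since $Ext$, the binary-spectrum condition, and the various combinatorial properties are self-dual — and that the listed $\varphi_{\mathcal{P}_i^*}$ matches $\varphi_{\mathcal{P}_i}$ under the relabeling given in the figures.)

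For each of the four representative posets, I would proceed as follows. First, (P1) is immediate by inspection: each poset has either a unique minimal or a unique maximal element, so $|Ext(\mathcal{P})|\in\{2,3\}$. Next, for (F1) I would draw $\Gamma_{\varphi_{\mathcal{P}}}(\mathcal{P})$ — the directed graph on the six vertices with an edge $p\to q$ for each $E^*_{p,q}$ appearing in $\varphi_{\mathcal{P}}$ — and check it is a spanning tree of the comparability graph: five edges on six vertices, connected, hence a tree, and each edge corresponds to an actual relation in $Rel(\mathcal{P})$. Having the tree in hand, I read off $U_{\varphi_{\mathcal{P}}}$ (sinks), $D_{\varphi_{\mathcal{P}}}$ (sources), and $O_{\varphi_{\mathcal{P}}}$ (the rest), and verify (F2): that $O_{\varphi_{\mathcal{P}}}=\emptyset$, that $U_{\varphi_{\mathcal{P}}}$ is a filter, and $D_{\varphi_{\mathcal{P}}}$ an ideal. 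For (F3) I check that every strict relation between two elements of $Ext(\mathcal{P})$ is one of the five edges of the tree.

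The substantive computational content is (P2) together with (F4), and these can be handled jointly. The standard approach: compute $\ker(d\varphi_{\mathcal{P}})$ inside $\mathfrak{g}(\mathcal{P})$ — a linear algebra problem, since $B=\sum_{p\preceq q} b_{p,q}E_{p,q}\in\ker(d\varphi_{\mathcal{P}})$ iff $\varphi_{\mathcal{P}}([B,E_{r,s}])=0$ for all $r\preceq s$, a linear system in the $b_{p,q}$. One expects (using (F1)--(F3)) that this forces all off-diagonal $b_{p,q}=0$ and all diagonal entries equal, so $\ker(d\varphi_{\mathcal{P}})=\mathrm{span}\{E_{1,1}+\cdots+E_{6,6}\}$; intersecting with $\mathfrak{g}_A(\mathcal{P})$ gives $\ker_A(d\varphi_{\mathcal{P}})=0$, so $\varphi_{\mathcal{P}}$ is Frobenius, establishing that $\mathcal{P}$ is a Frobenius poset, and simultaneously (F4) is exactly the statement that $B$ has this form. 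Finally, for the binary-spectrum half of (P2), I would exhibit a principal element $\widehat{\varphi_{\mathcal{P}}}$: solve $\varphi_{\mathcal{P}}([\widehat{\varphi_{\mathcal{P}}},-])=\varphi_{\mathcal{P}}$ for a diagonal (toral) $\widehat{\varphi_{\mathcal{P}}}=\sum d_p E_{p,p}$, which amounts to the linear conditions $d_q-d_p=1$ for each edge $p\to q$ of the tree; since the tree is spanning, this pins down the $d_p$ up to the overall shift, and then $ad(\widehat{\varphi_{\mathcal{P}}})$ acts on $E_{p,q}$ by the scalar $d_q-d_p$, so the spectrum consists of the values $d_q - d_p$ over all relations $p \prec q$, plus a $0$ of multiplicity six on the diagonal; one checks by direct tabulation that the nonzero eigenvalues are all $1$ and that the count of $1$'s equals the count of $0$'s (equivalently $|Rel(\mathcal{P})| = \dim\mathfrak{g}_A(\mathcal{P})/2$). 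The main obstacle is purely bookkeeping: carrying out the eigenvalue tabulation $d_q - d_p$ over all (not just covering) relations for each of the four posets and confirming each equals $0$ or $1$ with balanced multiplicities — tedious but routine, and entirely parallel to the verifications already done for the building blocks of Figure~\ref{fig:bb} in \textbf{\cite{Binary}}.
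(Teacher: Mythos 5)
Your overall strategy matches the paper's: reduce to one poset from each dual pair (the paper likewise proves each part (i) and dismisses (ii) ``via a symmetric argument''), verify \textbf{(P1)} and \textbf{(F1)}--\textbf{(F3)} by inspection, and then extract both \textbf{(F4)} and the Frobenius property from an explicit computation of $\ker(d\varphi_{\mathcal{P}})$ inside $\mathfrak{g}(\mathcal{P})$ followed by the trace-zero condition. The one place you genuinely diverge is \textbf{(P2)}: you propose to exhibit a diagonal principal element by solving $d_p-d_q=1$ along the five tree edges and then tabulate $d_p-d_q$ over all of $Rel(\mathcal{P})$, whereas the paper sidesteps this entirely by citing Theorem 2 of \textbf{\cite{Binary}} together with Theorem 3.3(1) of \textbf{\cite{Ooms}}, which deliver the binary spectrum automatically once $\varphi$ is known to be Frobenius and to satisfy \textbf{(F1)}--\textbf{(F3)}. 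Your route is more self-contained but costs an extra tabulation per poset; the paper's is shorter but leans on the general machinery of \textbf{\cite{Binary}}. Two small corrections to your sketch: it is not true that the nonzero eigenvalues are all $1$ and the zeros all sit on the diagonal --- e.g.\ for $\mathcal{P}_1$ one gets $d=(1,1,1,0,0,0)$, so the relations $1\prec 2$, $1\prec 3$, $4\prec 5$, $4\prec 6$ contribute off-diagonal eigenvalue $0$ (your final sentence, which only asks that every $d_p-d_q$ be $0$ or $1$ with balanced multiplicities, is the correct test); and within $\mathfrak{g}_A(\mathcal{P})$ the diagonal contributes five zeros, not six. Neither slip affects the validity of the method.
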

\begin{proof}
Appendix A.
\end{proof}

\section{Contact toral-pairs}\label{sec:ctp}

In this section, we introduce the contact analogues of toral-pairs, which we call ``contact toral-pairs." The posets of ``contact toral-pairs" will serve as the new ``building blocks" in our extended definition of toral poset given in Section~\ref{sec:contoral}.


\begin{definition}\label{def:contacttoral}
Given a contact poset $\mathcal{P}$ and a corresponding contact one-form $\varphi=\varphi_S\in(\mathfrak{g}_A(\mathcal{P}))^*,$ for $S\subseteq Rel(\mathcal{P})\cup\{(p,p)~|~p\in\mathcal{P}\}$, we call $(\mathcal{P},\varphi)$ a contact toral-pair if $\mathcal{P}$ satisfies

\begin{trivlist}
    \item[\hskip 0.3in \textup{\textbf{(CP1)}}] $\mathcal{P}$ is connected, and
    \item[\hskip 0.3in \textup{\textbf{(CP2)}}] $|Ext(\mathcal{P})|=2$ or $3,$
\end{trivlist}

\noindent and $\varphi$ satisfies

\begin{trivlist}
    \item[\hskip 0.3in \textup{\textbf{(CF1)}}] $E_{p,p}^*$ is a nonzero summand of $\varphi$ if and only if $p=1,$
    \item[\hskip 0.3in \textup{\textbf{(CF2)}}] $\varphi-E_{1,1}^*$ is small,
    \item[\hskip 0.3in \textup{\textbf{(CF3)}}]  $U_{\varphi-E_{1,1}^*}(\mathcal{P})$ is a filter of $\mathcal{P},$ $D_{\varphi-E_{1,1}^*}(\mathcal{P})$ is an order ideal of $\mathcal{P},$ and $O_{\varphi-E_{1,1}^*}(\mathcal{P})=\emptyset$, and
    \item[\hskip 0.3in \textup{\textbf{(CF4)}}] $\Gamma_{\varphi-E_{1,1}^*}$ contains all edges between elements of $Ext(\mathcal{P})$.
\end{trivlist}
\end{definition}

\begin{Ex}\label{ex:contacttoralpairs}
The posets illustrated in Figure~\ref{fig:ctp} can be paired with an appropriate one-form to yield a contact toral-pair \textup(see Theorems~\ref{thm:2chain},~\ref{thm:3chain}, and~\ref{thm:fork}\textup).
\bigskip

\begin{center}
\fbox{\begin{minipage}{45em} 
\begin{figure}[H]
$$\begin{tikzpicture}[scale=0.7]
\node (v1) at (0,-0.5) [circle, draw = black, fill = black, inner sep = 0.5mm, label=left:{$1$}] {};
\node (v2) at (0,0.5) [circle, draw = black, fill = black, inner sep = 0.5mm, label=left:{$2$}] {};
\node (v3) at (0,1.5) [circle, draw = black, fill = black, inner sep = 0.5mm, label=left:{$3$}] {};
\draw (v1) -- (v2)--(v3);
\node at (0,-1.5) {$\mathcal{P}_1$};
\end{tikzpicture}\quad\begin{tikzpicture}[scale=0.7]
\node (v1) at (0,-0.5) [circle, draw = black, fill = black, inner sep = 0.5mm, label=left:{$1$}] {};
\node (v2) at (0,0.5) [circle, draw = black, fill = black, inner sep = 0.5mm, label=left:{$2$}] {};
\node (v3) at (0,1.5) [circle, draw = black, fill = black, inner sep = 0.5mm, label=left:{$3$}] {};
\node (v4) at (0,2.5) [circle, draw = black, fill = black, inner sep = 0.5mm, label=left:{$4$}] {};
\draw (v1) -- (v2)--(v3)--(v4);
\node at (0,-1.5) {$\mathcal{P}_2$};
\end{tikzpicture}\quad\begin{tikzpicture}[scale=0.7]
\node (v1) at (0,-0.5) [circle, draw = black, fill = black, inner sep = 0.5mm, label=left:{$1$}] {};
\node (v2) at (0,0.5) [circle, draw = black, fill = black, inner sep = 0.5mm, label=left:{$2$}] {};
\node (v3) at (0,1.5) [circle, draw = black, fill = black, inner sep = 0.5mm, label=left:{$3$}] {};
\node (v4) at (-0.5,2.5) [circle, draw = black, fill = black, inner sep = 0.5mm, label=left:{$4$}] {};
\node (v5) at (0.5,2.5) [circle, draw = black, fill = black, inner sep = 0.5mm, label=right:{$5$}] {};
\draw (v1)--(v2)--(v3)--(v4);
\draw (v3)--(v5);
\node at (0,-1.5) {$\mathcal{P}_3$};
\end{tikzpicture}\quad\begin{tikzpicture}[scale=0.7]
\node (v1) at (0,2.5) [circle, draw = black, fill = black, inner sep = 0.5mm, label=left:{$5$}] {};
\node (v2) at (0,1.5) [circle, draw = black, fill = black, inner sep = 0.5mm, label=left:{$4$}] {};
\node (v3) at (0,0.5) [circle, draw = black, fill = black, inner sep = 0.5mm, label=left:{$3$}] {};
\node (v4) at (-0.5,-0.5) [circle, draw = black, fill = black, inner sep = 0.5mm, label=left:{$1$}] {};
\node (v5) at (0.5,-0.5) [circle, draw = black, fill = black, inner sep = 0.5mm, label=right:{$2$}] {};
\draw (v1)--(v2)--(v3)--(v4);
\draw (v3)--(v5);
\node at (0,-1.5) {$\mathcal{P}^*_3$};
\end{tikzpicture}$$

$$\begin{tikzpicture}[scale=0.8]
\node [circle, draw = black, fill = black, inner sep = 0.5mm, label=right:{$1$}] (v13) at (9,0.5) {};
\node [circle, draw = black, fill = black, inner sep = 0.5mm, label=right:{$\lfloor\frac{n}{2}\rfloor-1$}] (v15) at (9,1.25) {};
\node [circle, draw = black, fill = black, inner sep = 0.5mm, label=right:{$\lfloor\frac{n}{2}\rfloor$}] (v19) at (9,2) {};
\node [circle, draw = black, fill = black, inner sep = 0.5mm, label=right:{$\lfloor\frac{n}{2}\rfloor+1$}] (v16) at (9,2.75) {};
\node [circle, draw = black, fill = black, inner sep = 0.5mm, label=right:{$n-1$}] (v18) at (9,3.5) {};
\draw (v15) -- (v16) -- cycle;
\node [circle, draw = black, fill = black, inner sep = 0.5mm, label=left:{$n$}] (v20) at (8.5,3.5) {};
\draw (v16) -- (v20);
\node at (9,3.25) {$\vdots$};
\node at (9,1) {$\vdots$};
\node at (9,-0.5) {$\mathcal{P}_{4,n}$};
\end{tikzpicture}\quad\begin{tikzpicture}[scale=0.8]
\node [circle, draw = black, fill = black, inner sep = 0.5mm, label=right:{$1$}] (v13) at (11,0.5) {};
\node [circle, draw = black, fill = black, inner sep = 0.5mm, label=right:{$\lfloor\frac{n-1}{2}\rfloor$}] (v15) at (11,1.25) {};
\node [circle, draw = black, fill = black, inner sep = 0.5mm, label=right:{$\lfloor\frac{n-1}{2}\rfloor+2$}] (v19) at (11,2) {};
\node [circle, draw = black, fill = black, inner sep = 0.5mm, label=right:{$\lfloor\frac{n-1}{2}\rfloor+3$}] (v16) at (11,2.75) {};
\node [circle, draw = black, fill = black, inner sep = 0.5mm, label=right:{$n$}] (v18) at (11,3.5) {};
\draw (v15) -- (v16) -- cycle;
\node [circle, draw = black, fill = black, inner sep = 0.5mm, label=left:{$\lfloor\frac{n-1}{2}\rfloor+1$}] (v20) at (10.5,0.5) {};
\draw (v15) -- (v20);
\node at (11,3.25) {$\vdots$};
\node at (11,1) {$\vdots$};
\node at (11,-0.5) {$\mathcal{P}^*_{4,n}$};
\end{tikzpicture}\quad\begin{tikzpicture}[scale=0.8]
\node [circle, draw = black, fill = black, inner sep = 0.5mm, label=right:{$1$}] (v13) at (9,0.5) {};
\node [circle, draw = black, fill = black, inner sep = 0.5mm, label=right:{$\lfloor\frac{n}{2}\rfloor-1$}] (v15) at (9,1.25) {};
\node [circle, draw = black, fill = black, inner sep = 0.5mm, label=right:{$\lfloor\frac{n}{2}\rfloor$}] (v19) at (9,2) {};
\node [circle, draw = black, fill = black, inner sep = 0.5mm, label=right:{$\lfloor\frac{n}{2}\rfloor+1$}] (v16) at (9,2.75) {};
\node [circle, draw = black, fill = black, inner sep = 0.5mm, label=right:{$n-1$}] (v18) at (9,3.5) {};
\draw (v15) -- (v16) -- cycle;
\node [circle, draw = black, fill = black, inner sep = 0.5mm, label=left:{$n$}] (v20) at (8.5,2) {};
\draw (v15) -- (v20);
\node at (9,3.25) {$\vdots$};
\node at (9,1) {$\vdots$};
\node at (9,-0.5) {$\mathcal{P}_{5,n}$};
\end{tikzpicture}\quad\begin{tikzpicture}[scale=0.8]
\node [circle, draw = black, fill = black, inner sep = 0.5mm, label=right:{$1$}] (v13) at (11,0.5) {};
\node [circle, draw = black, fill = black, inner sep = 0.5mm, label=right:{$\lfloor\frac{n-1}{2}\rfloor$}] (v15) at (11,1.25) {};
\node [circle, draw = black, fill = black, inner sep = 0.5mm, label=right:{$\lfloor\frac{n-1}{2}\rfloor+2$}] (v19) at (11,2) {};
\node [circle, draw = black, fill = black, inner sep = 0.5mm, label=right:{$\lfloor\frac{n-1}{2}\rfloor+3$}] (v16) at (11,2.75) {};
\node [circle, draw = black, fill = black, inner sep = 0.5mm, label=right:{$n$}] (v18) at (11,3.5) {};
\draw (v15) -- (v16) -- cycle;
\node [circle, draw = black, fill = black, inner sep = 0.5mm, label=left:{$\lfloor\frac{n-1}{2}\rfloor+1$}] (v20) at (10.5,2) {};
\draw (v16) -- (v20);
\node at (11,3.25) {$\vdots$};
\node at (11,1) {$\vdots$};
\node at (11,-0.5) {$\mathcal{P}^*_{5,n}$};
\end{tikzpicture}$$
\caption{Posets of contact toral-pairs}\label{fig:ctp}
\end{figure}
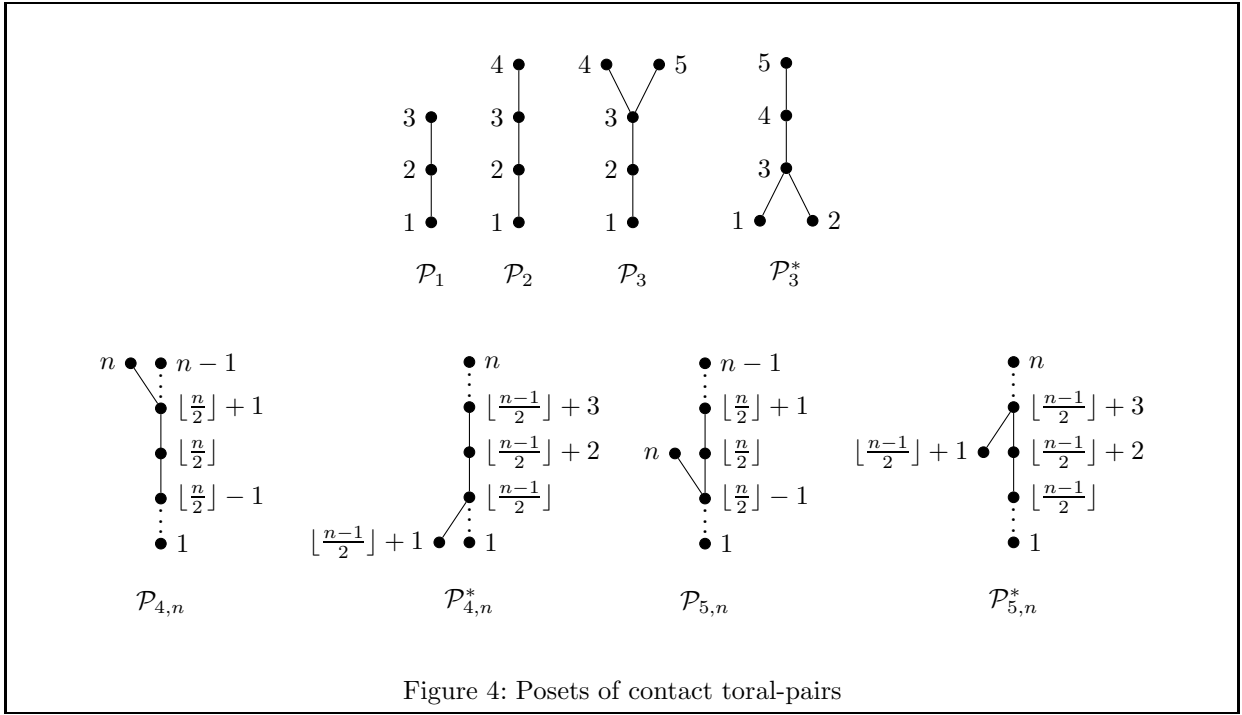 \end{minipage}}
\end{center}
\end{Ex}
\bigskip

\begin{remark}
By Theorem 17 of \textup{\textbf{\cite{ContactLiePoset}}}, all contact posets $\mathcal{P}$ of height zero or one are disconnected; hence, there are no contact toral-pairs $(\mathcal{P},\varphi)$ with $\mathcal{P}$ having height zero or one.
\end{remark}

\begin{theorem}\label{thm:2chain}
If $\mathcal{P}_1=\{1,2,3\}$ with $1\prec 2\prec 3$ and $\varphi_{\mathcal{P}_1}=E_{1,1}^*+E_{1,3}^*+E_{2,3}^*,$ then $(\mathcal{P}_1,\varphi_{\mathcal{P}_1})$ constitutes a contact toral-pair.
\end{theorem}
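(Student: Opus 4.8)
The plan is to verify Definition~\ref{def:contacttoral} directly for the small poset $\mathcal{P}_1 = \{1,2,3\}$ with $1\prec 2\prec 3$, so the work splits into two halves: first showing that $\mathfrak{g}_A(\mathcal{P}_1)$ is contact with contact form $\varphi_{\mathcal{P}_1}=E_{1,1}^*+E_{1,3}^*+E_{2,3}^*$, and second checking the four combinatorial conditions \textbf{(CF1)}--\textbf{(CF4)} together with \textbf{(CP1)}--\textbf{(CP2)}. The conditions \textbf{(CP1)} and \textbf{(CP2)} are immediate: the Hasse diagram is a single chain, hence connected, and $Ext(\mathcal{P}_1)=\{1,3\}$ has cardinality $2$. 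For the form-side conditions, write $\psi := \varphi_{\mathcal{P}_1}-E_{1,1}^* = E_{1,3}^*+E_{2,3}^*$; then $S=\{(1,3),(2,3)\}\subset Rel(\mathcal{P}_1)$, and the associated directed graph $\Gamma_\psi(\mathcal{P}_1)$ has edges $1\to 3$ and $2\to 3$, which is a spanning tree of the comparability graph of $\mathcal{P}_1$ (whose edges are $\{1,2\},\{1,3\},\{2,3\}$), so $\psi$ is small, giving \textbf{(CF2)}; here $U_\psi=\{3\}$, a filter, $D_\psi=\{1,2\}$, an order ideal, and $O_\psi=\emptyset$, giving \textbf{(CF3)}; the only edge between elements of $Ext(\mathcal{P}_1)=\{1,3\}$ is $\{1,3\}$, which is in $\Gamma_\psi$, giving \textbf{(CF4)}; and \textbf{(CF1)} holds since $E_{1,1}^*$ is the unique diagonal summand appearing.

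For the contact half, I would work directly with $\mathfrak{g}_A(\mathcal{P}_1)$, which has basis $\{E_{1,1}-E_{2,2},\,E_{2,2}-E_{3,3},\,E_{1,2},\,E_{1,3},\,E_{2,3}\}$ and is therefore $5$-dimensional, so $k=2$ and I must exhibit $\varphi\wedge(d\varphi)^2\neq 0$, equivalently (by Lemma~\ref{lem:kernel}, once index one is established) that $\varphi_{\mathcal{P}_1}$ is regular with $\ker_A(d\varphi_{\mathcal{P}_1})=\mathrm{span}\{B\}$ for some $B$ with $\varphi_{\mathcal{P}_1}(B)\neq 0$. The cleanest route is to compute the bracket relations on the basis — the nonzero ones are $[E_{1,1}-E_{2,2},E_{1,2}]=E_{1,2}$, $[E_{1,1}-E_{2,2},E_{1,3}]=E_{1,3}$, $[E_{2,2}-E_{3,3},E_{1,3}]=-E_{1,3}$ (wait, more carefully: $[E_{2,2}-E_{3,3},E_{2,3}]=2E_{2,3}$ or similar), $[E_{1,2},E_{2,3}]=E_{1,3}$ — and then write down the $5\times 5$ Gram matrix of $d\varphi_{\mathcal{P}_1}$ in this basis and show it has rank $4$, with kernel spanned by a vector $B$ on which $\varphi_{\mathcal{P}_1}$ does not vanish. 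I expect $B$ to be (a multiple of) the toral element $E_{1,1}+E_{2,2}+E_{3,3}$ projected to trace zero, adjusted so that $\varphi_{\mathcal{P}_1}(B)\neq 0$; the presence of the $E_{1,1}^*$ summand is precisely what makes $\varphi(B)\neq 0$, in contrast to the Frobenius toral-pairs where \textbf{(F4)} forces $\varphi(B)=0$.

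Alternatively, and perhaps more in the spirit of the paper, I would invoke the structure of the ambient Frobenius object: the $2$-chain $\{1,2\}$ (building block of a toral-pair) and its "contactization" — the extra relation $2\prec 3$ together with the diagonal term $E_{1,1}^*$ — should realize $(\mathcal{P}_1,\varphi_{\mathcal{P}_1})$ via the combinatorial contactization alluded to in Section~\ref{sec:epilogue}; but since this is the base case establishing a building block, the honest finite computation above is the safest. The main obstacle is simply the bookkeeping in the rank computation of $d\varphi_{\mathcal{P}_1}$ and correctly identifying the kernel element $B$ and checking $\varphi_{\mathcal{P}_1}(B)\neq 0$; once the kernel is pinned down, Lemma~\ref{lem:kernel} does the rest, and the combinatorial conditions are essentially by inspection.
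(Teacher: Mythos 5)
Your verification of \textbf{(CP1)}, \textbf{(CP2)}, and \textbf{(CF1)}--\textbf{(CF4)} is exactly what the paper does, by inspection. Where you diverge is the contact half: the paper simply cites the proof of Theorem 28 of \textbf{\cite{ContactLiePoset}} for the fact that $\varphi_{\mathcal{P}_1}$ is a contact form, whereas you propose the direct kernel computation plus Lemma~\ref{lem:kernel}. That route is sound and is precisely the method the paper itself uses for the analogous Theorems~\ref{thm:3chain} and~\ref{thm:fork}, so it buys a self-contained proof at the cost of a small amount of bookkeeping. Carrying it out: the conditions $\varphi([E_{p,p},B])=0$ and $\varphi([E_{1,2},B])=0$ force $k_{1,3}=k_{2,3}=0$; the conditions $\varphi([E_{1,3},B])=k_{3,3}-k_{1,1}=0$ and $\varphi([E_{2,3},B])=k_{3,3}-k_{2,2}-k_{1,2}=0$ together with the trace condition give $k_{2,2}=-2k_{1,1}$ and $k_{1,2}=3k_{1,1}$, so $\ker_A(d\varphi_{\mathcal{P}_1})=\mathrm{span}\{E_{1,1}-2E_{2,2}+E_{3,3}+3E_{1,2}\}$ and $\varphi_{\mathcal{P}_1}(B)=E_{1,1}^*(B)\neq 0$. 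Note that your guess for $B$ (the identity projected to trace zero) is wrong --- that projection is zero, and the actual kernel element has unequal diagonal entries and a nontrivial $E_{1,2}$ component --- but since you commit to computing the kernel rather than relying on the guess, this does not affect the validity of the argument. Your closing observation that the $E_{1,1}^*$ summand is what makes $\varphi(B)\neq 0$ is correct and is exactly the content of Corollary~\ref{cor:e11}.
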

\begin{proof}
Let $\mathcal{P}=\mathcal{P}_1$ and $\varphi=\varphi_{\mathcal{P}_1}.$ Clearly, $\mathcal{P}$ is connected and $|Ext(\mathcal{P})|=2,$ so \textbf{(CP1)} and \textbf{(CP2)} are satisfied. Further, $E_{p,p}^*$ is a nonzero summand of $\varphi$ precisely when $p=1,$ $\varphi-E_{1,1}^*$ is clearly small, $U_{\varphi-E_{1,1}^*}(\mathcal{P})=\{3\}$ is a filter of $\mathcal{P},$ $D_{\varphi-E_{1,1}^*}(\mathcal{P})=\{1,2\}$ is an ideal of $\mathcal{P},$ $O_{\varphi-E_{1,1}^*}(\mathcal{P})=\emptyset,$ and $\Gamma_{\varphi-E_{1,1}^*}$ contains the edge $(1,3),$ which is the only edge between elements of $Ext(\mathcal{P}).$ Therefore, all that remains to show is that $\varphi$ is a contact form, and this fact follows from the proof of Theorem 28 in \textbf{\cite{ContactLiePoset}}.
\end{proof}

\begin{theorem}\label{thm:3chain}
If $\mathcal{P}_2=\{1,2,3,4\}$ with $1\prec 2\prec 3\prec 4$ and $\varphi_{\mathcal{P}_2}=E_{1,1}^*+E_{1,4}^*+E_{2,3}^*+E_{2,4}^*,$ then $(\mathcal{P}_2,\varphi_{\mathcal{P}_2})$ constitutes a contact toral-pair.
\end{theorem}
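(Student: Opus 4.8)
The plan is to verify Theorem~\ref{thm:3chain} by checking each of the six defining conditions of a contact toral-pair for the pair $(\mathcal{P}_2,\varphi_{\mathcal{P}_2})$, exactly parallel to the proof of Theorem~\ref{thm:2chain}. First I would dispatch the poset conditions: $\mathcal{P}_2$ is the chain $1\prec 2\prec 3\prec 4$, so its Hasse diagram is a path, hence connected, giving \textbf{(CP1)}, and $Ext(\mathcal{P}_2)=\{1,4\}$ has cardinality two, giving \textbf{(CP2)}. Next I would handle the one-form conditions that are purely combinatorial. Writing $\psi=\varphi_{\mathcal{P}_2}-E_{1,1}^*=E_{1,4}^*+E_{2,3}^*+E_{2,4}^*$, the set $S=\{(1,4),(2,3),(2,4)\}\subset Rel(\mathcal{P}_2)$, so $E_{p,p}^*$ appears in $\varphi_{\mathcal{P}_2}$ iff $p=1$, which is \textbf{(CF1)}. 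For \textbf{(CF2)} I would draw $\Gamma_{\psi}(\mathcal{P}_2)$ on vertex set $\{1,2,3,4\}$ with edges $\{1,4\},\{2,3\},\{2,4\}$ and observe it is a tree on all four vertices, hence a spanning subtree of the comparability graph, so $\psi$ is small. Orienting each edge from smaller to larger element, vertex $4$ is the unique sink, vertices $1$ and $2$ are sources, and vertex $3$ is a sink of the edge $\{2,3\}$; so $U_{\psi}(\mathcal{P}_2)=\{3,4\}$, $D_{\psi}(\mathcal{P}_2)=\{1,2\}$, $O_{\psi}(\mathcal{P}_2)=\emptyset$. Here $\{3,4\}$ is a filter and $\{1,2\}$ is an order ideal in the chain, giving \textbf{(CF3)}, and the only edge of the comparability graph between elements of $Ext(\mathcal{P}_2)=\{1,4\}$ is $\{1,4\}$, which lies in $\Gamma_{\psi}$, giving \textbf{(CF4)}.

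The remaining and genuinely substantive task is to show that $\varphi_{\mathcal{P}_2}$ is a contact form on $\mathfrak{g}_A(\mathcal{P}_2)$, and this is where I expect the real work to lie. The algebra $\mathfrak{g}_A(\mathcal{P}_2)$ is nine-dimensional (it has basis $E_{1,2}, E_{1,3}, E_{1,4}, E_{2,3}, E_{2,4}, E_{3,4}$ together with three independent diagonal elements), so $\dim\mathfrak{g}_A(\mathcal{P}_2)=9=2\cdot4+1$, which is consistent with being contact with $k=4$. By Lemma~\ref{lem:kernel}, it suffices to show that $\ind(\mathfrak{g}_A(\mathcal{P}_2))=1$, that $\varphi_{\mathcal{P}_2}$ is regular, and that there is $B$ with $\ker_A(d\varphi_{\mathcal{P}_2})=\operatorname{span}\{B\}$ and $\varphi_{\mathcal{P}_2}(B)\neq0$. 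The cleanest route — and the one suggested by the phrasing of the Theorem~\ref{thm:2chain} proof — is to invoke the corresponding computation from \textbf{\cite{ContactLiePoset}}: the chain $1\prec2\prec3\prec4$ has all chains of cardinality at most four, but the relevant contact characterization there is stated for chains of cardinality at most three, so I would instead give the direct computation. I would compute the matrix of $d\varphi_{\mathcal{P}_2}$ in the above basis, solve the linear system $d\varphi_{\mathcal{P}_2}(B,-)=0$ for $B=\sum b_{p,q}E_{p,q}$ (with the trace-zero constraint), and exhibit that the solution space is one-dimensional, spanned by a $B$ on which $\varphi_{\mathcal{P}_2}$ evaluates nonzero — using $E_{1,4}^*(B)+E_{2,3}^*(B)+E_{2,4}^*(B)+E_{1,1}^*(B)\ne0$.

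The main obstacle is thus the explicit kernel computation: one must track the structure constants $[E_{p,q},E_{r,s}]=\delta_{q,r}E_{p,s}-\delta_{s,p}E_{r,q}$ restricted to comparable pairs in the chain, write out $d\varphi_{\mathcal{P}_2}(x,y)=-\varphi_{\mathcal{P}_2}([x,y])$ on all pairs of basis elements, and show the resulting antisymmetric form on the $9$-dimensional space has rank exactly $8$ with a one-dimensional radical meeting the hyperplane $\{\varphi_{\mathcal{P}_2}\ne0\}^{c}$ trivially. I anticipate that the kernel element $B$ will be (a scalar multiple of) $\widehat{\varphi}$-like, with nonzero diagonal part and controlled off-diagonal part forced by the equations coming from bracketing with $E_{2,3}$, $E_{1,4}$, and $E_{3,4}$; in particular the bracket relations involving $E_{3,4}$ and $E_{1,2}$ (the relations \emph{not} appearing as summands of $\psi$) will pin down the off-diagonal coordinates of $B$, and the condition $\varphi_{\mathcal{P}_2}(B)\ne0$ will reduce to checking that the $(1,1)$-diagonal coordinate of $B$ is nonzero, which it must be since otherwise $B$ would lie in a strictly smaller subalgebra with larger index. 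Once the one-dimensionality and the nonvanishing of $\varphi_{\mathcal{P}_2}(B)$ are in hand, Lemma~\ref{lem:kernel} immediately yields that $\varphi_{\mathcal{P}_2}$ is contact, completing the verification that $(\mathcal{P}_2,\varphi_{\mathcal{P}_2})$ is a contact toral-pair.
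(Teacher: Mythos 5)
Your proposal follows essentially the same route as the paper: verify \textbf{(CP1)}--\textbf{(CP2)} and \textbf{(CF1)}--\textbf{(CF4)} directly (your identification of $U_{\varphi-E_{1,1}^*}=\{3,4\}$, $D_{\varphi-E_{1,1}^*}=\{1,2\}$, $O_{\varphi-E_{1,1}^*}=\emptyset$ matches the paper's), then compute $\ker(d\varphi_{\mathcal{P}_2})$ explicitly and invoke Lemma~\ref{lem:kernel}; the paper carries out exactly the linear algebra you outline and finds $\ker_A(d\varphi_{\mathcal{P}_2})$ spanned by $E_{1,1}-E_{2,2}-E_{3,3}+E_{4,4}+2E_{1,2}$, on which $\varphi_{\mathcal{P}_2}$ evaluates to a nonzero scalar. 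The one caution is that your proposed shortcut for $\varphi_{\mathcal{P}_2}(B)\neq 0$ (that $E_{1,1}^*(B)=0$ would force $B$ into ``a strictly smaller subalgebra with larger index'') is not a rigorous argument and should simply be replaced by the explicit kernel computation you already plan to perform, which shows $\varphi_{\mathcal{P}_2}(B)=E_{1,1}^*(B)=k_{1,1}\neq 0$ for any nonzero kernel element.
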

\begin{proof}
Let $\mathcal{P}=\mathcal{P}_2$ and $\varphi=\varphi_{\mathcal{P}_2}.$ Clearly, $\mathcal{P}$ is connected and $|Ext(\mathcal{P})|=2$, so \textbf{(CP1)} and \textbf{(CP2)} are satisfied. Further, $E_{p,p}^*$ is a nonzero summand of $\varphi$ precisely when $p=1$, $\varphi-E_{1,1}^*$ is clearly small, $U_{\varphi-E_{1,1}^*}(\mathcal{P})=\{3,4\}$ is a filter of $\mathcal{P},$ $D_{\varphi-E_{1,1}^*}(\mathcal{P})=\{1,2\}$ is an ideal of $\mathcal{P},$ $O_{\varphi-E_{1,1}^*}(\mathcal{P})=\emptyset,$ and $\Gamma_{\varphi-E_{1,1}^*}$ contains the edge $(1,4),$ which is the only edge between elements of $Ext(\mathcal{P}).$ Therefore, all that remains to show is that $\varphi$ is a contact form. We proceed by explicitly computing the kernel of $d\varphi$ and then applying Lemma~\ref{lem:kernel}.

Let $B=\sum k_{i,j}E_{i,j}\in\ker(d\varphi),$ and consider the following groups of conditions $B$ must satisfy:

\bigskip
\noindent
\textbf{Group 1:}
\begin{itemize}
    \item $\varphi([E_{1,1},B])=k_{1,4}=0,$
    \item $\varphi([E_{2,2},B])=k_{2,3}+k_{2,4}=0,$
    \item $\varphi([E_{3,3},B])=-k_{2,3}=0,$
    \item $\varphi([E_{4,4},B])=-k_{1,4}-k_{2,4}=0,$
    \item $\varphi([E_{1,2},B])=k_{2,4}=0,$
    \item $\varphi([E_{1,3},B])=k_{3,4}=0,$
    \item $\varphi([E_{3,4},B])=-k_{1,3}-k_{2,3}=0.$
\end{itemize}
\textbf{Group 2:}
\begin{itemize}
    \item $\varphi([E_{1,4},B])=k_{4,4}-k_{1,1}=0,$
    \item $\varphi([E_{2,3},B])=k_{3,3}-k_{2,2}=0,$
    \item $\varphi([E_{2,4},B])=-k_{1,2}+k_{4,4}-k_{2,2}=0.$
\end{itemize}
\noindent
The restrictions of Group 1 immediately imply that $B$ has the form $$B=k_{1,1}E_{1,1}+k_{2,2}E_{2,2}+k_{3,3}E_{3,3}+k_{4,4}E_{4,4}+k_{1,2}E_{1,2},$$ where the $k_{i,j}$ are potentially nonzero. The restrictions of Group 2 then yield the relations \begin{itemize}
    \item $k_{1,1}=k_{4,4}=k_{2,2}+k_{1,2}$ and
    \item $k_{2,2}=k_{3,3}.$
\end{itemize}
\noindent Combining this with the type-A trace condition $\sum_{i=1}^4k_{i,i}=0$ leads to the relations 
\begin{itemize}
    \item $k_{1,2}=2k_{1,1}$ and
    \item $k_{2,2}=-k_{1,1}$;
\end{itemize}
that is $$\ker(d\varphi)=\{k_{1,1}E_{1,1}-k_{1,1}E_{2,2}-k_{1,1}E_{3,3}+k_{1,1}E_{4,4}+2k_{1,1}E_{1,2}~|~k_{1,1}\in\mathbb{C}\}.$$ Then clearly, there exists a choice of $k_{1,1}$ for which $$k_{1,1}\varphi(E_{1,1}-E_{2,2}-E_{3,3}+E_{4,4}+2E_{1,2})=k_{1,1}\neq 0,$$ so the result follows from Lemma~\ref{lem:kernel}.
\end{proof}

\begin{theorem}\label{thm:fork}
Each of the following pairs, consisting of a poset $\mathcal{P}$ and a one-form $\varphi_{\mathcal{P}},$ constitutes a contact toral-pair $(\mathcal{P},\varphi_{\mathcal{P}}).$
\begin{enumerate}
    \item[\textup{(i)}] $\mathcal{P}_3=\{1,2,3,4,5\}$ with $1\prec 2\prec 3\prec 4,5,$ and $$\varphi_{\mathcal{P}_3}=E_{1,1}^*+E_{1,4}^*+E_{1,5}^*+E_{2,3}^*+E_{2,5}^*,$$
    \item[\textup{(ii)}] $\mathcal{P}_3^*=\{1,2,3,4,5\}$ with $1,2\prec 3\prec 4\prec 5$ and $$\varphi_{\mathcal{P}_3^*}=E_{1,1}^*+E_{1,4}^*+E_{1,5}^*+E_{2,5}^*+E_{3,4}^*.$$
\end{enumerate}
\end{theorem}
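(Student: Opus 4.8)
The plan is to verify, for each of the two pairs $(\mathcal{P}_3,\varphi_{\mathcal{P}_3})$ and $(\mathcal{P}_3^*,\varphi_{\mathcal{P}_3^*})$, the poset conditions \textbf{(CP1)}--\textbf{(CP2)} and the one-form conditions \textbf{(CF1)}--\textbf{(CF4)} directly from the definitions, and then to establish that the displayed one-form is a contact form by explicitly computing $\ker(d\varphi)$ exactly as in the proof of Theorem~\ref{thm:3chain}. The combinatorial checks are immediate: in both posets $|Ext(\mathcal{P})|=3$ (either a unique minimum with two maximal elements, or two minimal elements with a unique maximum) and the Hasse diagram is connected, giving \textbf{(CP1)}--\textbf{(CP2)}; in each case $E_{p,p}^*$ appears as a summand precisely for $p=1$, giving \textbf{(CF1)}; the graph $\Gamma_{\varphi-E_{1,1}^*}$ has five edges on six vertices and is easily seen to be a spanning tree of the comparability graph, giving \textbf{(CF2)}. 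For \textbf{(CF3)} one reads off the sources, sinks, and intermediate vertices of $\Gamma_{\varphi-E_{1,1}^*}$: for $\mathcal{P}_3$ one gets $D=\{1,2\}$ (an order ideal), $U=\{3,4,5\}$ (a filter), and $O=\emptyset$; for $\mathcal{P}_3^*$ one gets $D=\{1,2,3\}$, $U=\{4,5\}$, $O=\emptyset$, and in both cases $\{1,2,3\}$ or $\{1,2\}$ is checked to be an ideal and its complement a filter. For \textbf{(CF4)} one checks that $\Gamma_{\varphi-E_{1,1}^*}$ contains the two edges of $Rel_E(\mathcal{P})$ — namely $1\prec 4$ and $1\prec 5$ for $\mathcal{P}_3$ (present as $E_{1,4}^*,E_{1,5}^*$), and $1\prec 5$, $2\prec 5$ for $\mathcal{P}_3^*$ (present as $E_{1,5}^*$, $E_{2,5}^*$; note $E_{1,5}^*$ and $E_{2,5}^*$ are both summands).

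The substantive step is showing $\varphi$ is contact. For each pair I would write $B=\sum k_{i,j}E_{i,j}\in\ker(d\varphi)$, then impose the conditions $\varphi([E_{i,j},B])=0$ ranging over a basis $\{E_{i,j}\}_{i\preceq j}$ of $\mathfrak{g}(\mathcal{P})$, splitting as in Theorem~\ref{thm:3chain} into a ``Group 1'' of relations (those obtained by bracketing with diagonal elements and with off-diagonal basis elements $E_{i,j}$ with $E_{i,j}^*$ not a summand of $\varphi$) that force $k_{i,j}=0$ for all strict relations $i\prec j$ except possibly $k_{1,2}$, and a ``Group 2'' of relations (obtained by bracketing with the $E_{i,j}$ such that $E_{i,j}^*$ is a summand) that pin down the diagonal entries. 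Combined with the trace-zero condition $\sum_i k_{i,i}=0$, this should yield a one-dimensional $\ker(d\varphi)$ spanned by an element $B$ with $k_{1,2}=2k_{1,1}\neq 0$ (when $k_{1,1}\neq 0$), so that $\varphi(B)=k_{1,1}\neq 0$; invoking Lemma~\ref{lem:kernel} then finishes the argument. Concretely, for $\mathcal{P}_3$ I expect to land on $\ker(d\varphi)=\{k(E_{1,1}-E_{2,2}-E_{3,3}+E_{4,4}-E_{5,5}+\cdots+2E_{1,2})\mid k\in\mathbb{C}\}$ with the precise diagonal signs determined by the chain structure $1\prec 2\prec 3\prec 4,5$, and an analogous computation for $\mathcal{P}_3^*$.

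I expect the main obstacle to be purely bookkeeping: correctly enumerating all the bracket conditions $\varphi([E_{i,j},B])=0$ and tracking the sign conventions in $[E_{i,j},E_{k,l}]=\delta_{jk}E_{i,l}-\delta_{li}E_{k,j}$, since $\mathcal{P}_3$ and $\mathcal{P}_3^*$ have fifteen-dimensional $\mathfrak{g}(\mathcal{P})$ (six diagonal, nine strictly-upper basis elements) rather than the ten-dimensional case handled in Theorem~\ref{thm:3chain}. There is also a small point of care in \textbf{(CF3)}: one must make sure $O_{\varphi-E_{1,1}^*}(\mathcal{P})=\emptyset$, i.e. that the chosen spanning tree genuinely partitions all vertices into sources and sinks with none left over — this constrains which $E_{i,j}^*$ may appear and is exactly why the particular one-forms in the statement are the ``right'' ones. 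Since $\mathcal{P}_3^*$ is the dual poset of $\mathcal{P}_3$ (flip the order) and $\varphi_{\mathcal{P}_3^*}$ is essentially the dual of $\varphi_{\mathcal{P}_3}$, one could alternatively prove (i) and then deduce (ii) by the order-reversing duality that sends $\mathfrak{g}_A(\mathcal{P})$ to $\mathfrak{g}_A(\mathcal{P}^*)$ and preserves the index and the contact property; but given the asymmetric placement of the $E_{1,1}^*$ term, it is cleaner simply to run the kernel computation twice.
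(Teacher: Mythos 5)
Your strategy is exactly the paper's: verify \textbf{(CP1)}--\textbf{(CP2)} and \textbf{(CF1)}--\textbf{(CF4)} directly, then compute $\ker(d\varphi)$ from the bracket conditions, check $\varphi(B)\neq 0$ on a spanning kernel element, and invoke Lemma~\ref{lem:kernel}; the paper likewise proves (i) and disposes of (ii) by symmetry. The method is sound and would go through, but be aware that your anticipated outcome of the kernel computation is wrong in its specifics: for $\mathcal{P}_3$ the condition $\varphi([E_{2,4},B])=-k_{1,2}=0$ kills $k_{1,2}$, so the analogy with Theorem~\ref{thm:3chain} (where $k_{1,2}=2k_{1,1}$ survives) does not carry over. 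The off-diagonal entries that actually survive are $k_{3,4}=-5k_{1,1}$ and $k_{3,5}=5k_{1,1}$, with diagonal $(1,1,-4,1,1)k_{1,1}$; the conclusion $\varphi(B)=k_{1,1}\neq 0$ still holds because $E_{3,4}^*$ and $E_{3,5}^*$ are not summands of $\varphi_{\mathcal{P}_3}$, so nothing is lost, but had the surviving entries happened to be summands of $\varphi$ your argument as written would not immediately close. Also a couple of bookkeeping slips in your side remarks: these posets have five elements, not six, so $\Gamma_{\varphi-E_{1,1}^*}$ is a four-edge spanning tree on five vertices and $\mathfrak{g}(\mathcal{P}_3)$ is $14$-dimensional ($5$ diagonal plus $9$ strict relations), making $\mathfrak{g}_A(\mathcal{P}_3)$ thirteen-dimensional. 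None of this changes the validity of the approach; it only means the actual computation will not match your predicted template and must be carried out on its own terms.
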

\begin{proof}
We prove (i), as (ii) follows via a symmetric argument. Let $\mathcal{P}=\mathcal{P}_3$ and $\varphi=\varphi_{\mathcal{P}_3}.$ Clearly, $\mathcal{P}$ is connected and $|Ext(\mathcal{P})|=3,$ so \textbf{(CP1)} and \textbf{(CP2)} are satisfied. Further, $E_{p,p}^*$ is a nonzero summand of $\varphi$ precisely when $p=1$, $\varphi-E_{1,1}^*$ is clearly small, $U_{\varphi-E_{1,1}^*}(\mathcal{P})=\{3,4,5\}$ is a filter of $\mathcal{P},$ $D_{\varphi-E_{1,1}^*}(\mathcal{P})=\{1,2\}$ is an ideal of $\mathcal{P},$ $O_{\varphi-E_{1,1}^*}(\mathcal{P})=\emptyset$, and $\Gamma_{\varphi-E_{1,1}^*}$ contains edges $(1,4)$ and $(1,5),$ which are all of the edges between elements of $Ext(\mathcal{P}).$ Therefore, all that remains to show is that $\varphi$ is a contact form. We proceed by explicitly computing the kernel of $d\varphi$ and then applying Lemma~\ref{lem:kernel}.

Let $B=\sum k_{i,j}E_{i,j}\in\ker(d\varphi),$ and consider the following groups of conditions $B$ must satisfy:

\bigskip
\noindent
\textbf{Group 1:}
\begin{itemize}
    \item $\varphi([E_{3,3},B])=-k_{2,3}=0,$
    \item $\varphi([E_{4,4},B])=-k_{1,4}=0,$
    \item $\varphi([E_{2,4},B])=-k_{1,2}=0,$
    \item $\varphi([E_{3,4},B])=-k_{1,3}=0.$
\end{itemize}
\textbf{Group 2:}
\begin{itemize}
    \item $\varphi([E_{1,1},B])=k_{1,4}+k_{1,5}=0,$
    \item $\varphi([E_{2,2},B])=k_{2,3}+k_{2,5}=0,$
    \item $\varphi([E_{5,5},B])=-k_{1,5}-k_{2,5}=0,$
    \item $\varphi([E_{1,2},B])=k_{2,4}+k_{2,5}=0,$
    \item $\varphi([E_{3,5},B])=-k_{1,3}-k_{2,3}=0.$
\end{itemize}
\textbf{Group 3:}
\begin{itemize}
    \item $\varphi([E_{1,3},B])=k_{3,4}+k_{3,5}=0,$
    \item $\varphi([E_{1,4},B])=k_{4,4}-k_{1,1}=0,$
    \item $\varphi([E_{1,5},B])=k_{5,5}-k_{1,1}=0,$
    \item $\varphi([E_{2,3},B])=k_{3,3}-k_{2,2}+k_{3,5}=0,$
    \item $\varphi([E_{2,5},B])=k_{5,5}-k_{2,2}-k_{1,2}=0.$
\end{itemize}
\noindent
The restrictions of Groups 1 and 2 immediately imply that $B$ has the form $$B=k_{1,1}E_{1,1}+k_{2,2}E_{2,2}+k_{3,3}E_{3,3}+k_{4,4}E_{4,4}+k_{5,5}E_{5,5}+k_{3,4}E_{3,4}+k_{3,5}E_{3,5},$$ where the $k_{i,j}$ are potentially nonzero. The restrictions of Group 3 then yield the relations $$k_{1,1}=k_{2,2}=k_{4,4}=k_{5,5}=k_{3,3}-k_{3,4}=k_{3,3}+k_{3,5}.$$ Combining this with the type-A trace condition $\sum_{i=1}^5 k_{i,i}=0$ leads to the relations 
\begin{itemize}
    \item $k_{3,3}=-4k_{1,1}$ and
    \item $k_{3,4}=-k_{3,5}=-5k_{1,1};$
\end{itemize}
that is,
\begin{equation*}
\ker(d\varphi)=\{k_{1,1}E_{1,1}+k_{1,1}E_{2,2}-4k_{1,1}E_{3,3}+k_{1,1}E_{4,4}+k_{1,1}E_{5,5}-5k_{1,1}E_{3,4}+5k_{1,1}E_{3,5}~|~k_{1,1}\in\mathbb{C}\}.
\end{equation*}
Then clearly, there exists a choice of $k_{1,1}$ for which $$k_{1,1}\varphi(E_{1,1}+E_{2,2}-4E_{3,3}+E_{4,4}+E_{5,5}-5E_{3,4}+5E_{3,5})=k_{1,1}\neq 0,$$ so the result follows from Lemma~\ref{lem:kernel}.
\end{proof}

\begin{theorem}\label{thm:ctpappendB}
Each of the following pairs, consisting of a poset $\mathcal{P}$ and a one-form $\varphi_{\mathcal{P}},$ constitutes a contact toral-pair $(\mathcal{P},\varphi_{\mathcal{P}}).$
\begin{enumerate}
    \item[\textup{(i)}] $\mathcal{P}_{4,n}=\{1,\dots,n\}$ with $1\preceq 2\preceq \dots\preceq n-1$ as well as $1\preceq 2\preceq \dots\preceq \lfloor\frac{n}{2}\rfloor+1\preceq n,$ and $$\varphi_{\mathcal{P}_{4,n}}=E_{1,1}^*+\sum_{i=1}^{\lfloor\frac{n-1}{2}\rfloor}E_{i,n-i}^*+\sum_{i=1}^{\lfloor\frac{n}{2}\rfloor}E_{i,n}^*$$
    \item[\textup{(ii)}] $\mathcal{P}_{4,n}^*=\{1,\dots,n\}$ with $2\preceq 3\preceq \dots\preceq n$ as well as $1\preceq\lceil\frac{n}{2}\rceil\preceq \lceil\frac{n}{2}\rceil+1\preceq\dots\preceq n,$ and $$\varphi_{\mathcal{P}_{4,n}^*}=E_{1,1}^*+\sum_{i=2}^{\lceil\frac{n}{2}\rceil}E_{i,n-i+2}^*+\sum_{i=\lceil\frac{n}{2}\rceil+1}^{n}E_{1,i}^*.$$
    \item[\textup{(iii)}] $\mathcal{P}_{5,n}=\{1,\dots,n\}$ with $1\preceq 2\preceq\dots\preceq n-1$ as well as $1\preceq 2\preceq\dots\preceq \lfloor\frac{n}{2}\rfloor-1\preceq n,$ and $$\varphi_{\mathcal{P}_{5,n}}=E_{1,1}^*+\sum_{i=1}^{\lfloor\frac{n-1}{2}\rfloor}E_{i,n-i}^*+\sum_{i=1}^{\lfloor\frac{n}{2}\rfloor-1}E_{i,n}^*+E_{\lfloor\frac{n}{2}\rfloor,n-1}^*.$$
    \item[\textup{(iv)}] $\mathcal{P}_{5,n}^*=\{1,\dots,n\}$ with $2\preceq 3\preceq\dots\preceq n$ as well as $1\preceq \lceil\frac{n}{2}\rceil+2\preceq\lceil\frac{n}{2}\rceil+3\preceq\dots\preceq n,$ and $$\varphi_{\mathcal{P}_{5,n}^*}=E_{1,1}^*+\sum_{i=2}^{\lceil\frac{n}{2}\rceil}E_{i,n-i+2}^*+\sum_{i=\lceil\frac{n}{2}\rceil+2}^{n}E_{1,i}^*+E^*_{2,\lceil\frac{n}{2}\rceil+1}.$$
\end{enumerate}
\end{theorem}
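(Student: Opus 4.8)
We outline the argument, deferring the bookkeeping-heavy verifications to Appendix~B. The plan is to check, for each of the four families, first the combinatorial requirements of Definition~\ref{def:contacttoral} and then that the displayed one-form is a contact form, the latter by directly computing $\ker(d\varphi)$ exactly as in the proofs of Theorems~\ref{thm:3chain} and~\ref{thm:fork}. Since $\mathcal{P}_{4,n}^*$ and $\mathcal{P}_{5,n}^*$ are the order duals of $\mathcal{P}_{4,n}$ and $\mathcal{P}_{5,n}$, with $\varphi_{\mathcal{P}_{4,n}^*}$ and $\varphi_{\mathcal{P}_{5,n}^*}$ the transported one-forms, it suffices to treat (i) and (iii); parts (ii) and (iv) then follow by the symmetric argument, just as (ii) follows from (i) in Theorem~\ref{thm:fork}. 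We may assume $n$ is large enough that $|Ext(\mathcal{P})|=3$; for the few small values of $n$ the poset degenerates to a chain and a direct computation as in Theorems~\ref{thm:2chain} and~\ref{thm:3chain} applies.

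For the combinatorial part, note that in every case the Hasse diagram is the chain $1\prec 2\prec\cdots\prec n-1$ with the single extra vertex $n$ attached near its middle; hence $\mathcal{P}$ is connected, giving \textbf{(CP1)}, and has unique minimal element $1$ and exactly two maximal elements, namely $n-1$ and $n$, so $|Ext(\mathcal{P})|=3$, giving \textbf{(CP2)}. From the formula for $\varphi_{\mathcal{P}}$ one reads off that $E_{1,1}^*$ is its only diagonal summand, giving \textbf{(CF1)}. For \textbf{(CF2)}, the edge set of $\Gamma_{\varphi-E_{1,1}^*}$ — the relations $(i,n-i)$ together with the relations $(i,n)$, plus, in the $\mathcal{P}_{5,n}$ case, the extra edge $(\lfloor n/2\rfloor,n-1)$ — consists of exactly $n-1$ edges, meets every vertex, and is acyclic (a star centered at $n$ together with a matching joining each leg of the star to a distinct vertex of the upper part of the chain), hence is a spanning tree of the comparability graph, so $\varphi-E_{1,1}^*$ is small. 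Reading the sources and sinks of $\Gamma_{\varphi-E_{1,1}^*}$ off this description shows $D_{\varphi-E_{1,1}^*}(\mathcal{P})$ is an initial segment of the main chain (an order ideal) and $U_{\varphi-E_{1,1}^*}(\mathcal{P})$ is the complementary terminal segment together with $n$ (a filter), with every vertex being a source or a sink, so $O_{\varphi-E_{1,1}^*}(\mathcal{P})=\emptyset$; this is \textbf{(CF3)}. Finally the tree contains the edges $(1,n-1)$ and $(1,n)$, which are precisely the relations among the elements of $Ext(\mathcal{P})$, giving \textbf{(CF4)}.

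It remains to show $\varphi$ is a contact form. Following Theorems~\ref{thm:3chain} and~\ref{thm:fork}, write $B=\sum k_{i,j}E_{i,j}\in\ker(d\varphi)$ and impose $\varphi([E_{i,j},B])=0$ for every basis element $E_{i,j}$ (with $i\preceq j$) of $\mathfrak{g}_A(\mathcal{P})$ together with the trace-zero condition $\sum_i k_{i,i}=0$. The equations coming from relations that are not summands of $\varphi$ force the corresponding $k_{i,j}$ to vanish, killing all off-diagonal coordinates except the two lying on non-tree relations incident to the ``fork'' vertex $f$ (for $\mathcal{P}_{4,n}$, $f=\lfloor n/2\rfloor+1$ and the survivors are $k_{f,n-1}$ and $k_{f,n}$); the remaining equations, read along the chain, propagate telescopically to force all diagonal coordinates to a common value except $k_{f,f}$, with $k_{f,n-1},k_{f,n}$ proportional to that value, and the trace condition then pins everything down. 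For $\mathcal{P}_{4,n}$ one finds that $\ker(d\varphi)$ is the one-dimensional space spanned by
$$B_0=\sum_{i\neq f}E_{i,i}-(n-1)E_{f,f}-nE_{f,n-1}+nE_{f,n},$$
and the situation for $\mathcal{P}_{5,n}$ is analogous, with a slightly different pattern accounting for the extra summand $E_{\lfloor n/2\rfloor,n-1}^*$. Since the off-diagonal summands of $\varphi$ all lie on the spanning tree $\Gamma_{\varphi-E_{1,1}^*}$, on which $B_0$ has no off-diagonal support, we get $\varphi(B_0)=E_{1,1}^*(B_0)=1\neq 0$. Moreover $\dim\ker(d\varphi)=1$ forces $\dim\mathfrak{g}_A(\mathcal{P})$ to be odd, whence $\ind(\mathfrak{g}_A(\mathcal{P}))\geq 1$; combined with $\ind(\mathfrak{g}_A(\mathcal{P}))\leq\dim\ker(d\varphi)=1$ this gives $\ind(\mathfrak{g}_A(\mathcal{P}))=1$ and shows $\varphi$ is regular, so Lemma~\ref{lem:kernel} shows $\varphi$ is contact.

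The main obstacle, absent from Theorems~\ref{thm:3chain} and~\ref{thm:fork}, is that the linear system defining $\ker(d\varphi)$ grows with $n$, so the solution cannot be obtained by inspection; the real work lies in showing that the telescoping along the long chain genuinely collapses the solution space to a single dimension for every $n$ — in particular handling the two parities of $n$, which shift the upper limits of the sums defining $\varphi$, in a uniform way — and in tracking the exact surviving off-diagonal coordinates so that $\varphi(B_0)\neq 0$ can be confirmed.
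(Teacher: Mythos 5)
Your overall strategy coincides with the paper's: verify \textbf{(CP1)}, \textbf{(CP2)}, \textbf{(CF1)}--\textbf{(CF4)} directly, reduce (ii) and (iv) to (i) and (iii) by duality, and then compute $\ker(d\varphi)$ explicitly so that Lemma~\ref{lem:kernel} applies. The combinatorial verifications (spanning tree, sources/sinks, edges between extremal elements) are correct as you describe them. The problem lies in the part you defer as ``bookkeeping,'' where your asserted answer is wrong.

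Specifically, your claim that the kernel equations kill all off-diagonal coordinates of $B$ except the two, $k_{f,n-1}$ and $k_{f,n}$, incident to the fork vertex $f=\lfloor n/2\rfloor+1$ is false for $n\ge 6$, and the proposed generator $B_0$ is not in $\ker(d\varphi)$. For $\mathcal{P}_{4,n}$ the relation $i\prec f$ (with $1\le i\le\lfloor n/2\rfloor -1$) yields $\varphi([E_{i,f},B])=k_{f,n-i}+k_{f,n}=0$, which does not annihilate $k_{f,n-i}$ but instead forces $k_{f,j}=-k_{f,n}$ for \emph{every} $j$ with $f+1\le j\le n-1$; so the generator carries $\Theta(n)$ surviving off-diagonal entries along the upper half of the chain. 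When $n$ is even there is an additional survivor $k_{\frac{n}{2}-1,\frac{n}{2}}$, not incident to $f$ at all, and two diagonal entries ($k_{\frac{n}{2},\frac{n}{2}}$ and $k_{\frac{n}{2}+1,\frac{n}{2}+1}$) deviate from the common value rather than one. A concrete failure: for $n=6$ one has $\varphi([E_{3,4},B])=k_{4,6}-k_{2,3}$, which equals $6\neq 0$ on your $B_0$. (Your formula happens to agree with the truth only in the degenerate case $n=5$, where $\mathcal{P}_{4,5}$ is the fork of Theorem~\ref{thm:fork}.) The theorem itself survives -- the correct generator still has no off-diagonal support on the edges of $\Gamma_{\varphi-E_{1,1}^*}$, so $\varphi(B)=E_{1,1}^*(B)\neq 0$ and the regularity-via-parity argument you give goes through -- but the ``telescoping'' you defer would not collapse onto your $B_0$, and the even/odd split is not merely a shift of summation limits: the shape of the kernel element genuinely differs between the two parities. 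As written, the sketch asserts a false intermediate result precisely at the step that constitutes the real content of the proof.
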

\begin{proof}
Appendix B.
\end{proof}

\section{Toral posets}\label{sec:contoral}
In this section, we extend the definition of toral poset given in \textbf{\cite{Binary}} so that posets of contact-toral pairs are included in the collection of ``building blocks," thus allowing toral posets to be contact (see Remark~\ref{rem:con}). First, we recall the ``gluing rules" used in \textbf{\cite{Binary}} to combine the posets of toral-pairs to yield toral posets.

Let $\mathcal{Q}$ be an arbitrary poset and $\mathcal{S}$ be a poset of a (contact) toral-pair. We define twelve ways of ``combining" the posets $\mathcal{S}$ and $\mathcal{Q}$ by identifying minimal (resp., maximal) elements of $\mathcal{S}$ with minimal (resp., maximal) elements of $\mathcal{Q}$. If $Ext(\mathcal{S})=2$, then $Ext(\mathcal{S})=\{a_1,c\}$ with $c\prec_{\mathcal{S}}a_1$; and if $Ext(\mathcal{S})=3$, then $Ext(\mathcal{S})=\{a_1,a_2,c\}$ with either $c\prec_{\mathcal{S}} a_1,a_2$ or $a_1,a_2\prec_{\mathcal{S}} c$. Further, assume $x,y,z\in Ext(\mathcal{Q})$. Since the construction rules are defined by identifying minimal elements and maximal elements of $\mathcal{S}$ and $\mathcal{Q}$, assume that if $c,a_1$, or $a_2$ are identified with elements of $\mathcal{Q}$, then those elements are $x,y,$ or $z$, respectively. To ease notation, let $\sim_{\mathcal{P}}$ denote that two elements of a poset $\mathcal{P}$ are related, and let $\nsim_{\mathcal{P}}$ denote that two elements are not related; that is, for $i,j\in\mathcal{P}$, $i\sim_{\mathcal{P}} j$ denotes that $i\preceq_{\mathcal{P}}j$ or $j\preceq_{\mathcal{P}}i$, and $i\nsim_{\mathcal{P}} j$ denotes that both $i\npreceq_{\mathcal{P}}j$ and $j\npreceq_{\mathcal{P}}i$. The following Table~\ref{tab:gluingrules} lists all possible ways (gluing rules) of identifying the elements $c,a_1,a_2\in\mathcal{S}$ with the elements $x,y,z\in\mathcal{Q}$. See Example~\ref{ex:toralposet} for an illustration of a subset of the gluing rules.

\begin{table}[H]
\centering
\begin{tabular}{c|c|c|c}
Gluing Rule & $c$                       & $a_1$                         & $a_2$                          \\ \hline
$A_1$ & $c\neq x$ & $a_1=y$         & $a_2\neq z$                        \\
$A_2$ & $c\neq x$ & $a_1\neq y$             & $a_2=z$                          \\
$B$ & $c\neq x$ & $a_1=y$       & $a_2=z$                                          \\
$C$ & $c=x$ & $a_1\neq y$                 & $a_2\neq z$                                          \\
$D_1$ & $c=x$                 & $a_1=y$, $y\sim x$ & $a_2\neq z$                           \\
$D_2$ & $c=x$                 & $a_1\neq y$  & $a_2= z$, $z\sim x$  \\
$E_1$ & $c=x$                 & $a_1= y$, $y\nsim x$  & $a_2\neq z$    \\
$E_2$ & $c=x$                 & $a_1\neq y$  & $a_2= z$, $z\nsim x$   \\
$F$ & $c=x$                 & $a_1= y$, $y\sim x$  & $a_2= z$, $z\sim x$ \\
$G_1$ & $c=x$                 & $a_1= y$, $y\sim x$  & $a_2= z$, $z\nsim x$   \\
$G_2$ & $c=x$                 & $a_1= y$, $y\nsim x$  & $a_2= z$, $z\sim x$   \\
$H$ & $c=x$                 & $a_1= y$, $y\nsim x$  & $a_2= z$, $z\nsim x$   \\
\end{tabular}
\caption{Gluing rules}\label{tab:gluingrules}
\end{table}

\begin{definition}\label{def:toral}
A poset $\mathcal{P}$ is called toral if there exists a sequence $\{(\mathcal{S}_i,\varphi_{\mathcal{S}_i})\}_{i=1}^n$ consisting of \textup(contact\textup) toral-pairs, and a sequence of posets $\mathcal{S}_1=\mathcal{Q}_1\subset\mathcal{Q}_2\subset\dots\subset\mathcal{Q}_n=\mathcal{P}$ such that $\mathcal{Q}_i$ is formed from $\mathcal{Q}_{i-1}$ and $\mathcal{S}_i$ by applying a rule from the set $\{A_1,A_2,B,C,D_1,D_2,E_1,E_2,F,G_1,G_2,H\}$, for $i=2,\dots,n.$ Such a sequence $\mathcal{S}_1=\mathcal{Q}_1\subset\mathcal{Q}_2\subset\dots\subset\mathcal{Q}_n=\mathcal{P}$ is called a construction sequence for $\mathcal{P}.$
\end{definition}

\begin{Ex}\label{ex:toralposet}
Let $\mathcal{P}$ be the toral poset constructed from the toral-pairs $\{(\mathcal{S}_i,F_i)\}_{i=1}^5$ with attendant construction sequence $\mathcal{S}_1=\mathcal{Q}_1\subset \mathcal{Q}_2\subset\mathcal{Q}_3\subset\mathcal{Q}_4\subset\mathcal{Q}_5=\mathcal{P}$, where
\begin{itemize}
    \item $\mathcal{S}_1=\mathcal{Q}_1$ is isomorphic to the poset on $\{1,2,3\}$ with $1\prec2\prec3$,
    \item $\mathcal{S}_2$ is isomorphic to the poset on $\{1,2,3,4\}$ with $1\prec2\prec3,4$,
    \item $\mathcal{S}_3$ is isomorphic to the poset on $\{1,2,3,4,5,6\}$ with $1\prec2\prec3,4$; $3\prec 5$; and $4\prec 6$,
    \item $\mathcal{S}_4$ is isomorphic to the poset on $\{1,2,3,4\}$ with $1\prec2\prec3\prec 4$,
    \item $\mathcal{S}_5$ is isomorphic to the poset on $\{1,2,3,4\}$ with $1,2\prec3\prec4$,
    \item $\mathcal{Q}_2$ is formed from $\mathcal{S}_1=\mathcal{Q}_1$ and $\mathcal{S}_2$ by applying rule $A_1$,
    \item $\mathcal{Q}_3$ is formed from $\mathcal{Q}_2$ and $\mathcal{S}_3$ by applying rule $C$,
    \item $\mathcal{Q}_4$ is formed from $\mathcal{Q}_3$ and $\mathcal{S}_4$ by applying rule $D_1$, and 
    \item $\mathcal{Q}_5=\mathcal{P}$ is formed from $\mathcal{Q}_4$ and $\mathcal{S}_5$ by applying rule $F$.
\end{itemize}
\noindent
In Figure~\ref{fig:toral} below we illustrate the posets $\mathcal{Q}_i$ of the construction sequence described above, identifying $a_1,a_2,$ and $c$ of $\mathcal{S}_i$, for $i=1,\dots,5$.
\bigskip

\begin{center}
\fbox{\begin{minipage}{45em}
\begin{figure}[H]
$$\begin{tikzpicture}[scale=0.6]
\node [circle, draw = black, fill = black, inner sep = 0.5mm] (v1) at (-2.5,0.5) {};
\node [circle, draw = black, fill = black, inner sep = 0.5mm] (v2) at (-2.5,1.5) {};
\node [circle, draw = black, fill = black, inner sep = 0.5mm] (v3) at (-2.5,2.5) {};
\draw (v1) -- (v2) -- (v3);
\draw[->] (-1.5,1.5) -- (-0.5,1.5);
\node [circle, draw = black, fill = black, inner sep = 0.5mm] (v8) at (0.5,2.5) {};
\node [circle, draw = black, fill = black, inner sep = 0.5mm] (v11) at (2,2.5) {};
\node [circle, draw = black, fill = black, inner sep = 0.5mm] (v10) at (1.5,1.5) {};
\node [circle, draw = black, fill = black, inner sep = 0.5mm] (v6) at (0.5,1.5) {};
\node [circle, draw = black, fill = black, inner sep = 0.5mm] (v5) at (0.5,0.5) {};
\node [circle, draw = black, fill = black, inner sep = 0.5mm] (v9) at (1.5,0.5) {};
\node at (-1,2) {$A_1$};
\draw (v5) -- (v6);
\draw (v6) -- (v8);
\draw (v9) -- (v10) -- (v8);
\draw (v10) -- (v11);
\draw [->] (2.5,1.5) -- (3.5,1.5);
\node at (3,2) {$C$};
\node [circle, draw = black, fill = black, inner sep = 0.5mm] (v15) at (4.5,2.5) {};
\node [circle, draw = black, fill = black, inner sep = 0.5mm] (v18) at (6,2.5) {};
\node [circle, draw = black, fill = black, inner sep = 0.5mm] (v13) at (4.5,1.5) {};
\node [circle, draw = black, fill = black, inner sep = 0.5mm] (v17) at (5.5,1.5) {};
\node [circle, draw = black, fill = black, inner sep = 0.5mm] (v12) at (4.5,0.5) {};
\node [circle, draw = black, fill = black, inner sep = 0.5mm] (v16) at (5.5,0.5) {};
\node [circle, draw = black, fill = black, inner sep = 0.5mm] (v14) at (7,3.5) {};
\node [circle, draw = black, fill = black, inner sep = 0.5mm] (v20) at (8,3.5) {};
\node [circle, draw = black, fill = black, inner sep = 0.5mm] (v7) at (7,2.5) {};
\node [circle, draw = black, fill = black, inner sep = 0.5mm] (v19) at (8,2.5) {};
\node [circle, draw = black, fill = black, inner sep = 0.5mm] (v4) at (7.5,1.5) {};
\draw (v16) -- (v4) -- (v7) -- (v14);
\draw (v4) -- (v19) -- (v20);
\draw (v12) -- (v13);
\draw (v13) -- (v15);
\draw (v16) -- (v17) -- (v15);
\draw (v17) -- (v18);
\draw[->] (9,1.5) -- (10,1.5);
\node at (9.5,2) {$D_1$};

\node at (1.5,0) {$c$};
\node at (0.5,3) {$a_1$};
\node at (2,3) {$a_2$};
\node at (5.5,0) {$c$};
\node at (7,4) {$a_1$};
\node at (8,4) {$a_2$};
\node at (12,0) {$c$};
\node at (13.5,4) {$a_1$};
\node at (17.5,3) {$c$};
\node at (17.5,0) {$a_1$};
\node at (18.5,0) {$a_2$};
\node at (-2.5,-1) {$\mathcal{S}_1=\mathcal{Q}_1$};
\node at (1,-1) {$\mathcal{Q}_2$};
\node at (6,-1) {$\mathcal{Q}_3$};
\node at (12.5,-1) {$\mathcal{Q}_4$};
\node at (19,-1) {$\mathcal{Q}_5=\mathcal{P}$};
\node [circle, draw = black, fill = black, inner sep = 0.5mm] (v21) at (11,0.5) {};
\node [circle, draw = black, fill = black, inner sep = 0.5mm] (v22) at (11,1.5) {};
\node [circle, draw = black, fill = black, inner sep = 0.5mm] (v23) at (11,2.5) {};
\node [circle, draw = black, fill = black, inner sep = 0.5mm] (v24) at (12,1.5) {};
\node [circle, draw = black, fill = black, inner sep = 0.5mm] (v25) at (12.5,2.5) {};
\node [circle, draw = black, fill = black, inner sep = 0.5mm] (v26) at (12,0.5) {};
\node [circle, draw = black, fill = black, inner sep = 0.5mm] at (14,1.5) {};
\node [circle, draw = black, fill = black, inner sep = 0.5mm] (v27) at (13.5,2.5) {};
\node [circle, draw = black, fill = black, inner sep = 0.5mm] (v28) at (13.5,3.5) {};
\node [circle, draw = black, fill = black, inner sep = 0.5mm] (v31) at (14.5,3.5) {};
\node [circle, draw = black, fill = black, inner sep = 0.5mm] (v30) at (14.5,2.5) {};
\draw (v21) -- (v22) -- (v23) -- (v24) -- (v25);
\draw (v24) -- (v26) -- (14,1.5) node (v29) {} -- (v27) -- (v28);
\draw (v29) -- (v30) -- (v31);
\node [circle, draw = black, fill = black, inner sep = 0.5mm] (v32) at (13,1.5) {};
\node [circle, draw = black, fill = black, inner sep = 0.5mm] (v33) at (13,2.5) {};
\draw (v26) -- (v32) -- (v33) -- (v28);
\node (v36) at (17.5,0.5) [circle, draw = black, fill = black, inner sep = 0.5mm] {};
\node (v37) at (17.5,1.5) [circle, draw = black, fill = black, inner sep = 0.5mm] {};
\node (v38) at (17.5,2.5) [circle, draw = black, fill = black, inner sep = 0.5mm] {};
\node (v39) at (18.5,1.5) [circle, draw = black, fill = black, inner sep = 0.5mm] {};
\node (v48) at (19,2.5) [circle, draw = black, fill = black, inner sep = 0.5mm] {};
\node (v40) at (18.5,0.5) [circle, draw = black, fill = black, inner sep = 0.5mm] {};
\node (v41) at (19.5,1.5) [circle, draw = black, fill = black, inner sep = 0.5mm] {};
\node (v45) at (20.5,1.5) [circle, draw = black, fill = black, inner sep = 0.5mm] {};
\node (v42) at (19.5,2.5) [circle, draw = black, fill = black, inner sep = 0.5mm] {};
\node (v44) at (20,2.5) [circle, draw = black, fill = black, inner sep = 0.5mm] {};
\node (v46) at (21,2.5) [circle, draw = black, fill = black, inner sep = 0.5mm] {};
\node (v43) at (20,3.5) [circle, draw = black, fill = black, inner sep = 0.5mm] {};
\node (v47) at (21,3.5) [circle, draw = black, fill = black, inner sep = 0.5mm] {};
\draw[->] (15.5,1.5) -- (16.5,1.5);
\node at (16,2) {$F$};
\draw (v36) -- (v37) -- (v38) -- (v39) -- (v40) -- (v41) -- (v42) -- (v43) -- (v44) -- (v45) -- (v46) -- (v47);
\draw (v45) -- (v40);
\draw (v39) -- (v48);
\node [circle, draw = black, fill = black, inner sep = 0.5mm] (v49) at (18,1.5) {};
\draw (v36) -- (v49) -- (v40);
\draw (v49) -- (v38);
\end{tikzpicture}$$
\caption{Construction sequence of $\mathcal{P}$}\label{fig:toral}
\end{figure}
\end{minipage}}
\end{center}
\end{Ex}
\bigskip

\begin{remark}\label{rem:con}
To retrieve the original definition of toral poset given in \textup{\textbf{\cite{Binary}}}, one merely has to omit mention of contact toral-pairs from Definition~\ref{def:toral}. Combining Theorems~\ref{lem:table} and~\ref{lem:cycle} below, one can deduce that toral posets, as they were originally defined, could not be contact. However, since the posets of contact toral-pairs are toral under our extended definition, it follows immediately that toral posets can now be contact.
\end{remark}

\begin{remark}\label{rem:h2}
Our extended definition of toral poset is motivated by results of \textup{\textbf{\cite{ContactLiePoset}}}, where the authors find that if a connected poset $\mathcal{P}$ of height at most two is contact, then it is toral. In particular, the authors show that a connected poset $\mathcal{P}$ of height at most two is contact if and only if $\mathcal{P}$ is toral and constructed from the \textup(contact\textup) toral-pairs $\{(\mathcal{S}_i,F_{\mathcal{S}_i})\}_{i=1}^n$ with construction sequence $\mathcal{S}_1=\mathcal{Q}_1\subset\mathcal{Q}_2\subset\cdots\subset\mathcal{Q}_n=\mathcal{P}$, where
\begin{enumerate}
    \item $(\mathcal{S}_i, F_{S_i})$ is the contact toral-pair of Theorem~\ref{thm:2chain} for exactly one value of $i\in\{1,\hdots,n\}$,
    \item the remaining pairs are toral-pairs, and
    \item $\mathcal{Q}_i$ is formed from $\mathcal{Q}_{i-1}$ and $\mathcal{S}_i$ by applying rules from the set $\{A_1,A_2,C,D_1,D_2,F\}$, for $i=2,\hdots,n$.
\end{enumerate}
The main result of this paper extends the above characterization to include posets of arbitrary height.
\end{remark}

Having extended the notion of toral poset, it is worth documenting how certain results of \textbf{\cite{Binary}} translate in this more general setting. To do so, we require the following theorem, which establishes a recursive relationship between the index of a type-A Lie poset algebra associated to a toral poset and the index of the type-A Lie poset algebras associated to each of the toral poset's building blocks.

\begin{theorem}\label{lem:table}
Let $\mathcal{Q}$ be a toral poset, $\mathcal{S}$ be the poset of a \textup(contact\textup) toral-pair, and $\mathcal{P}$ be a toral poset formed from $\mathcal{Q}$ and $\mathcal{S}$ by applying a rule from the set $\{A_1,A_2,B,C,D_1,D_2,E_1,E_2,F,G_1,G_2,H\}$. Then for each possible gluing rule applied in forming $\mathcal{P}$ from $\mathcal{Q}$ and $\mathcal{S}$, the following table gives the value $\ind(\mathfrak{g}_A(\mathcal{P}))-\ind(\mathfrak{g}_A(\mathcal{Q}))$ in terms of $\ind(\mathfrak{g}_A(\mathcal{S}))$.

\begin{table}[H]
\centering
\begin{tabular}{c|c}
Gluing Rule                     & $\ind(\mathfrak{g}_A(\mathcal{P}))-\ind(\mathfrak{g}_A(\mathcal{Q}))$ \\ \hline
$A_1$ &  $\ind(\mathfrak{g}_A(\mathcal{S}))$                         \\
$A_2$ &  $\ind(\mathfrak{g}_A(\mathcal{S}))$                         \\
$B$ &  $\ind(\mathfrak{g}_A(\mathcal{S}))+1$                         \\
$C$ &  $\ind(\mathfrak{g}_A(\mathcal{S}))$                         \\
$D_1$ & $\ind(\mathfrak{g}_A(\mathcal{S}))$                         \\
$D_2$ &  $\ind(\mathfrak{g}_A(\mathcal{S}))$                         \\
$E_1$ &  $\ind(\mathfrak{g}_A(\mathcal{S}))+1$                         \\
$E_2$ &  $\ind(\mathfrak{g}_A(\mathcal{S}))+1$                         \\
$F$ &  $\ind(\mathfrak{g}_A(\mathcal{S}))$                         \\
$G_1$ &  $\ind(\mathfrak{g}_A(\mathcal{S}))+1$                         \\
$G_2$ &  $\ind(\mathfrak{g}_A(\mathcal{S}))+1$                         \\
$H$ &  $\ind(\mathfrak{g}_A(\mathcal{S}))+2$                         \\
\end{tabular}
\caption{Gluing rules and index}\label{tab:gluingrulesind}
\end{table}
\end{theorem}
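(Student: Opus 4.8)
The plan is to reduce the computation of $\ind(\mathfrak{g}_A(\mathcal{P}))$ to that of $\ind(\mathfrak{g}_A(\mathcal{Q}))$ and $\ind(\mathfrak{g}_A(\mathcal{S}))$ by analyzing how the incidence structure of $\mathcal{P}$ decomposes when $\mathcal{P}$ is built from $\mathcal{Q}$ and $\mathcal{S}$ via one of the twelve gluing rules. First I would set up notation: write $\mathcal{S} \cap \mathcal{Q}$ for the set of elements of $\mathcal{S}$ identified with elements of $\mathcal{Q}$ (either $\{c\}$, $\{c,a_1\}$, $\{c,a_2\}$, $\{a_1\}$, $\{a_2\}$, or $\{a_1,a_2\}$, depending on the rule), and observe that $Rel(\mathcal{P}) = Rel(\mathcal{Q}) \sqcup (Rel(\mathcal{S}) \setminus Rel(\mathcal{S}_{\mathcal{S}\cap\mathcal{Q}}))$ together with whatever new relations are forced by transitivity across the identified elements. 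The key structural point — which I would isolate as a lemma or borrow from \textbf{\cite{Binary}} — is that the gluing rules are designed precisely so that \emph{no new transitive relations} are created except in the cases where an identified extremal element of $\mathcal{S}$ becomes comparable (in $\mathcal{Q}$) to an element it was incomparable to before; this is exactly what the $\sim_{\mathcal{Q}}$ versus $\nsim_{\mathcal{Q}}$ distinction in Table~\ref{tab:gluingrules} tracks, and it is the source of the $+1$ and $+2$ corrections.

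Next I would invoke the combinatorial index formula for type-A Lie poset algebras from \textbf{\cite{SeriesA}}, which expresses $\ind(\mathfrak{g}_A(\mathcal{P}))$ in terms of the topology of the simplicial complex $\Sigma(\mathcal{P})$ (roughly, ranks of boundary maps, or equivalently reduced Betti numbers / a Euler-characteristic-type count together with a correction for the type-A trace condition). The main computation is then a Mayer–Vietoris-type argument: since $\mathcal{P}$ is glued from $\mathcal{Q}$ and $\mathcal{S}$ along the induced subposet on $\mathcal{S}\cap\mathcal{Q}$, the complex $\Sigma(\mathcal{P})$ is the union $\Sigma(\mathcal{Q}) \cup \Sigma(\mathcal{S})$ with intersection $\Sigma(\mathcal{S}_{\mathcal{S}\cap\mathcal{Q}})$. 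Because $\mathcal{S}$ has $|Ext(\mathcal{S})| \in \{2,3\}$ and hence a unique minimal or maximal element, $\Sigma(\mathcal{S})$ is a cone, hence contractible; the intersection $\Sigma(\mathcal{S}_{\mathcal{S}\cap\mathcal{Q}})$ is either a point, an edge, or two disjoint points (two points exactly when $\{a_1,a_2\}$ is identified and $y\nsim_{\mathcal{Q}} x$-type incomparabilities make them non-adjacent in $\mathcal{Q}$). Feeding these into the index formula and carefully bookkeeping the trace-condition correction term, one reads off: gluing along a contractible intersection (a point or an edge) contributes $\ind(\mathfrak{g}_A(\mathcal{S}))$; gluing along a disconnected two-point intersection creates an extra $1$-cycle and contributes $\ind(\mathfrak{g}_A(\mathcal{S}))+1$; and rule $H$, where both $a_1$ and $a_2$ are glued to mutually-and-separately-incomparable elements, creates two such cycles, giving $\ind(\mathfrak{g}_A(\mathcal{S}))+2$. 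I would organize the verification as a case analysis over the twelve rules, grouping them by the isomorphism type of $\Sigma(\mathcal{S}_{\mathcal{S}\cap\mathcal{Q}})$ and its embedding in $\Sigma(\mathcal{Q})$, so that rules with the same topological behavior ($A_1,A_2,C,D_1,D_2,F$ versus $B,E_1,E_2,G_1,G_2$ versus $H$) are handled uniformly.

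The hard part will be making the Mayer–Vietoris bookkeeping precise at the level of the index formula rather than at the level of homology of $\Sigma(\mathcal{P})$ alone: the index of $\mathfrak{g}_A(\mathcal{P})$ is \emph{not} simply a Betti number of $\Sigma(\mathcal{P})$ but involves a correction coming from restricting to trace-zero matrices (equivalently, the difference between $\mathfrak{g}(\mathcal{P})$ and $\mathfrak{g}_A(\mathcal{P})$), and this correction itself depends on whether $\mathcal{P}$ is connected and on the rank of a certain ``toral'' part. So I would need to track two quantities in tandem through each gluing — the relevant homological dimension and the trace-correction — and check that the non-type-A index $\ind(\mathfrak{g}(\mathcal{P})) - \ind(\mathfrak{g}(\mathcal{Q}))$ behaves predictably (it should contribute $\ind(\mathfrak{g}(\mathcal{S}))$ in every case, with all the rule-dependent $+1$/$+2$ discrepancies arising purely from how the trace-correction terms combine when extremal elements get identified and the ``$U$/$D$'' partition of the small one-form is or is not compatible across the glued piece). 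Once that accounting is set up correctly for one representative rule in each of the three groups, the remaining rules follow by the symmetry $\mathcal{P} \leftrightarrow \mathcal{P}^*$ (reversing the poset order) and by the near-identical structure of the $A_i$, $D_i$, $E_i$, $G_i$ pairs; I would state this explicitly to avoid twelve near-duplicate computations. A subtle point worth flagging in the write-up: one must confirm that the result $\mathcal{P}$ of every listed gluing is again a \emph{valid} toral poset (so that the index formula of \textbf{\cite{SeriesA}} applies in the expected form), i.e. that the class of toral posets is closed under the twelve rules — this is essentially by Definition~\ref{def:toral}, but the contact building blocks are new, so it deserves a sentence of justification.
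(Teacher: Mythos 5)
Your plan lands on the right tool and the right case grouping, but it takes a more roundabout route than the paper. The paper's entire proof is two sentences: it cites the combinatorial index formula of \textbf{\cite{SeriesA}} (Theorem 20 together with Remark 6), which expresses $\ind(\mathfrak{g}_A(\mathcal{P}))$ directly in terms of the combinatorial quantities $|Rel_E(\mathcal{P})|$ and $|Ext(\mathcal{P})|$, and then observes that every building block satisfies $|Rel_E(\mathcal{S})|-|Ext(\mathcal{S})|+1=0$; from there the table is read off by counting, for each rule, how many new extremal elements and how many new relations between extremal elements the gluing introduces. Your Mayer--Vietoris argument on $\Sigma(\mathcal{P})=\Sigma(\mathcal{Q})\cup\Sigma(\mathcal{S})$ is an equivalent but heavier formulation of the same bookkeeping: by Theorem~\ref{thm:wedge} the quantity $|Rel_E(\mathcal{P})|-|Ext(\mathcal{P})|+1$ \emph{is} the number of one-spheres in the wedge decomposition of $\Sigma(\mathcal{P})$, so tracking the first homology through the gluing and tracking $|Rel_E|-|Ext|$ are the same computation. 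What your approach buys is a topological explanation of \emph{why} the rules split into the three groups ($+0$, $+1$, $+2$): the intersection $\Sigma(\mathcal{S}_{\mathcal{S}\cap\mathcal{Q}})$ is connected for $A_1,A_2,C,D_1,D_2,F$ and has one (resp.\ two) extra components for $B,E_1,E_2,G_1,G_2$ (resp.\ $H$). What it costs is precisely the step you flag as ``the hard part'': relating the homological count back to the index, and tracking the trace correction --- neither of which is necessary if you use the \textbf{\cite{SeriesA}} formula in its stated combinatorial form. Two small imprecisions in your sketch, neither fatal: the intersection $\Sigma(\mathcal{S}_{\mathcal{S}\cap\mathcal{Q}})$ can also be three points (rules $G_1,G_2,H$), not just one, two, or an edge, though your final accounting for $H$ is correct; and your claimed structural lemma that no new transitive relations cross the glued pieces is true because the identified elements are extremal (one cannot chain a relation through a minimal or maximal element), which is worth saying rather than deferring to \textbf{\cite{Binary}}.
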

\begin{proof}
Note that for posets $\mathcal{S}$ of (contact) toral pairs, one has $|Rel_E(\mathcal{S})|-|Ext(\mathcal{S})|+1=0$. Combining Remark 6 and Theorem 20 of \textbf{\cite{SeriesA}}, the result follows.
\end{proof}

\begin{remark}
Note that Theorem~\ref{lem:table} is an extension of Theorem 22 of \textup{\textbf{\cite{ContactLiePoset}}}.
\end{remark}

\noindent
Considering Theorem~\ref{lem:table} above, we rephrase Theorems 6, 7, and 8 of \textbf{\cite{Binary}} below, for which the proofs remain unchanged.

\begin{theorem}\label{thm:indform}
If $\mathcal{P}$ is a toral poset and $|C(\mathcal{P})|$ denotes the number of contact toral-pairs in a construction sequence of $\mathcal{P}$, then $\ind(\mathfrak{g}_A(\mathcal{P}))=|Rel_E(\mathcal{P})|-|Ext(\mathcal{P})|+|C(\mathcal{P})|+1$.
\end{theorem}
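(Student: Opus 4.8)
The plan is to induct on the length $n$ of a construction sequence $\mathcal{S}_1=\mathcal{Q}_1\subset\cdots\subset\mathcal{Q}_n=\mathcal{P}$ for $\mathcal{P}$, using Theorem~\ref{lem:table} as the engine for the inductive step and tracking how each of the three quantities $|Rel_E(\cdot)|$, $|Ext(\cdot)|$, and $|C(\cdot)|$ changes under each gluing rule. For the base case, $\mathcal{P}=\mathcal{S}_1$ is the poset of a single (contact) toral-pair; as noted in the proof of Theorem~\ref{lem:table}, such a poset satisfies $|Rel_E(\mathcal{S}_1)|-|Ext(\mathcal{S}_1)|+1=0$, while $\ind(\mathfrak{g}_A(\mathcal{S}_1))$ is $0$ if $\mathcal{S}_1$ comes from a toral-pair (Frobenius) and $1$ if it comes from a contact toral-pair, which is exactly $|C(\mathcal{S}_1)|$. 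So the formula holds when $n=1$.

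For the inductive step, suppose the formula holds for $\mathcal{Q}=\mathcal{Q}_{n-1}$, and let $\mathcal{S}=\mathcal{S}_n$ be glued on via some rule $R$ to form $\mathcal{P}=\mathcal{Q}_n$. First, $|C(\mathcal{P})|=|C(\mathcal{Q})|+|C(\mathcal{S})|$, where $|C(\mathcal{S})|\in\{0,1\}$ records whether $\mathcal{S}$ is a contact toral-pair; this is immediate from the definition. Next, by Theorem~\ref{lem:table},
\[
\ind(\mathfrak{g}_A(\mathcal{P}))-\ind(\mathfrak{g}_A(\mathcal{Q}))=\ind(\mathfrak{g}_A(\mathcal{S}))+\epsilon(R),
\]
where $\epsilon(R)=1$ for $R\in\{B,E_1,E_2,G_1,G_2\}$, $\epsilon(R)=2$ for $R=H$, and $\epsilon(R)=0$ otherwise. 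Since $\mathcal{S}$ is the poset of a (contact) toral-pair, $\ind(\mathfrak{g}_A(\mathcal{S}))=|C(\mathcal{S})|$ (using $|Rel_E(\mathcal{S})|-|Ext(\mathcal{S})|+1=0$ together with Theorem~\ref{lem:table} applied in the length-two case, or directly from the base-case analysis). Combining with the inductive hypothesis $\ind(\mathfrak{g}_A(\mathcal{Q}))=|Rel_E(\mathcal{Q})|-|Ext(\mathcal{Q})|+|C(\mathcal{Q})|+1$, it remains to verify, rule by rule, the purely combinatorial identity
\[
|Rel_E(\mathcal{P})|-|Ext(\mathcal{P})|=\bigl(|Rel_E(\mathcal{Q})|-|Ext(\mathcal{Q})|\bigr)+\epsilon(R).
\]

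The heart of the argument — and the main obstacle — is this last combinatorial bookkeeping, which must be checked for each of the twelve gluing rules by analyzing how identifying the extremal elements $c,a_1,a_2$ of $\mathcal{S}$ with $x,y,z\in Ext(\mathcal{Q})$ affects the set of extremal elements and the relations among them. One tracks: which elements of $Ext(\mathcal{S})\cup Ext(\mathcal{Q})$ remain extremal in $\mathcal{P}$ (an element identified with an interior-facing counterpart may cease to be extremal, governed by the $\sim_x$/$\nsim_x$ conditions distinguishing, e.g., $D_1$ from $E_1$), and which strict relations between extremal elements of $\mathcal{P}$ are newly created versus inherited. The key recurring fact is that because $\mathcal{S}$ has a unique minimal or unique maximal element (forced by $|Ext(\mathcal{S})|\le 3$), the comparability structure among $Ext(\mathcal{P})$ is the "amalgamated" union of those of $\mathcal{Q}$ and $\mathcal{S}$ along the identified points, so $|Rel_E(\mathcal{P})|$ and $|Ext(\mathcal{P})|$ are each expressible as a sum of contributions from $\mathcal{Q}$ and from $\mathcal{S}$ minus overlap terms — and the net effect is precisely $\epsilon(R)$. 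Since this is exactly the combinatorial content already verified in the proof of Theorem 7 of \textbf{\cite{Binary}} (the rules and the offsets $\epsilon(R)$ are unchanged; only the extra additive term $|C(\mathcal{P})|$ is new, and it splits additively with no interaction with the gluing), the argument of \textbf{\cite{Binary}} carries over verbatim once the $|C|$-term is inserted, completing the induction.
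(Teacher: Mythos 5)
Your proposal is correct and follows essentially the same route as the paper, which simply observes that the proofs of Theorems 6--8 of \textbf{\cite{Binary}} carry over unchanged once Theorem~\ref{lem:table} supplies the index increments for the enlarged set of building blocks; your induction, with the base case $\ind(\mathfrak{g}_A(\mathcal{S}_1))=|C(\mathcal{S}_1)|$ and the rule-by-rule identity $|Rel_E(\mathcal{P})|-|Ext(\mathcal{P})|=|Rel_E(\mathcal{Q})|-|Ext(\mathcal{Q})|+\epsilon(R)$, is precisely the content of that argument made explicit. No gaps.
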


\begin{theorem}\label{thm:wedge}
If $\mathcal{P}$ is a toral poset, then $\Sigma(\mathcal{P})$ is homotopic to a wedge sum of $|Rel_E(\mathcal{P})|-|Ext(\mathcal{P})|+1$ one-spheres.
\end{theorem}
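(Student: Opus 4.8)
The plan is to argue by induction on the length $n$ of a construction sequence $\mathcal{S}_1=\mathcal{Q}_1\subset\cdots\subset\mathcal{Q}_n=\mathcal{P}$ for $\mathcal{P}$, essentially reproducing the topological argument behind Theorem 8 of \textbf{\cite{Binary}}. Only two features of a building block $\mathcal{S}$ (the poset of a toral-pair or a contact toral-pair) are needed. First, $\Sigma(\mathcal{S})$ is contractible: since \textbf{(P1)}/\textbf{(CP2)} force $|Ext(\mathcal{S})|\in\{2,3\}$, the poset $\mathcal{S}$ has a unique minimal or a unique maximal element $m$, and adjoining $m$ to any chain not containing it again yields a chain, so $\Sigma(\mathcal{S})$ is a cone with apex $m$. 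Second, $|Rel_E(\mathcal{S})|-|Ext(\mathcal{S})|+1=0$, as already recorded in the proof of Theorem~\ref{lem:table}. These dispose of the base case $n=1$: $\Sigma(\mathcal{P})=\Sigma(\mathcal{S}_1)$ is contractible, which is a wedge of $0=|Rel_E(\mathcal{P})|-|Ext(\mathcal{P})|+1$ one-spheres.

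For the inductive step, put $\mathcal{Q}=\mathcal{Q}_{n-1}$ and $\mathcal{S}=\mathcal{S}_n$, so $\mathcal{P}$ is obtained by identifying certain minimal (resp.\ maximal) elements of $\mathcal{S}$ with minimal (resp.\ maximal) elements of $\mathcal{Q}$; let $T\subseteq Ext(\mathcal{S})$ be the (nonempty, at most three-element) set of identified elements. One checks that gluing creates no new relations internal to $\mathcal{Q}$ or to $\mathcal{S}$, and that no chain of $\mathcal{P}$ meets both $\mathcal{Q}\setminus\mathcal{S}$ and $\mathcal{S}\setminus\mathcal{Q}$ (a ``mixed'' chain would force an identified element to be maximal in $\mathcal{S}$ and minimal in $\mathcal{Q}$, contrary to the min$\leftrightarrow$min, max$\leftrightarrow$max nature of the gluing). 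Hence $\Sigma(\mathcal{P})=\Sigma(\mathcal{Q})\cup\Sigma(\mathcal{S})$ as subcomplexes, and the intersection $\Sigma(\mathcal{Q})\cap\Sigma(\mathcal{S})$ is the order complex of $T$ retaining only those relations present in $\mathcal{Q}$ \emph{and} in $\mathcal{S}$. Since in $\mathcal{S}$ the element $c$ is comparable to each of $a_1,a_2$ while $a_1\nsim a_2$, and since two maximal (resp.\ minimal) elements of $\mathcal{Q}$ are always incomparable, this intersection complex is a disjoint union of contractible pieces (isolated points, a single edge, or a path on three vertices); write $m\in\{1,2,3\}$ for its number of components. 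The ``$y\sim x$'' versus ``$y\nsim x$'' conditions in Table~\ref{tab:gluingrules} are exactly what determines whether an edge of $T$ survives into the intersection, and reading off rule-by-rule gives $m=1$ for $A_1,A_2,C,D_1,D_2,F$; $m=2$ for $B,E_1,E_2,G_1,G_2$; and $m=3$ for $H$.

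Now invoke the homotopy-pushout (Mayer--Vietoris) description: all inclusions of subcomplexes are cofibrations and $\Sigma(\mathcal{S})\simeq *$, so $\Sigma(\mathcal{P})\simeq\Sigma(\mathcal{Q})\cup_{\Sigma(\mathcal{Q})\cap\Sigma(\mathcal{S})}C\big(\Sigma(\mathcal{Q})\cap\Sigma(\mathcal{S})\big)$, i.e.\ $\Sigma(\mathcal{Q})$ with a cone attached along a space homotopy equivalent to $m$ points. As $\mathcal{Q}$ is connected (building blocks are connected, being glued along the nonempty set $T$), $\Sigma(\mathcal{Q})$ is path-connected, so all $m$ points lie in one component and coning them off produces $\Sigma(\mathcal{Q})\vee\bigvee^{\,m-1}S^{1}$. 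By the inductive hypothesis $\Sigma(\mathcal{Q})\simeq\bigvee^{\,r}S^{1}$ with $r=|Rel_E(\mathcal{Q})|-|Ext(\mathcal{Q})|+1$, whence $\Sigma(\mathcal{P})\simeq\bigvee^{\,r+m-1}S^{1}$; it remains to verify, for each gluing rule, the combinatorial identity $m-1=\big(|Rel_E(\mathcal{P})|-|Ext(\mathcal{P})|\big)-\big(|Rel_E(\mathcal{Q})|-|Ext(\mathcal{Q})|\big)$, which is a direct count carried out in \textbf{\cite{Binary}} and is consistent with Theorems~\ref{lem:table} and~\ref{thm:indform}. This closes the induction.

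The main obstacle is the rule-by-rule bookkeeping of the last two steps, and in particular getting the intersection complex right: $\Sigma(\mathcal{Q})\cap\Sigma(\mathcal{S})$ is the complex of \emph{common} faces, so a relation between identified extreme elements contributes an edge only when it holds on both sides — this is precisely why the number $m$ of components depends on the $\sim/\nsim$ clauses of Table~\ref{tab:gluingrules}. Tracking this, together with confirming the absence of mixed chains so the Mayer--Vietoris decomposition applies, through all twelve gluing rules is the technical heart of the proof. Since none of these considerations distinguishes an original toral-pair from a contact toral-pair, the argument is a verbatim adaptation of the one in \textbf{\cite{Binary}}.
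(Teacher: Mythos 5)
Your proof is correct and follows the same route the paper takes: the paper simply notes that Theorem~\ref{thm:wedge} is Theorem 8 of \textbf{\cite{Binary}} with the proof ``unchanged,'' and that proof is exactly the inductive Mayer--Vietoris argument you reconstruct, hinging on the two facts that each building block $\mathcal{S}$ has contractible order complex and satisfies $|Rel_E(\mathcal{S})|-|Ext(\mathcal{S})|+1=0$ (both of which hold equally for the new contact toral-pairs). Your rule-by-rule count of the components of $\Sigma(\mathcal{Q})\cap\Sigma(\mathcal{S})$ and the identity $m-1=\Delta\bigl(|Rel_E|-|Ext|\bigr)$ check out.
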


\begin{theorem}\label{thm:Frob}
Let $\mathcal{P}$ be a toral poset constructed from the \textup(contact\textup) toral-pairs $\{(\mathcal{S}_i,\varphi_{\mathcal{S}_i})\}_{i=1}^n$ with construction sequence $\mathcal{S}_1=\mathcal{Q}_1\subset\mathcal{Q}_2\subset\cdots\subset\mathcal{Q}_n=\mathcal{P}$. Then, $\mathcal{P}$ is Frobenius if and only if
\begin{enumerate}
    \item[\textup1.] $(\mathcal{S}_i,\varphi_{\mathcal{S}_i})$ is a toral-pair, for $i=1,\hdots,n$, and
    \item[\textup2.] $\mathcal{Q}_i$ is formed from $\mathcal{Q}_{i-1}$ and $\mathcal{S}_i$ by applying rules from the set $\{A_1,A_2,C,D_1,D_2,F\}$, for $i=2,\hdots,n$.
\end{enumerate}
\end{theorem}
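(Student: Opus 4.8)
The plan is to induct on the length $n$ of the given construction sequence, letting Theorem~\ref{lem:table} (or equivalently Theorem~\ref{thm:indform}) do essentially all of the work, since $\mathcal{P}$ is Frobenius precisely when $\ind(\mathfrak{g}_A(\mathcal{P}))=0$. First I would record the dichotomy that drives everything: if $(\mathcal{S},\varphi_{\mathcal{S}})$ is a toral-pair then $\mathfrak{g}_A(\mathcal{S})$ is Frobenius by definition, so $\ind(\mathfrak{g}_A(\mathcal{S}))=0$; whereas if $(\mathcal{S},\varphi_{\mathcal{S}})$ is a contact toral-pair then $\mathfrak{g}_A(\mathcal{S})$ is contact and hence has index $1$. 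In particular no pair is simultaneously of both types, so ``$(\mathcal{S}_i,\varphi_{\mathcal{S}_i})$ is a toral-pair'' is equivalent to ``$\ind(\mathfrak{g}_A(\mathcal{S}_i))=0$.'' The base case $n=1$ is then immediate: $\mathcal{Q}_1=\mathcal{S}_1$, so $\mathcal{P}$ is Frobenius iff $\ind(\mathfrak{g}_A(\mathcal{S}_1))=0$ iff $(\mathcal{S}_1,\varphi_{\mathcal{S}_1})$ is a toral-pair, and condition~2 is vacuous.

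For the inductive step I would iterate Theorem~\ref{lem:table} along the sequence. Writing $\delta_i\in\{0,1,2\}$ for the correction term attached to the rule used to build $\mathcal{Q}_i$ from $\mathcal{Q}_{i-1}$ and $\mathcal{S}_i$ --- so $\delta_i=0$ when that rule lies in $\{A_1,A_2,C,D_1,D_2,F\}$, $\delta_i=1$ for $\{B,E_1,E_2,G_1,G_2\}$, and $\delta_i=2$ for $H$, as read off Table~\ref{tab:gluingrulesind} --- telescoping yields
\begin{equation*}
\ind(\mathfrak{g}_A(\mathcal{P}))=\sum_{i=1}^{n}\ind(\mathfrak{g}_A(\mathcal{S}_i))+\sum_{i=2}^{n}\delta_i.
\end{equation*}
As part of the induction one should verify that each intermediate $\mathcal{Q}_i$ is itself toral, so that Theorem~\ref{lem:table} is applicable at every stage; this holds by Definition~\ref{def:toral}, since $\mathcal{S}_1=\mathcal{Q}_1\subset\cdots\subset\mathcal{Q}_i$ is a construction sequence for $\mathcal{Q}_i$.

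Finally, every summand on the right-hand side is nonnegative --- indices of Lie algebras are nonnegative, and each $\delta_i\ge 0$ by the table --- so $\ind(\mathfrak{g}_A(\mathcal{P}))=0$ if and only if $\ind(\mathfrak{g}_A(\mathcal{S}_i))=0$ for every $i$ and $\delta_i=0$ for every $i$. By the dichotomy above, the first family of equalities is exactly condition~1 (every pair is a toral-pair), and $\delta_i=0$ for all $i$ is exactly condition~2 (every gluing rule used lies in $\{A_1,A_2,C,D_1,D_2,F\}$), which completes both directions. I do not anticipate a genuine obstacle here: the substance is carried entirely by Theorem~\ref{lem:table}, and the only points requiring care are the toral-pair/contact-toral-pair index dichotomy and the bookkeeping that keeps each $\mathcal{Q}_i$ toral throughout the telescoping.
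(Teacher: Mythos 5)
Your proposal is correct and is essentially the argument the paper intends: the paper explicitly defers to Theorems 6--8 of the cited reference, noting that ``the proofs remain unchanged'' once Theorem~\ref{lem:table} is in hand, and that proof is exactly your telescoping of the index-increment table together with the observation that building blocks from toral-pairs have index $0$ while those from contact toral-pairs have index $1$. Your added care about the index dichotomy for the two kinds of pairs and about each intermediate $\mathcal{Q}_i$ being toral is exactly the right bookkeeping.
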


In \textbf{\cite{Binary}}, it is conjectured that all Frobenius posets are toral, i.e., are of the form described in Theorem~\ref{thm:Frob}. Since a type-A Lie poset algebra corresponding to a disjoint sum of two posets has nontrivial center,\footnote{Given two posets $\mathcal{P}$ and $\mathcal{Q}$, the \textit{disjoint sum} of $\mathcal{P}$ and $\mathcal{Q}$ is the poset $\mathcal{P}+\mathcal{Q}$ on the disjoint sum of $\mathcal{P}$ and $\mathcal{Q}$, where $s\preceq_{\mathcal{P}+\mathcal{Q}} t$ if either 
\begin{itemize}
    \item $s,t\in \mathcal{P}$ and $s\preceq_{\mathcal{P}} t$, or
    \item $s,t\in \mathcal{Q}$ and $s\preceq_{\mathcal{Q}} t$.
\end{itemize} The element $$|\mathcal{Q}|\sum_{i\in\mathcal{P}} E_{i,i}-|\mathcal{P}|\sum_{j\in\mathcal{Q}} E_{j,j}$$ is contained in the center of $\mathfrak{g}_A(\mathcal{P}+\mathcal{Q})$.} it follows from Theorem 7 of \textbf{\cite{SeriesA}} and Theorem 13 of \textbf{\cite{ContactLiePoset}} that a disconnected poset is contact if and only if it is a disjoint sum of exactly two Frobenius posets. Therefore, if the conjecture of \textbf{\cite{Binary}} is proven true, the following conjecture will also be established.

\begin{conj}
Let $\mathcal{P}$ be a disconnected poset. Then $\mathcal{P}$ is contact if and only if $\mathcal{P}$ is a disjoint sum of two Frobenius, toral posets.
\end{conj}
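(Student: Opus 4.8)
The plan is to obtain the conjecture by combining the biconditional recorded in the preceding paragraph --- a disconnected poset $\mathcal{P}$ is contact if and only if it is a disjoint sum of exactly two Frobenius posets --- with the conjecture of \textbf{\cite{Binary}} that every Frobenius poset is toral. The only link one needs between these is the fact that a Frobenius poset is necessarily connected: if $\mathcal{P}=\mathcal{P}_1+\mathcal{P}_2$ for nonempty $\mathcal{P}_1,\mathcal{P}_2$, then $\mathfrak{g}_A(\mathcal{P})$ contains the nonzero central element $|\mathcal{P}_2|\sum_{i\in\mathcal{P}_1}E_{i,i}-|\mathcal{P}_1|\sum_{j\in\mathcal{P}_2}E_{j,j}$, and since a central element lies in $\ker(d\varphi)$ for every $\varphi\in(\mathfrak{g}_A(\mathcal{P}))^*$ we get $\ind(\mathfrak{g}_A(\mathcal{P}))\ge 1$, so $\mathcal{P}$ is not Frobenius. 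Hence the connected components of a poset are exactly its indecomposable disjoint-sum factors, and in particular a disjoint sum of two Frobenius posets is automatically disconnected with exactly two components.

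Granting this, the two directions are immediate. If $\mathcal{P}$ is disconnected and contact, the biconditional writes $\mathcal{P}=\mathcal{P}_1+\mathcal{P}_2$ with $\mathcal{P}_1,\mathcal{P}_2$ Frobenius, and the \textbf{\cite{Binary}} conjecture makes each $\mathcal{P}_i$ toral, so $\mathcal{P}$ is a disjoint sum of two Frobenius, toral posets. Conversely, if $\mathcal{P}=\mathcal{P}_1+\mathcal{P}_2$ with both $\mathcal{P}_i$ Frobenius and toral, then each $\mathcal{P}_i$ is connected by the paragraph above, so $\mathcal{P}$ is disconnected with exactly two connected components, both Frobenius, and the biconditional makes $\mathcal{P}$ contact.

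It is also worth spelling out a proof of the biconditional rather than leaving it a black box. From $\mathfrak{g}(\mathcal{P}_1+\cdots+\mathcal{P}_m)=\bigoplus_i\mathfrak{g}(\mathcal{P}_i)$ together with $\mathfrak{g}(\mathcal{Q})=\mathfrak{g}_A(\mathcal{Q})\oplus\mathbb{C}I$ for connected $\mathcal{Q}$ (with $I$ central), one gets $\ind(\mathfrak{g}_A(\mathcal{P}_1+\cdots+\mathcal{P}_m))=\sum_i\ind(\mathfrak{g}_A(\mathcal{P}_i))+(m-1)$ (cf. Theorem 7 of \textbf{\cite{SeriesA}}); since a contact algebra has index one, this forces $m=2$ with both $\mathcal{P}_i$ Frobenius. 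For the reverse implication, when $m=2$ and the $\mathcal{P}_i$ are Frobenius one has $\mathfrak{g}_A(\mathcal{P}_1+\mathcal{P}_2)=\mathfrak{g}_A(\mathcal{P}_1)\oplus\mathfrak{g}_A(\mathcal{P}_2)\oplus\mathbb{C}z$ with $z$ the central element above; choosing Frobenius one-forms $\varphi_i$ on $\mathfrak{g}_A(\mathcal{P}_i)$ and putting $\varphi=\varphi_1+\varphi_2+z^*$ with $z^*(z)=1$, the form $d\varphi$ is block-nondegenerate on $\mathfrak{g}_A(\mathcal{P}_1)\oplus\mathfrak{g}_A(\mathcal{P}_2)$ and kills $z$, so $\ker(d\varphi)=\mathbb{C}z$ and $\varphi(z)\ne 0$, whence $\varphi$ is contact by Lemma~\ref{lem:kernel} (cf. Theorem 13 of \textbf{\cite{ContactLiePoset}}).

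The main obstacle is not in this packaging but in the input: ``every Frobenius poset is toral'' is the open conjecture of \textbf{\cite{Binary}}, so the argument above establishes the present conjecture only conditionally, and an unconditional proof would require the full classification of Frobenius, type-A Lie poset algebras. A minor caveat is the status of one-element components --- a single point is Frobenius but not toral under Definition~\ref{def:toral} --- so the statement should be read either with the convention that an isolated point counts as a degenerate toral poset or restricted to posets all of whose connected components have at least two elements.
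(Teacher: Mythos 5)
Your argument is exactly the one the paper itself gives: the statement is left as a conjecture precisely because it follows from the biconditional (Theorem 7 of \textbf{\cite{SeriesA}} plus Theorem 13 of \textbf{\cite{ContactLiePoset}}) only conditionally on the open conjecture of \textbf{\cite{Binary}} that every Frobenius poset is toral, and you correctly reproduce that reduction, flesh out the biconditional, and honestly flag the conditionality. Your caveat about one-element components (Frobenius but not toral, since $|Ext(\mathcal{P})|=1$ violates \textbf{(P1)}) is a legitimate edge case the paper does not address, but otherwise this matches the paper's reasoning.
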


\begin{Ex}\label{ex:disjcontact}
The poset $\mathcal{P}$ whose Hasse diagram is illustrated in Figure~\ref{fig:discont} below is a disjoint sum of Frobenius, toral posets and, thus, is contact.
\bigskip

\begin{center}
\fbox{\begin{minipage}{45em}
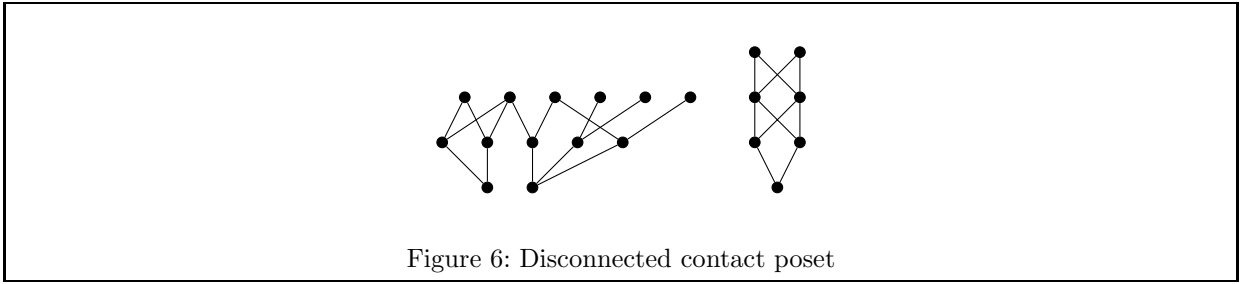
\begin{figure}[H]
$$\begin{tikzpicture}[scale=0.6]
\node [circle, draw = black, fill = black, inner sep = 0.5mm] (v46) at (17,1.5) {};
\node [circle, draw = black, fill = black, inner sep = 0.5mm] (v36) at (17.5,2.5) {};
\node [circle, draw = black, fill = black, inner sep = 0.5mm] (v37) at (18.5,2.5) {};
\node [circle, draw = black, fill = black, inner sep = 0.5mm] (v40) at (19.5,2.5) {};
\node [circle, draw = black, fill = black, inner sep = 0.5mm] (v42) at (20.5,2.5) {};
\node [circle, draw = black, fill = black, inner sep = 0.5mm] (v43) at (21.5,2.5) {};
\node [circle, draw = black, fill = black, inner sep = 0.5mm] (v45) at (22.5,2.5) {};
\node [circle, draw = black, fill = black, inner sep = 0.5mm] (v35) at (18,1.5) {};
\node [circle, draw = black, fill = black, inner sep = 0.5mm] (v39) at (19,1.5) {};
\node [circle, draw = black, fill = black, inner sep = 0.5mm] (v41) at (20,1.5) {};
\node [circle, draw = black, fill = black, inner sep = 0.5mm] (v44) at (21,1.5) {};
\node [circle, draw = black, fill = black, inner sep = 0.5mm] (v34) at (18,0.5) {};
\node [circle, draw = black, fill = black, inner sep = 0.5mm] (v38) at (19,0.5) {};
\draw (v34) -- (v35) -- (v36);
\draw (v35) -- (v37);
\draw (v38) -- (v39) -- (v37);
\draw (v39) -- (v40);
\draw (v38) -- (v41) -- (v42);
\draw (v41) -- (v43);
\draw (v38) -- (v44) -- (v40);
\draw (v44) -- (v45);
\draw (v34) -- (v46) -- (v36);
\draw (v46) -- (v37);
\end{tikzpicture}\quad\quad \begin{tikzpicture}[scale=0.6]
\node [circle, draw = black, fill = black, inner sep = 0.5mm] (v1) at (0,0) {};
\node [circle, draw = black, fill = black, inner sep = 0.5mm] (v2) at (-0.5, 1) {};
\node [circle, draw = black, fill = black, inner sep = 0.5mm] (v3) at (0.5, 1) {};
\node [circle, draw = black, fill = black, inner sep = 0.5mm] (v4) at (-0.5, 2) {};
\node [circle, draw = black, fill = black, inner sep = 0.5mm] (v5) at (0.5, 2) {};
\node [circle, draw = black, fill = black, inner sep = 0.5mm] (v6) at (-0.5, 3) {};
\node [circle, draw = black, fill = black, inner sep = 0.5mm] (v7) at (0.5, 3) {};
\draw (v1)--(v2)--(v5)--(v7)--(v4)--(v3)--(v1);
\draw (v3)--(v5)--(v6)--(v4)--(v2);
\end{tikzpicture}$$
\caption{Disconnected contact poset}\label{fig:discont}
\end{figure}
\end{minipage}}
\end{center}
\end{Ex}
\bigskip

Moving forward, our focus is characterizing contact, toral posets (which are necessarily connected). We proceed by first giving some necessary conditions for a type-A Lie poset algebra associated to a toral poset to admit a contact form, and in Section~\ref{sec:contactforms}, we show that such conditions are also sufficient. The following lemma places an upper bound on the number of contact toral-pairs that can appear in a construction sequence of a contact, toral poset.

\begin{lemma}\label{lem:additive}
Let $\mathcal{P}$ be a toral poset constructed from the \textup(contact\textup) toral-pairs $\{(\mathcal{S}_i,\varphi_{\mathcal{S}_i})\}_{i=1}^n$ with construction sequence $\mathcal{S}_1=\mathcal{Q}_1\subset\mathcal{Q}_2\subset\dots\mathcal{Q}_n=\mathcal{P}.$ If $\mathcal{P}$ is contact, then $(\mathcal{S}_i,\varphi_{\mathcal{S}_i})$ is a contact toral-pair for at most one value of $i\in\{1,\dots,n\}.$
\end{lemma}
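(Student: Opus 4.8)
The plan is to read the statement off the index formula for toral posets. Since $\mathfrak{g}_A(\mathcal{P})$ is contact it has index one, so $\ind(\mathfrak{g}_A(\mathcal{P}))=1$. Writing $|C(\mathcal{P})|$ for the number of indices $i\in\{1,\dots,n\}$ for which $(\mathcal{S}_i,\varphi_{\mathcal{S}_i})$ is a contact toral-pair, Theorem~\ref{thm:indform} applied to the given construction sequence yields
$$1=\ind(\mathfrak{g}_A(\mathcal{P}))=\bigl(|Rel_E(\mathcal{P})|-|Ext(\mathcal{P})|+1\bigr)+|C(\mathcal{P})|.$$
So it suffices to show that the ``defect'' $|Rel_E(\mathcal{P})|-|Ext(\mathcal{P})|+1$ is a nonnegative integer: then the displayed identity expresses $1$ as a sum of two nonnegative integers, forcing $|C(\mathcal{P})|\le 1$, which is exactly the assertion that at most one of the building blocks is a contact toral-pair.

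For the nonnegativity of the defect I would invoke Theorem~\ref{thm:wedge}: $\Sigma(\mathcal{P})$ is homotopic to a wedge sum of $|Rel_E(\mathcal{P})|-|Ext(\mathcal{P})|+1$ one-spheres, and the number of spheres in a wedge sum is a nonnegative integer, whence $|Rel_E(\mathcal{P})|-|Ext(\mathcal{P})|+1\ge 0$. (Alternatively, one can prove this inductively along the construction sequence using Theorems~\ref{thm:indform} and~\ref{lem:table}: each toral or contact toral-pair poset $\mathcal{S}$ has defect $\ind(\mathfrak{g}_A(\mathcal{S}))-|C(\mathcal{S})|=0$ by Theorem~\ref{thm:indform} applied to $\mathcal{S}$ alone, and each gluing rule changes the defect by the nonnegative amount recorded in Table~\ref{tab:gluingrulesind}, so the defect is $\ge 0$ at every stage.) Combining either argument with the displayed identity gives $|C(\mathcal{P})|\le 1$.

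There is no substantive obstacle here — the content is entirely carried by the earlier results — and the only point needing care is precisely the nonnegativity of the defect: the index formula by itself would permit $|C(\mathcal{P})|$ to exceed one if that quantity were negative, so Theorem~\ref{thm:wedge} (or the inductive variant) is doing the real work. As a byproduct worth noting, since every term on the right-hand side of the identity in Theorem~\ref{thm:indform} other than $|C(\mathcal{P})|$ depends only on $\mathcal{P}$, the value $|C(\mathcal{P})|$ is independent of the chosen construction sequence, so the conclusion in fact holds for every construction sequence of $\mathcal{P}$.
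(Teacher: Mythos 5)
Your argument is correct, and it reaches the conclusion by a slightly different route than the paper. The paper's proof is a one-liner: it invokes Theorem~\ref{lem:table} directly, the point being that the index is superadditive along the construction sequence, each toral-pair contributes $\ind(\mathfrak{g}_A(\mathcal{S}_i))=0$ while each contact toral-pair contributes $\ind(\mathfrak{g}_A(\mathcal{S}_i))=1$, and a contact $\mathcal{P}$ has index one, so at most one summand of $1$ can appear. You instead pass through the closed-form index formula of Theorem~\ref{thm:indform} and then must separately certify that the ``defect'' $|Rel_E(\mathcal{P})|-|Ext(\mathcal{P})|+1$ is nonnegative, which you do via Theorem~\ref{thm:wedge}; your parenthetical inductive variant (defect starts at $0$ for a single building block and each gluing rule changes it by the nonnegative correction term $\epsilon_i\in\{0,1,2\}$ from Table~\ref{tab:gluingrulesind}) is essentially the paper's argument repackaged. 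You are right that the nonnegativity of the defect is the only point where real care is needed in your main route --- the index formula alone would not rule out $|C(\mathcal{P})|>1$ --- and your observation that $|C(\mathcal{P})|$ is therefore independent of the chosen construction sequence is a genuine (if minor) bonus that the paper's direct argument does not surface.
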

\begin{proof}
Follows immediately from Theorem~\ref{lem:table} and the fact that contact Lie algebras have index one.
\end{proof}

\noindent Just as Lemma~\ref{lem:additive} above placed restrictions on the building blocks of a contact, toral poset, Lemma~\ref{lem:noFrobglue} below restricts the set of gluing rules that may be used to construct a contact, toral poset. First, we recall the following result from \textbf{\cite{ContactLiePoset}}.

\begin{theorem}[\textbf{\cite{ContactLiePoset}}, Theorem 15]\label{lem:cycle}
If the Hasse diagram of $\mathcal{P}_{Ext(\mathcal{P})}$ contains a cycle, then $\mathfrak{g}_A(\mathcal{P})$ is not contact.
\end{theorem}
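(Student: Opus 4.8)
The plan is to argue by contradiction, using Lemma~\ref{lem:kernel}. Suppose $\mathfrak{g}_A(\mathcal{P})$ is contact with contact form $\varphi$. Since contact Lie algebras have index one, $\ker_A(d\varphi)$ is a line, say $\ker_A(d\varphi)=\mathrm{span}\{B\}$ with $\varphi(B)\neq 0$. I will use the hypothesized cycle in the Hasse diagram of $\mathcal{P}_{Ext(\mathcal{P})}$ to produce a \emph{nonzero} element $B'\in\ker_A(d\varphi)$ with $\varphi(B')=0$; since $B'$ must then be a nonzero scalar multiple of $B$, this is a contradiction.

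First I would isolate the abelian ideal $\mathfrak{a}=\mathrm{span}\{E_{p,q}: p\ \text{minimal in }\mathcal{P},\ q\ \text{maximal in }\mathcal{P},\ p\prec q\}$. Writing $\mathfrak{g}_A(\mathcal{P})=\mathfrak{h}\oplus\mathfrak{n}$ for the diagonal (Cartan) and strictly upper (nilpotent) parts, the key computation is that $[E_{r,s},E_{p,q}]=\delta_{s,p}E_{r,q}-\delta_{q,r}E_{p,s}=0$ for every basis root vector $E_{r,s}$ and every $E_{p,q}\in\mathfrak{a}$: $s=p$ would force the relation $r\prec p$ with $p$ minimal, and $q=r$ would force $q\prec s$ with $q$ maximal. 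Hence $[\mathfrak{n},\mathfrak{a}]=0$ and $[\mathfrak{h},\mathfrak{a}]\subseteq\mathfrak{a}$. Consequently, for $f=\sum_e a_eE_e\in\mathfrak{a}$, where $e$ ranges over the edges of the Hasse diagram $G$ of $\mathcal{P}_{Ext(\mathcal{P})}$, $E_e$ is the associated root vector, and $\alpha_e\in\mathfrak{h}^*$ is its weight, we get $d\varphi(x,f)=-\varphi([x,f])=0$ for all $x\in\mathfrak{n}$, whereas $d\varphi(H,f)=-\sum_e a_e\varphi(E_e)\alpha_e(H)$ for $H\in\mathfrak{h}$. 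So $f\in\ker_A(d\varphi)$ if and only if $\sum_e a_e\varphi(E_e)\alpha_e=0$ in $\mathfrak{h}^*$ — a purely linear condition.

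Next I would orient each edge $e=\{p,q\}$ of the bipartite graph $G$ from its minimal endpoint $p$ to its maximal endpoint $q$, so $\alpha_e=\epsilon_p-\epsilon_q$; then the linear relations $\sum_e c_e\alpha_e=0$ are exactly the circulations on $G$, i.e. the cycle space $Z_1(G)$. The hypothesized cycle $v_1,v_2,\dots,v_{2m}$ carries the circulation $c_{e_i}=(-1)^{i+1}$ on its edges $e_1,\dots,e_{2m}$ (the sign alternates because consecutive edges meet at a vertex that is alternately a source and a sink of the chosen orientation), whence $\sum_i c_{e_i}\alpha_{e_i}=0$ and $\sum_i c_{e_i}=0$. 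If $\varphi(E_{e_i})\neq 0$ for all $i$, put $B'=\sum_i \tfrac{c_{e_i}}{\varphi(E_{e_i})}E_{e_i}$: then $B'\neq 0$, its coefficients satisfy $a_{e_i}\varphi(E_{e_i})=c_{e_i}$ so $B'\in\ker_A(d\varphi)$ by the previous paragraph, and $\varphi(B')=\sum_i c_{e_i}=0$. If instead $\varphi(E_{e_j})=0$ for some $j$, take $B'=E_{e_j}$: it is nonzero, it lies in $\ker_A(d\varphi)$ (its coefficient vector satisfies $a_{e_j}\varphi(E_{e_j})\alpha_{e_j}=0$ trivially), and $\varphi(B')=0$. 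Either way we reach the contradiction.

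I expect the conceptual crux to be the bracket identity of the second paragraph — that root vectors attached to a (minimal, maximal) pair are central modulo $\mathfrak{h}$ — since this is precisely what collapses membership in $\ker_A(d\varphi)$ to a linear condition on weights and reduces the statement to the elementary fact that a cycle in a bipartite graph yields a $\pm1$ circulation with zero coordinate sum. The only bookkeeping requiring care (none of it serious) is fixing edge orientations and signs so that $\sum_i c_{e_i}=0$ genuinely holds, confirming that $[E_{r,s},E_{p,q}]=0$ admits no exceptions — including when $\mathcal{P}$ is disconnected, where $\mathfrak{a}$ is still an abelian ideal — and the harmless case split on whether $\varphi$ annihilates one of the cycle's root vectors.
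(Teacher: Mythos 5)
The paper does not actually prove this statement; it is quoted as Theorem 15 of \cite{ContactLiePoset}, so there is no internal argument to compare yours against. Judged on its own, your proof is correct. The two load-bearing steps both hold: the bracket identity $[E_{r,s},E_{p,q}]=\delta_{s,p}E_{r,q}-\delta_{q,r}E_{p,s}=0$ for $r\prec s$ and $(p,q)\in Rel_E(\mathcal{P})$ is exactly right ($s=p$ would violate minimality of $p$, and $r=q$ would violate maximality of $q$), so membership of an element of $\mathfrak{a}$ in $\ker_A(d\varphi)$ does reduce to the single linear condition $\sum_e a_e\varphi(E_e)\alpha_e=0$; and since the Hasse diagram of $\mathcal{P}_{Ext(\mathcal{P})}$ is bipartite between minimal and maximal elements, any cycle is even and carries the alternating $\pm1$ circulation, giving both $\sum_i c_{e_i}\alpha_{e_i}=0$ and $\sum_i c_{e_i}=0$. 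The resulting nonzero $B'\in\ker_A(d\varphi)$ with $\varphi(B')=0$ contradicts Lemma~\ref{lem:kernel}, as you say. Two cosmetic remarks: your ``if and only if'' is literally a statement about vanishing on all of $\mathfrak{h}$, whereas $\ker_A$ only requires vanishing on traceless diagonal matrices --- the two agree because every $\alpha_e$ has coordinate sum zero, and in any case you only use the trivial ``if'' direction; and your Case B needs no cycle at all, since it shows the stronger standalone fact that a contact form on $\mathfrak{g}_A(\mathcal{P})$ must be nonzero on every $E_{p,q}$ with $(p,q)\in Rel_E(\mathcal{P})$.
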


\begin{Ex}\label{ex:cycle}
Consider the poset $\mathcal{P}=\{1,2,3,4,5,6,7\}$ with relations $1\prec 3,4\prec 6$; $2\prec 4\prec 7$; and $2\prec 5$ whose Hasse diagram is illustrated in Figure \ref{fig:cycles} below \textup(left\textup). Since the Hasse diagram of $\mathcal{P}_{Ext(\mathcal{P})}$ \textup(Figure~\ref{fig:cycles} \textup(right\textup)\textup) contains the cycle $(1,7,2,6),$ it follows from Theorem~\ref{lem:cycle} that $\mathcal{P}$ is not contact.
\bigskip

\begin{center}
\fbox{\begin{minipage}{45em}
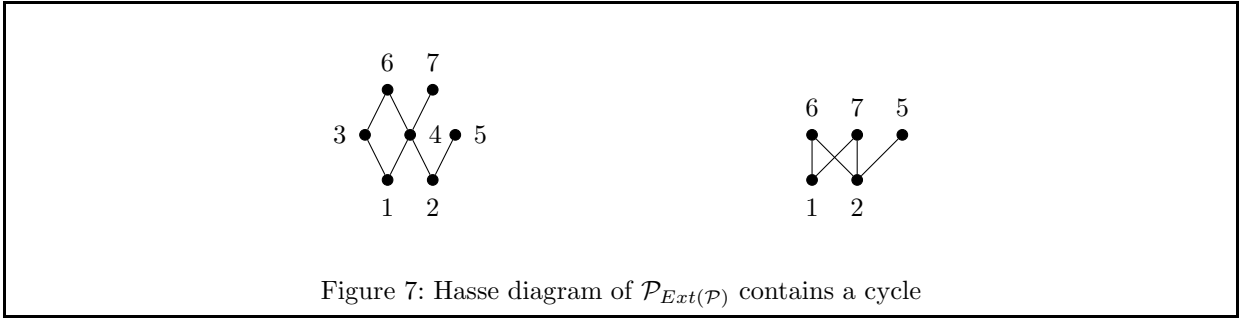
\begin{figure}[H]\label{Hasse}
$$\begin{tikzpicture}[scale=0.6]
\node (1) at (0,0) [circle, draw = black, fill = black, inner sep = 0.5mm] {};
\node (2) at (1,0) [circle,draw=black, fill=black, inner sep=0.5mm]{};
\node (3) at (-0.5,1) [circle,draw=black, fill=black, inner sep=0.5mm]{};
\node (4) at (0.5,1) [circle,draw=black, fill=black, inner sep=0.5mm]{};
\node (5) at (0,2) [circle,draw=black, fill=black, inner sep=0.5mm]{};
\node (6) at (1,2) [circle,draw=black, fill=black, inner sep=0.5mm]{};
\node (7) at (1.5, 1) [circle,draw=black, fill=black, inner sep=0.5mm]{};
\node[label=below:{$1$}] at (0,0){};
\node[label=below:{$2$}] at (1,0){};
\node[label=left:{$3$}] at (-0.5,1){};
\node[label=right:{$4$}] at (0.5,1){};
\node[label=right:{$5$}] at (1.5,1){};
\node[label=above:{$6$}] at (0,2){};
\node[label=above:{$7$}] at (1,2){};
\draw (1)--(3);
\draw (1)--(4);
\draw (3)--(5);
\draw (4)--(5);
\draw (4)--(6);
\draw (2)--(4);
\draw (2)--(7);
\end{tikzpicture}
\hspace{4cm}
\begin{tikzpicture}[scale=0.6]
\node (1) at (0,0) [circle, draw = black, fill = black, inner sep = 0.5mm] {};
\node (2) at (1,0) [circle,draw=black, fill=black, inner sep=0.5mm]{};
\node (3) at (0,1) [circle,draw=black, fill=black, inner sep=0.5mm]{};
\node (4) at (1,1) [circle,draw=black, fill=black, inner sep=0.5mm]{};
\node (5) at (2, 1) [circle,draw=black, fill=black, inner sep=0.5mm]{};
\node[label=above:{$7$}] at (1,1){};
\node[label=above:{$6$}] at (0,1){};
\node[label=above:{$5$}] at (2,1){};
\node[label=below:{$2$}] at (1,0){};
\node[label=below:{$1$}] at (0,0){};
\draw (1)--(3);
\draw (1)--(4);
\draw (2)--(3);
\draw (2)--(4);
\draw (2)--(5);
\end{tikzpicture}$$
    \caption{Hasse diagram of $\mathcal{P}_{Ext(\mathcal{P})}$ contains a cycle}
    \label{fig:cycles}
\end{figure}
\end{minipage}}
\end{center}
\end{Ex}
\bigskip

\begin{lemma}\label{lem:noFrobglue}
Let $\mathcal{P}$ be a toral poset constructed from the \textup(contact\textup) toral-pairs $\{(\mathcal{S}_i,\varphi_{\mathcal{S}_i})\}_{i=1}^n$, where $(\mathcal{S}_i,\varphi_{\mathcal{S}_i})$ is a contact toral-pair for at most one value of $i\in\{1,\dots,n\}.$ If $\mathcal{P}$ is contact, then $\mathcal{Q}_i$ is formed from $\mathcal{Q}_{i-1}$ and $\mathcal{S}_i$ by applying rules from the set $\{A_1,A_2,C,D_1,D_2,F\}$, for $i=2,\dots,n.$
\end{lemma}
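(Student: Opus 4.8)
The plan is to argue by contradiction, combining the index-additivity result (Theorem~\ref{lem:table}) with the cycle obstruction (Theorem~\ref{lem:cycle}). Suppose $\mathcal{P}$ is contact but some $\mathcal{Q}_i$ is formed from $\mathcal{Q}_{i-1}$ and $\mathcal{S}_i$ using a rule outside $\{A_1,A_2,C,D_1,D_2,F\}$, i.e.\ using one of $\{B,E_1,E_2,G_1,G_2,H\}$. First I would dispose of the ``strictly index-raising'' rules $B, E_1, E_2, G_1, G_2, H$ directly via a counting argument. By hypothesis there is at most one contact toral-pair among the $\mathcal{S}_j$, so $\ind(\mathfrak{g}_A(\mathcal{S}_j))=0$ for all but at most one $j$ (where it equals $1$). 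Reading off Table~\ref{tab:gluingrulesind}, each allowed rule contributes $\ind(\mathfrak{g}_A(\mathcal{S}_j))$ to $\ind(\mathfrak{g}_A(\mathcal{Q}_j))-\ind(\mathfrak{g}_A(\mathcal{Q}_{j-1}))$, while each of $B,E_1,E_2,G_1,G_2$ contributes $\ind(\mathfrak{g}_A(\mathcal{S}_j))+1$ and $H$ contributes $\ind(\mathfrak{g}_A(\mathcal{S}_j))+2$. Summing telescopically from $i=2$ to $n$ and using $\ind(\mathfrak{g}_A(\mathcal{Q}_1))=\ind(\mathfrak{g}_A(\mathcal{S}_1))\in\{0,1\}$, the total index of $\mathfrak{g}_A(\mathcal{P})$ is at least (number of contact toral-pairs) $+$ (number of gluing steps using a rule from $\{B,E_1,E_2,G_1,G_2\}$) $+$ $2\times$(number of steps using $H$). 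For $\mathcal{P}$ to be contact this sum must equal $1$; hence there can be no $H$-steps, and the number of $\{B,E_1,E_2,G_1,G_2\}$-steps plus the number of contact toral-pairs is exactly $1$. In particular, if any step uses a rule from $\{B,E_1,E_2,G_1,G_2,H\}$, then there are no contact toral-pairs, exactly one such step, and it uses a rule from $\{B,E_1,E_2,G_1,G_2\}$; moreover all other steps use rules from $\{A_1,A_2,C,D_1,D_2,F\}$ and all building blocks are (non-contact) toral-pairs — so $\ind(\mathfrak{g}_A(\mathcal{P}))=1$ and we are not yet done: we must rule out this remaining configuration.

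To eliminate the lone $\{B,E_1,E_2,G_1,G_2\}$-step, I would invoke Theorem~\ref{lem:cycle}: it suffices to show that in each of these five cases the Hasse diagram of $\mathcal{P}_{Ext(\mathcal{P})}$ contains a cycle. The key observation is that each of these five rules, when applied to build $\mathcal{Q}_i$ from $\mathcal{Q}_{i-1}$ and $\mathcal{S}_i$, identifies extremal elements of $\mathcal{S}_i$ with extremal elements of $\mathcal{Q}_{i-1}$ in a way that creates a new comparability among extremal elements without merging the two ``sides'' — producing a cycle in the comparability structure on $Ext$. Concretely, rule $B$ identifies the two non-$c$ extremal elements $a_1,a_2$ of $\mathcal{S}_i$ with $y,z\in Ext(\mathcal{Q}_{i-1})$ while keeping $c$ unidentified; since $\mathcal{S}_i$ already contains the edges $c\prec a_1$, $c\prec a_2$ (by property \textbf{(F3)} for a toral-pair, or \textbf{(CF4)}) and $\mathcal{Q}_{i-1}$ is connected with $y,z$ joined through it, one gets a cycle through $c$ (in $\mathcal{S}_i$) and back through $\mathcal{Q}_{i-1}$. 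Rules $E_1,E_2$ identify $c=x$ and one of $a_1,a_2$ with a non-comparable extremal element, again closing up a cycle via the $\mathcal{S}_i$-edges out of $c$ together with the path in $\mathcal{Q}_{i-1}$; and $G_1,G_2$ do the same with both $a_1,a_2$ identified (one comparably, one not). The care needed is to ensure the resulting cycle lies entirely within $\mathcal{P}_{Ext(\mathcal{P})}$ — i.e.\ that the vertices involved remain extremal in $\mathcal{P}$ and the relations survive as relations in $\mathcal{P}$ (they are not "filled in" by intermediate elements in a way that destroys the edge in the Hasse diagram of the \emph{induced} subposet). This should follow because $\mathcal{P}_{Ext(\mathcal{P})}$ has height at most one (all its elements are extremal in $\mathcal{P}$, hence minimal or maximal), so its comparability graph and Hasse diagram coincide, and any comparability among extremal elements is automatically an edge.

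The main obstacle I anticipate is precisely this last bookkeeping: verifying, rule by rule, that the gluing operation genuinely introduces a cycle among elements that are still extremal in the final poset $\mathcal{P}$, rather than among elements that become internal (in $O_{\varphi}$) after further gluing steps, or that a vertex one expects to be a source/sink in $\mathcal{S}_i$ fails to be extremal in $\mathcal{Q}_{i-1}$ or in $\mathcal{P}$. I would handle this by tracking, through the construction sequence, which elements of $\mathcal{P}$ are extremal: an element identified in a gluing step remains extremal unless a later step attaches something "above a minimal" or "below a maximal" element, and the structure of rules $\{A_1,A_2,C,D_1,D_2,F\}$ (the only ones used afterward) is mild enough that the cycle-producing vertices from the bad step stay extremal. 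An alternative, and perhaps cleaner, route to the same conclusion is to cite Theorem~\ref{thm:indform}: for a contact toral poset one needs $|Rel_E(\mathcal{P})|-|Ext(\mathcal{P})|+|C(\mathcal{P})|+1=1$, i.e.\ $|Rel_E(\mathcal{P})|-|Ext(\mathcal{P})|+|C(\mathcal{P})|=0$; since for contact toral-pairs and toral-pairs alike one has $|Rel_E(\mathcal{S})|-|Ext(\mathcal{S})|+1=0$ (noted in the proof of Theorem~\ref{lem:table}) and the bad rules each increase $|Rel_E|-|Ext|$ over the sum of the pieces by at least $1$ (this is exactly why they bump the index), one can run the same accounting entirely in terms of $|Rel_E(\mathcal{P})|-|Ext(\mathcal{P})|$ and $|C(\mathcal{P})|$, then feed the conclusion "$|Rel_E(\mathcal{P}_{Ext(\mathcal{P})})| > |Ext(\mathcal{P})|-1$" into Theorem~\ref{thm:wedge} / Theorem~\ref{lem:cycle} to extract the cycle. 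Either way, the contradiction with $\mathcal{P}$ being contact completes the proof.
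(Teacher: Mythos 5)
Your proposal is correct and follows essentially the same route as the paper's proof: the index table (Theorem~\ref{lem:table}) disposes of rule $H$ and shows the remaining bad rules force everything else to be Frobenius, and then for each of $B,E_1,E_2,G_1,G_2$ one exhibits a cycle in the Hasse diagram of $\mathcal{P}_{Ext(\mathcal{P})}$ by combining the new $Ext$-edges contributed by $\mathcal{S}_i$ with a path between the identified extremal elements inside the connected graph $\mathcal{Q}_{Ext(\mathcal{Q})}$, and concludes via Theorem~\ref{lem:cycle}. The persistence issue you flag is handled in the paper exactly as you suggest: gluing only identifies minimal with minimal and maximal with maximal, so the Hasse diagram of $\mathcal{Q}_{Ext(\mathcal{Q})}$ is a subgraph of that of $\mathcal{P}_{Ext(\mathcal{P})}$ and the cycle survives.
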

\begin{proof}
Let $\mathcal{P}$ be a poset with construction sequence $\mathcal{S}_1=\mathcal{Q}_1\subset\mathcal{Q}_2\subset\dots\subset\mathcal{Q}_n=\mathcal{P},$ and assume that $\mathcal{Q}_i$ is formed from $\mathcal{Q}_{i-1}$ and $\mathcal{S}_i$ by applying a rule from the set $\{B,E_1,E_2,G_1,G_2,H\}$, for some $i\in\{2,\dots,n\}$. Note immediately that if rule $H$ is used to construct $\mathcal{P}$, then $\ind(\mathfrak{g}_A(\mathcal{P}))\geq 2,$ so $\mathcal{P}$ is not contact. Note also that each of the rules $B,E_1,E_2,G_1,$ and $G_2$ contributes $\ind(\mathfrak{g}_A(\mathcal{S}_i))+1$ to the index of $\mathfrak{g}_A(\mathcal{P})$, so if any $\mathcal{S}_i$ or $\mathcal{Q}_i$ is not Frobenius, then $\mathcal{P}$ is not contact. To ease notation without loss of generality, let $\mathcal{Q}=\mathcal{Q}_{i-1},$ $\mathcal{S}=\mathcal{S}_i,$ and $\mathcal{P}=\mathcal{Q}_i.$ 

Proceeding, we will show that applying each of rules $B,E_1,E_2,G_1,$ and $G_2$ to the posets $\mathcal{Q}$ and $\mathcal{S}$ results in a poset $\mathcal{P}$ with a cycle in the Hasse diagram of $\mathcal{P}_{Ext(\mathcal{P})}.$ If $|Ext(\mathcal{S})|=2,$ then let $Ext(\mathcal{S})=\{a_1,c\}$ with $c\prec a_1,$ and if $|Ext(\mathcal{S})|=3,$ then let $Ext(\mathcal{S})=\{a_1,a_2,c\}$ with $a_1,a_2\prec c$ or $c\prec a_1,a_2.$ If $|Ext(\mathcal{Q})|=2,$ then let $Ext(\mathcal{Q})=\{x,y\},$ and if $|Ext(\mathcal{Q})|>2,$ then let $\{x,y,z\}\subseteq Ext(\mathcal{Q}).$ As above, we assume that if $c,a_1,$ or $a_2$ are identified with elements of $\mathcal{Q}$, then those elements are $x,y$, and $z$, respectively. Since $\mathcal{Q}$ is connected, $\mathcal{Q}_{Ext(\mathcal{Q})}$ is also connected. Thus, there exists a path $P_{xy}$ from $x$ to $y$ in the Hasse diagram of $\mathcal{Q}_{Ext(\mathcal{Q})},$ and if $z\in Ext(\mathcal{Q}),$ then there exist paths $P_{yz}$ and $P_{xz}$ from $y$ to $z$ and from $x$ to $z,$ respectively, in the Hasse diagram of $\mathcal{Q}_{Ext(\mathcal{Q})}.$ Moreover, the Hasse diagrem of $\mathcal{Q}_{Ext(\mathcal{Q})}$ is a subgraph of the Hasse diagram of $\mathcal{P}_{Ext(\mathcal{P})},$ so $P_{xy}, P_{yz},$ and $P_{xz}$ are also paths in $\mathcal{P}_{Ext(\mathcal{P})}.$ Three cases arise.

\bigskip
\noindent
\textbf{Case 1:} $\mathcal{P}$ is formed by adjoining $\mathcal{S}$ to $\mathcal{Q}$ using rule $B.$ Combining the edges of $P_{yz}$ with the edges $\{y,c\}$ and $\{c,z\}$ yields a cycle in the Hasse diagram of $\mathcal{P}_{Ext(\mathcal{P})}.$

\bigskip
\noindent
\textbf{Case 2:} $\mathcal{P}$ is formed by adjoining $\mathcal{S}$ to $\mathcal{Q}$ using rule $E_1$ or $G_2.$ Combining the edges of $P_{xy}$ with the edge $\{x,y\}$ yields a cycle in the Hasse diagram of $\mathcal{P}_{Ext(\mathcal{P})}.$

\bigskip
\noindent
\textbf{Case 3:} $\mathcal{P}$ is formed by adjoining $\mathcal{S}$ to $\mathcal{Q}$ using rule $E_2$ or $G_1.$ Combining the edges of $P_{xz}$ with the edge $\{x,z\}$ yields a cycle in the Hasse diagram of $\mathcal{P}_{Ext(\mathcal{P})}.$
\bigskip

\noindent
In each case, the Hasse diagram $\mathcal{P}_{Ext(\mathcal{P})}$ contains a cycle, so $\mathfrak{g}_A(\mathcal{P})$ is not contact by Theorem~\ref{lem:cycle}. The result follows.
\end{proof}

\noindent Combining the formulas from Theorem~\ref{lem:table} with Lemmas~\ref{lem:additive} and \ref{lem:noFrobglue}, note that a contact, toral poset cannot be constructed using only toral-pairs; that is, at least one building block must be contact. Thus, we are led to the following theorem, which states that the upper bound of Lemma~\ref{lem:additive} is exact.

\begin{theorem}
Let $\mathcal{P}$ be a toral poset constructed from the \textup(contact\textup) toral pairs $\{(\mathcal{S}_i,\varphi_{\mathcal{S}_i})\}_{i=1}^n.$ If $\mathcal{P}$ is contact, then there is exactly one $i\in\{1,\dots,n\}$ such that $(\mathcal{S}_i,\varphi_{\mathcal{S}_i})$ is a contact toral-pair.
\end{theorem}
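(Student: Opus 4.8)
The plan is to obtain the statement by pairing the upper bound already supplied by Lemma~\ref{lem:additive} with a matching lower bound extracted from the index bookkeeping of Theorem~\ref{lem:table} (equivalently, from the Frobenius characterization of Theorem~\ref{thm:Frob}). Since Lemma~\ref{lem:additive} gives ``at most one,'' the only real content is to rule out the possibility that \emph{none} of the building blocks $(\mathcal{S}_i,\varphi_{\mathcal{S}_i})$ is a contact toral-pair.

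First I would argue by contradiction: suppose $\mathcal{P}$ is contact but $(\mathcal{S}_i,\varphi_{\mathcal{S}_i})$ is an (ordinary) toral-pair for every $i\in\{1,\dots,n\}$. By Definition~\ref{def:toralpair}, each $\mathcal{S}_i$ is then a Frobenius poset, so $\ind(\mathfrak{g}_A(\mathcal{S}_i))=0$ for all $i$. The construction sequence contains zero — hence at most one — contact toral-pairs, so Lemma~\ref{lem:noFrobglue} applies and forces every $\mathcal{Q}_i$ to be formed from $\mathcal{Q}_{i-1}$ and $\mathcal{S}_i$ by a rule from $\{A_1,A_2,C,D_1,D_2,F\}$, for $i=2,\dots,n$.

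Next I would read off Table~\ref{tab:gluingrulesind}: for each rule in $\{A_1,A_2,C,D_1,D_2,F\}$ one has $\ind(\mathfrak{g}_A(\mathcal{Q}_i))-\ind(\mathfrak{g}_A(\mathcal{Q}_{i-1}))=\ind(\mathfrak{g}_A(\mathcal{S}_i))=0$; telescoping over $i=2,\dots,n$ together with $\mathcal{Q}_1=\mathcal{S}_1$ gives $\ind(\mathfrak{g}_A(\mathcal{P}))=\ind(\mathfrak{g}_A(\mathcal{S}_1))=0$, so $\mathcal{P}$ is Frobenius. (Alternatively, observe that conditions 1 and 2 of Theorem~\ref{thm:Frob} are now both met, giving the same conclusion directly.) This contradicts the fact that a contact Lie algebra has index one. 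Hence at least one $(\mathcal{S}_i,\varphi_{\mathcal{S}_i})$ must be a contact toral-pair, and combined with Lemma~\ref{lem:additive} this forces exactly one.

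I do not anticipate a genuine obstacle: the proof is a direct assembly of Lemmas~\ref{lem:additive} and~\ref{lem:noFrobglue} with the index table of Theorem~\ref{lem:table}. The only point deserving an explicit word is that the hypothesis of Lemma~\ref{lem:noFrobglue} (``at most one contact toral-pair'') is vacuously satisfied under the contradiction hypothesis, which is precisely what licenses the restriction on admissible gluing rules and therefore the index-$0$ computation.
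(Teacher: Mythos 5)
Your proposal is correct and follows essentially the same route as the paper: the authors likewise combine the ``at most one'' bound of Lemma~\ref{lem:additive} with Lemma~\ref{lem:noFrobglue} and the index table of Theorem~\ref{lem:table} to observe that a toral poset built solely from ordinary toral-pairs via the rules $\{A_1,A_2,C,D_1,D_2,F\}$ has index zero and hence cannot be contact. Your explicit remark that the hypothesis of Lemma~\ref{lem:noFrobglue} is vacuously satisfied in the zero-contact-pair case is a nice touch that the paper leaves implicit.
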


At this point, we have established the ``only if" direction of the following theorem. The other direction is the content of the next section.

\begin{theorem}\label{thm:big}
Let $\mathcal{P}$ be a toral poset constructed from the \textup(contact\textup) toral-pairs $\{(\mathcal{S}_i,F_{\mathcal{S}_i})\}_{i=1}^n$ with construction sequence $\mathcal{S}_1=\mathcal{Q}_1\subset\mathcal{Q}_2\subset\cdots\subset\mathcal{Q}_n=\mathcal{P}$. Then, $\mathcal{P}$ is contact if and only if
\begin{enumerate}
    \item[\textup1.] $(\mathcal{S}_i,F_{\mathcal{S}_i})$ is a contact toral-pair for exactly one value of $i\in\{1,\dots,n\}$, and
    \item[\textup2.] $\mathcal{Q}_i$ is formed from $\mathcal{Q}_{i-1}$ and $\mathcal{S}_i$ by applying rules from the set $\{A_1,A_2,C,D_1,D_2,F\}$, for $i=2,\hdots,n$.
\end{enumerate}
\end{theorem}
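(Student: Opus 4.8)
The ``only if'' direction is already proved above, so I focus on the converse. The first move is to observe that $\ind(\mathfrak g_A(\mathcal P))=1$: telescoping Theorem~\ref{lem:table} along the construction sequence, and using that each of the rules $A_1,A_2,C,D_1,D_2,F$ contributes exactly $\ind(\mathfrak g_A(\mathcal S_i))$, one gets $\ind(\mathfrak g_A(\mathcal P))=\sum_{i=1}^n\ind(\mathfrak g_A(\mathcal S_i))$; the toral-pairs are Frobenius and contribute $0$, while the unique contact toral-pair is contact, hence has index one. By Lemma~\ref{lem:kernel} it therefore suffices to exhibit a regular one-form $\varphi\in(\mathfrak g_A(\mathcal P))^*$ together with an element $B\in\mathfrak g_A(\mathcal P)$ such that $\ker_A(d\varphi)=\mathrm{span}\{B\}$ and $\varphi(B)\ne 0$.

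The candidate is $\varphi_{\mathcal P}:=\sum_{i=1}^n F_{\mathcal S_i}$, each summand pulled into $(\mathfrak g_A(\mathcal P))^*$ along the vertex identifications of the construction sequence; by \textbf{(F1)} and \textbf{(CF1)}, $\varphi_{\mathcal P}$ has precisely one diagonal summand $E^*_{p_0,p_0}$ --- at the image $p_0$ of the distinguished vertex $1$ of the contact block --- and is otherwise supported on strict relations. The plan is to analyze $\ker(d\varphi_{\mathcal P})$ inductively along $\mathcal Q_1\subset\cdots\subset\mathcal Q_n=\mathcal P$. The enabling structural fact is that the rules $A_1,A_2,C,D_1,D_2,F$ identify minimal elements with minimal elements and maximal with maximal, so transitivity creates no new strict relations; hence $\mathfrak g(\mathcal Q_i)=\mathfrak g(\mathcal Q_{i-1})+\mathfrak g(\mathcal S_i)$, the two subalgebras overlapping only in the span of the (at most three) identified external vertices. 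Running the same kernel computation that proves the index formula in \textbf{\cite{Binary}} (and underlies Theorem~\ref{lem:table}), the linear equations cutting out $\ker(d\varphi_{\mathcal Q_i})$ split into the equations for $\ker(d\varphi_{\mathcal Q_{i-1}})$ and for $\ker(dF_{\mathcal S_i})$, coupled only through the diagonal coordinates at the identified external vertices; this forces $\dim\ker_A(d\varphi_{\mathcal Q_i})=\dim\ker_A(d\varphi_{\mathcal Q_{i-1}})+\dim\ker_A(dF_{\mathcal S_i})$, so in particular $\varphi_{\mathcal Q_i}$ is regular whenever $\varphi_{\mathcal Q_{i-1}}$ and $F_{\mathcal S_i}$ are. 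Since $\ker_A(dF_{\mathcal S})=0$ for a toral-pair and is one-dimensional for a contact toral-pair --- exactly as Theorems~\ref{thm:2chain},~\ref{thm:3chain},~\ref{thm:fork}, and~\ref{thm:ctpappendB} verify --- it follows that $\varphi_{\mathcal P}$ is regular with $\ker_A(d\varphi_{\mathcal P})=\mathrm{span}\{B\}$.

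It remains to check $\varphi_{\mathcal P}(B)\ne 0$, after which Lemma~\ref{lem:kernel} finishes the proof. Here I would track $B$ through the induction: up to scalar it is determined by its restriction to the contact block $\mathcal S_j$, where it is the kernel element $B_{\mathcal S_j}$ computed in the relevant base-case theorem, and on that block $F_{\mathcal S_j}(B_{\mathcal S_j})\ne 0$ because the diagonal summand $E^*_{p_0,p_0}$ of $F_{\mathcal S_j}$ reads off the nonzero diagonal coefficient of $B_{\mathcal S_j}$ (visible in each of Theorems~\ref{thm:2chain}--\ref{thm:ctpappendB}). One must then argue that the contributions of the remaining, diagonal-free summands $F_{\mathcal S_i}$ evaluated on the rest of $B$ do not cancel this term; this holds because, along each gluing by a rule in $\{A_1,A_2,C,D_1,D_2,F\}$, $B$ can be arranged to restrict to a scalar multiple of the identity on each newly attached toral-pair block, on which the small forms $F_{\mathcal S_i}$ vanish, so that $\varphi_{\mathcal P}(B)=F_{\mathcal S_j}(B_{\mathcal S_j})$ up to a nonzero scalar.

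The heart of the argument --- and its main obstacle --- is this inductive step: verifying, rule by rule for $A_1,A_2,C,D_1,D_2,F$, that the kernel equations decouple with dimensions adding (so that attaching a toral-pair introduces no spurious kernel direction even onto an already-contact $\mathcal Q_{i-1}$) and simultaneously that the spanning kernel element can be kept in the ``identity-like'' form above; the reconciliation takes place at the identified external vertices, where the possibly-nonzero off-diagonal coordinates of the $\mathcal Q_{i-1}$-kernel element must be matched against the scalar-matrix kernel $\mathbb C\,I_{\mathcal S_i}$ of the attached toral-pair, and it is precisely the restriction to these six rules --- the same restriction forced in Lemma~\ref{lem:noFrobglue} --- that makes the matching possible. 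Secondary bookkeeping includes checking that when $D_1$, $D_2$, or $F$ identifies a relation of the contact block with one of $\mathcal Q_{i-1}$ the corresponding one-form summands reinforce rather than cancel (so that $\varphi_{\mathcal P}$ really is the honest sum $\sum_i F_{\mathcal S_i}$ and the evaluation above is valid), and confirming that the base-case kernel computations extend to all the contact toral-pairs of Theorem~\ref{thm:ctpappendB} and not merely to the small examples of Theorems~\ref{thm:2chain}--\ref{thm:fork}.
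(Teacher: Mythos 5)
Your proposal follows essentially the same route as the paper's Section~\ref{sec:contactforms}: a glued one-form built from the block one-forms (Definition~\ref{def:toralform}), the restriction lemma (Lemma~\ref{lem:restrict}), property \textbf{(F4)} forcing the attached toral-pair blocks' kernel restrictions to be scalar matrices, matching at the identified external vertices via Lemma~\ref{lem:extequal}, and the trace condition pinning $\ker_A$ to a line on which the unique diagonal summand $E^*_{1,1}$ evaluates to a nonzero multiple of $|\mathcal{Q}_{i-1}|/|\mathcal{Q}_i|$ (Theorem~\ref{thm:main}). The one substantive difference is that the paper does not take the raw sum $\sum_i \varphi_{\mathcal{S}_i}$: for rules $D_1$, $D_2$, $F$ it subtracts the summands $E^*_{x_i,y_i}$, $E^*_{x_i,z_i}$ dual to the identified relations so that every edge appears with multiplicity one (keeping the form of type $\varphi_S$, as assumed in Lemma~\ref{lem:2} and in the smallness conditions), rather than arguing that doubled summands ``reinforce.''
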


\noindent Theorem~\ref{thm:big} then motivates the following definition of a ``contact sequence."

\begin{definition}\label{def:contoral}
Let $\mathcal{P}$ be a toral poset constructed from the \textup(contact\textup) toral-pairs $\{(\mathcal{S}_i,F_{\mathcal{S}_i})\}_{i=1}^n$ with construction sequence $\mathcal{S}_1=\mathcal{Q}_1\subset\mathcal{Q}_2\subset\cdots\subset\mathcal{Q}_n=\mathcal{P}$. If $\mathcal{Q}_i$ is formed from $\mathcal{Q}_{i-1}$ and $\mathcal{S}_i$ by applying a rule from the set $\{A_1,A_2,C,D_1,D_2,F\}$, for $i=2,\dots,n$, and $\mathcal{S}_i$ is the poset of a contact toral-pair for exactly one value of $i\in\{1,\dots,n\},$ then we call the sequence $\mathcal{S}_1=\mathcal{Q}_1\subset\mathcal{Q}_2\subset\dots\subset\mathcal{Q}_n=\mathcal{P}$ a \textit{contact sequence}.
\end{definition}

\begin{Ex}\label{ex:conseq}
Let $\mathcal{P}$ be the toral poset constructed from the \textup(contact\textup) toral-pairs $\{(\mathcal{S}_i,\varphi_{\mathcal{S}_i})\}_{i=1}^5$ with attendant contact sequence $$\mathcal{S}_1=\mathcal{Q}_1\subset\mathcal{Q}_2\subset\mathcal{Q}_3\subset\mathcal{Q}_4\subset\mathcal{Q}_5=\mathcal{P}$$ such that
\begin{itemize}
    \item $\mathcal{S}_1=\mathcal{Q}_1$ is isomorphic to the poset on $\{1,2,3,4,5\}$ with $1\prec2\prec3\prec4,5$,
    \item $\mathcal{S}_2$ is isomorphic to the poset on $\{1,2,3,4,5,6\}$ with $1\prec2,3\prec4\prec5$ and $4\prec 6$,
    \item $\mathcal{S}_3$ is isomorphic to the poset on $\{1,2,3,4,5,6\}$ with $1\prec2\prec3,4\prec 5$ and $2\prec 6$,
    \item $\mathcal{S}_4$ is isomorphic to the poset on $\{1,2,3,4\}$ with $1,2\prec3\prec 4$,
    \item $\mathcal{S}_5$ is isomorphic to the poset on $\{1,2,3,4,5,6\}$ with $1\prec3\prec4,5\prec 6$ and $2\prec 5,$
    \item $\mathcal{Q}_2$ is formed from $\mathcal{Q}_1$ and $\mathcal{S}_2$ by applying rule $A_1,$
    \item $\mathcal{Q}_3$ is formed from $\mathcal{Q}_2$ and $\mathcal{S}_3$ by applying rule $D_1,$
    \item $\mathcal{Q}_4$ is formed from $\mathcal{Q}_3$ and $\mathcal{S}_4$ by applying rule $C,$ and
    \item $\mathcal{Q}_5$ is formed from $\mathcal{Q}_4$ and $\mathcal{S}_5$ by applying rule $F.$
\end{itemize}
In Figure~\ref{fig:conseq} below we illustrate the posets $\mathcal{Q}_i$ of the contact sequence described above.
\bigskip

\begin{center}
\fbox{\begin{minipage}{45em}
\begin{figure}[H]
$$\begin{tikzpicture}[scale=0.5]
    \def\Node{\node [circle, fill, inner sep=0.5mm]}
    \Node (1) at (0,0){};
    \Node (2) at (0,1){};
    \Node (3) at (0,2){};
    \Node (4) at (0.5,3){};
    \Node (5) at (-0.5,3){};
    \draw (1)--(3)--(4);
    \draw (3)--(5);
    
    \node at (0,-1){$\mathcal{S}_1=\mathcal{Q}_1$};
    
    \draw[->] (0.75,1.5)--(1.75,1.5);
    \node at (1.25,2){$A_1$};
    
    \Node (6) at (2.5,0){};
    \Node (7) at (2.5,1){};
    \Node (8) at (2.5,2){};
    \Node (9) at (2,3){};
    \Node (10) at (3,3){};
    \draw (6)--(8)--(9);
    \draw (8)--(10);
    
    \Node (11) at (3.5,0){};
    \Node (12) at (3,1){};
    \Node (13) at (4,1){};
    \Node (14) at (3.5,2){};
    \Node (15) at (4,3){};
    \draw (11)--(12)--(14)--(15);
    \draw (11)--(13)--(14)--(10);
    
    \node at (3,-1){$\mathcal{Q}_2$};
    
    \draw[->] (4.5,1.5)--(5.5,1.5);
    \node at (5,2){$D_1$};
    
    \Node (16) at (6,0){};
    \Node (17) at (6,1){};
    \Node (18) at (6,2){};
    \Node (19) at (5.5,3){};
    \Node (20) at (6.5,3){};
    \draw (16)--(18)--(19);
    \draw (18)--(20);
    
    \Node (21) at (7,0){};
    \Node (22) at (6.5,1){};
    \Node (23) at (7.5,1){};
    \Node (24) at (7,2){};
    \Node (25) at (7.5,3){};
    \draw (21)--(22)--(24)--(25);
    \draw (21)--(23)--(24)--(20);
    
    \Node (26) at (8.25,1){};
    \Node (27) at (8.25,2){};
    \Node (28) at (9,3){};
    \Node (29) at (9,2){};
    \draw (21)--(26)--(27)--(28)--(29)--(26)--(25);
    
    \node at (7.25,-1){$\mathcal{Q}_3$};
    
    \draw[->] (9.5,1.5)--(10.5,1.5);
    \node at (10,2){$C$};
    
    \Node (30) at (11,0){};
    \Node (31) at (11,1){};
    \Node (32) at (11,2){};
    \Node (33) at (10.5,3){};
    \Node (34) at (11.5,3){};
    \draw (30)--(32)--(33);
    \draw (32)--(34);
    
    \Node (35) at (12,0){};
    \Node (36) at (11.5,1){};
    \Node (37) at (12.5,1){};
    \Node (38) at (12,2){};
    \Node (39) at (12.5,3){};
    \draw (35)--(36)--(38)--(39);
    \draw (35)--(37)--(38)--(34);
    
    \Node (40) at (13.25,1){};
    \Node (41) at (13.25,2){};
    \Node (42) at (14,3){};
    \Node (43) at (14,2){};
    \draw (35)--(40)--(41)--(42)--(43)--(40)--(39); 
    
    \Node (44) at (14.5,0){};
    \Node (45) at (15.5,0){};
    \Node (46) at (15,1.5){};
    \draw (44)--(46)--(42);
    \draw (45)--(46);
    
    \node at (13,-1){$\mathcal{Q}_4$};
    
    \draw[->] (16,1.5)--(17,1.5);
    \node at (16.5,2){$F$};
    
    \Node (47) at (17.5,0){};
    \Node (48) at (17.5,1){};
    \Node (49) at (17.5,2){};
    \Node (50) at (17,3){};
    \Node (51) at (18,3){};
    \draw (47)--(49)--(50);
    \draw (49)--(51);
    
    \Node (52) at (18.5,0){};
    \Node (53) at (18,1){};
    \Node (54) at (19,1){};
    \Node (55) at (18.5,2){};
    \Node (56) at (19,3){};
    \draw (52)--(53)--(55)--(56);
    \draw (52)--(54)--(55)--(51);
    
    \Node (57) at (19.75,1){};
    \Node (58) at (19.75,2){};
    \Node (59) at (20.5,3){};
    \Node (60) at (20.5,2){};
    \draw (52)--(57)--(58)--(59)--(60)--(57)--(56); 
    
    \Node (61) at (21,0){};
    \Node (62) at (22,0){};
    \Node (63) at (21.5,1.5){};
    \draw (61)--(63)--(59);
    \draw (62)--(63);
    
    \Node (64) at (22.5,1){};
    \Node (65) at (22,2){};
    \Node (66) at (23,2){};
    \draw (61)--(66);
    \draw (62)--(64)--(65)--(59)--(66)--(64);
    
    \node at (20,-1){$\mathcal{Q}_5=\mathcal{P}$};
\end{tikzpicture}$$
    \caption{Contact sequence of $\mathcal{P}$}
    \label{fig:conseq}
\end{figure}
\end{minipage}}
\end{center}
\end{Ex}

\begin{remark}\label{rem:order}
To ease discourse in the next section, we set the following conventions:
\begin{itemize}
    \item Given a contact sequence $\mathcal{S}_1=\mathcal{Q}_1\subset\mathcal{Q}_2\subset\dots\subset\mathcal{Q}_n$, assume that $\mathcal{S}_1=\mathcal{Q}_1$ is the unique poset from a contact toral-pair, as in Example~\ref{ex:conseq} above. That is, we adopt the convention that each contact sequence begins life as a poset from a contact toral-pair and grows by gluing on posets from toral-pairs, according to the gluing rules given in Lemma~\ref{lem:noFrobglue}. 
    \item A toral poset $\mathcal{P}$ for which there exists a contact sequence $\mathcal{Q}_1\subset\mathcal{Q}_2\subset\dots\subset\mathcal{Q}_n=\mathcal{P}$ will be called a contact toral poset. This naming convention will be justified upon the completion of the proof of Theorem~\ref{thm:big}.
\end{itemize}
\end{remark}

\section{Contact toral one-forms}\label{sec:contactforms}
In this section, given a contact toral poset $\mathcal{P}$ constructed from the (contact) toral-pairs $\{(\mathcal{S}_i,\varphi_{\mathcal{S}_i})\}_{i=1}^n$, we provide an inductive procedure for constructing a contact one-form $\varphi_{\mathcal{P}}\in\left(\mathfrak{g}_A(\mathcal{P})\right)^*$ from the (contact and Frobenius) one-forms $\varphi_{\mathcal{S}_i},$ for $i=1,\dots,n.$ Once complete, we will have  finished the proof of Theorem~\ref{thm:big}.

\begin{remark}
Let $\mathcal{P}$ be a toral poset constructed from the \textup(contact\textup) toral-pairs $\{(\mathcal{S}_i,\varphi_{\mathcal{S}_{i}})\}_{i=1}^n$ with contact sequence $\mathcal{S}_1=\mathcal{Q}_1\subset\mathcal{Q}_2\subset\hdots\subset\mathcal{Q}_n=\mathcal{P}$. Throughout this section, in the notation of Section~\ref{sec:contoral}, if $\mathcal{Q}=\mathcal{Q}_{i-1}$ and $\mathcal{S}=\mathcal{S}_i$, for $i=2,\hdots,n$, then we denote the elements of $\{a,x\}$ by $x_i$, $\{b,y\}$ by $y_i$, and $\{c,z\}$ by $z_i$.
\end{remark}

\begin{definition}\label{def:toralform}
If $\mathcal{P}$ is a contact toral poset constructed from the \textup(contact\textup) toral-pairs $\{(\mathcal{S}_i,\varphi_{\mathcal{S}_{i}})\}_{i=1}^n$ with contact sequence  $\mathcal{S}_1=\mathcal{Q}_1\subset\mathcal{Q}_2\subset\hdots\subset\mathcal{Q}_n=\mathcal{P}$, then define the contact toral one-form $\varphi_{\mathcal{Q}_i}\in(\mathfrak{g}_A(\mathcal{Q}_i))^*$, for $i=1,\hdots,n$, as follows:
\begin{itemize}[label={\large\textbullet}]
    \item $\varphi_{\mathcal{Q}_{1}}=\varphi_{\mathcal{S}_{1}}$;
    \item if $\mathcal{Q}_{i}$ is formed from $\mathcal{Q}_{i-1}$ and $\mathcal{S}_i$, for $1<i\le n$, by applying rule
    \begin{itemize}[label={\small\textbullet}]
        \item $\textup{A}_1,\textup{A}_2$, or $\textup{C}$, then $$\varphi_{\mathcal{Q}_i}=\varphi_{\mathcal{Q}_{i-1}}+\varphi_{\mathcal{S}_{i}}.$$
        \item $\textup{D}_1$, then
        \[\varphi_{\mathcal{Q}_i} =  \begin{cases} 
      \varphi_{\mathcal{Q}_{i-1}}+\varphi_{\mathcal{S}_{i}}-E^*_{x_i,y_i}, & \mathcal{S}_i\text{ has one minimal element}; \\
                                                &                        \\
       \varphi_{\mathcal{Q}_{i-1}}+\varphi_{\mathcal{S}_{i}}-E^*_{y_i,x_i}, & \mathcal{S}_i\text{ has one maximal element}.
   \end{cases}
\]
        \item $\textup{D}_2$, then
        \[\varphi_{\mathcal{Q}_i} =  \begin{cases} 
      \varphi_{\mathcal{Q}_{i-1}}+\varphi_{\mathcal{S}_{i}}-E^*_{x_i,z_i}, & \mathcal{S}_i\text{ has one minimal element}; \\
                                                &                        \\
       \varphi_{\mathcal{Q}_{i-1}}+\varphi_{\mathcal{S}_{i}}-E^*_{z_i,x_i}, & \mathcal{S}_i\text{ has one maximal element}.
   \end{cases}
\]
        \item $\textup{F}$, then
        \[\varphi_{\mathcal{Q}_i} =  \begin{cases} 
      \varphi_{\mathcal{Q}_{i-1}}+\varphi_{\mathcal{S}_{i}}-E^*_{x_i,y_i}-E^*_{x_i,z_i}, & \mathcal{S}_i\text{ has one minimal element}; \\
                                                &                        \\
       \varphi_{\mathcal{Q}_{i-1}}+\varphi_{\mathcal{S}_{i}}-E^*_{y_i,x_i}-E^*_{z_i,x_i}, & \mathcal{S}_i\text{ has one maximal element}.
   \end{cases}
\]
    \end{itemize}
\end{itemize}
\end{definition}
\medskip

\begin{remark}\label{rem:finf}
Note that $E^*_{p,q}$ is a summand of $\varphi_{\mathcal{Q}_i}$ if and only if $E^*_{p,q}$ is a summand of $\varphi_{\mathcal{Q}_{i-1}}$ or $\varphi_{\mathcal{S}_i}$.
\end{remark}

\begin{Ex}\label{ex:contoralform}
Consider the poset $\mathcal{P}$ and associated contact sequence $\mathcal{S}_1=\mathcal{Q}_1\subset\dots\subset\mathcal{Q}_5=\mathcal{P}$ of Example~\ref{ex:conseq}. If the Hasse diagram is labeled as in Figure~\ref{fig:contoralform} below, then we have the following construction of the contact toral one-form $\varphi_{\mathcal{P}}:$
\begin{itemize}
    \item $\varphi_{\mathcal{Q}_1}=\varphi_{\mathcal{S}_1}=E_{1,1}^*+E_{1,4}^*+E_{1,5}^*+E_{2,3}^*+E_{2,5}^*$ by Theorem~\ref{thm:fork},
    \item $\varphi_{\mathcal{Q}_2}=\varphi_{\mathcal{Q}_1}+E_{6,5}^*+E_{6,10}^*+E_{7,9}^*+E_{7,5}^*+E_{8,10}^*$ by Theorem~\ref{thm:ntp}\textup(i\textup) and the fact that gluing rule $A_1$ was applied,
    \item $\varphi_{\mathcal{Q}_3}=\varphi_{\mathcal{Q}_2}+E_{6,14}^*+E_{6,10}^*+E_{11,12}^*+E_{11,13}^*+E_{11,14}^*-E_{6,10}^*$ by Theorem~\ref{thm:ntp}\textup(v\textup) and the fact that gluing rule $D_1$ was applied,
    \item $\varphi_{\mathcal{Q}_4}=\varphi_{\mathcal{Q}_3}+E_{15,14}^*+E_{16,14}^*+E_{16,17}^*$ by Theorem 4\textup(ii\textup) of \textup{\textbf{\cite{Binary}}} and the fact that gluing rule $C$ was applied, and
    \item $\varphi_{\mathcal{Q}_5}=\varphi_{\mathcal{Q}_4}+E_{15,14}^*+E_{16,14}^*+E_{16,20}^*+E_{18,19}^*+E_{18,20}^*-E_{15,14}^*-E_{16,14}^*$ by Theorem~\ref{thm:ntp}\textup(iv\textup) and the fact that gluing rule $F$ was applied.
\end{itemize}
\bigskip

\begin{center}
\fbox{\begin{minipage}{45em}
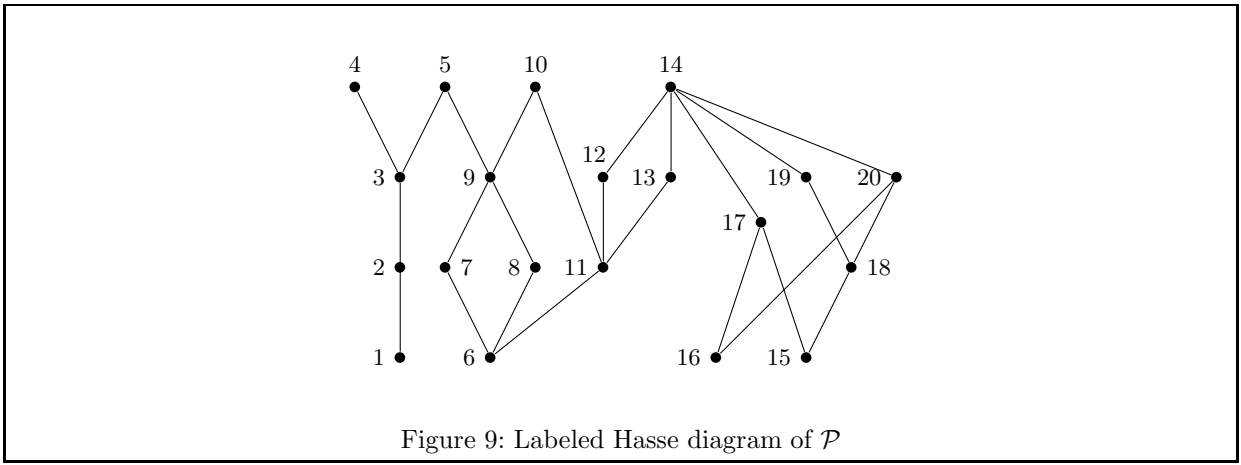
\begin{figure}[H]
$$\begin{tikzpicture}[scale=1.2]
\def\Node{\node [circle, fill, inner sep=0.5mm]}
    
    \Node[label=left:{\small 1}] (47) at (17.5,0){};
    \Node[label=left:{\small 2}] (48) at (17.5,1){};
    \Node[label=left:{\small 3}] (49) at (17.5,2){};
    \Node[label=above:{\small 4}] (50) at (17,3){};
    \Node[label=above:{\small 5}] (51) at (18,3){};
    \draw (47)--(49)--(50);
    \draw (49)--(51);
    
    \Node[label=left:{\small 6}] (52) at (18.5,0){};
    \Node[label=right:{\small 7}] (53) at (18,1){};
    \Node[label=left:{\small 8}] (54) at (19,1){};
    \Node[label=left:{\small 9}] (55) at (18.5,2){};
    \Node[label=above:{\small 10}] (56) at (19,3){};
    \draw (52)--(53)--(55)--(56);
    \draw (52)--(54)--(55)--(51);
    
    \Node[label=left:{\small 11}] (57) at (19.75,1){};
    \Node (58) at (19.75,2){};
    \node at (19.65,2.25){\small 12};
    \Node[label=above:{\small 14}] (59) at (20.5,3){};
    \Node[label=left:{\small 13}] (60) at (20.5,2){};
    \draw (52)--(57)--(58)--(59)--(60)--(57)--(56); 
    
    \Node[label=left:{\small 16}] (61) at (21,0){};
    \Node[label=left:{\small 15}] (62) at (22,0){};
    \Node[label=left:{\small 17}] (63) at (21.5,1.5){};
    \draw (61)--(63)--(59);
    \draw (62)--(63);
    
    \Node[label=right:{\small 18}] (64) at (22.5,1){};
    \Node[label=left:{\small 19}] (65) at (22,2){};
    \Node[label=left:{\small 20}] (66) at (23,2){};
    \draw (61)--(66);
    \draw (62)--(64)--(65)--(59)--(66)--(64);
\end{tikzpicture}$$
\caption{Labeled Hasse diagram of $\mathcal{P}$}
\label{fig:contoralform}
\end{figure}
\end{minipage}}
\end{center}
\end{Ex}

By definition, contact toral one-forms immediately satisfy \textbf{(CF1),(CF2),(CF3),} and \textbf{(CF4)} of Definition~\ref{def:contacttoral}. We document this in the following lemma.

\begin{lemma}\label{lem:toralUD}
If $\mathcal{P}$ is a contact toral poset constructed from the \textup(contact\textup) toral-pairs $\{(\mathcal{S}_i,\varphi_{\mathcal{S}_i})\}_{i=1}^n$ with contact sequence $\mathcal{S}_1=\mathcal{Q}_1\subset\mathcal{Q}_2\subset\hdots\subset\mathcal{Q}_n=\mathcal{P}$, then
\begin{itemize}
    \item $E_{p,p}^*$ is a nontrivial summand of $\varphi_{\mathcal{Q}_i}$ if and only if $p=1,$
    \item $\varphi_{\mathcal{Q}_i}-E_{1,1}^*$ is small,
    \item $U_{\varphi_{\mathcal{Q}_i}-E_{1,1}^*}(\mathcal{Q}_i)$ is a filter, $D_{\varphi_{\mathcal{Q}_i}-E_{1,1}^*}(\mathcal{Q}_i)$ is an order ideal of $\mathcal{Q}_i$,  $O_{\varphi_{\mathcal{Q}_i}-E_{1,1}^*}(\mathcal{Q}_i)=\emptyset$, and
    \item $\Gamma_{\varphi-E_{1,1}^*}$ contains all edges between elements of $Ext(\mathcal{Q}_i),$
\end{itemize} 
for $i=1,\hdots,n$.
\end{lemma}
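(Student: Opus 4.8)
The plan is to prove the four bullet points by induction on $i$, tracking how each gluing rule from the set $\{A_1,A_2,C,D_1,D_2,F\}$ modifies the one-form $\varphi_{\mathcal{Q}_{i-1}}$ according to Definition~\ref{def:toralform}. For the base case $i=1$, the pair $(\mathcal{S}_1,\varphi_{\mathcal{S}_1})$ is a contact toral-pair, so all four properties are exactly the content of \textbf{(CF1)}, \textbf{(CF2)}, \textbf{(CF3)}, \textbf{(CF4)} applied to $\mathcal{S}_1$. For the inductive step, I would assume the statement holds for $\mathcal{Q}_{i-1}$ and verify it for $\mathcal{Q}_i$, working rule-by-rule.

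First I would handle the first bullet: since $\mathcal{S}_i$ is a poset of a (non-contact) toral-pair for $i\geq 2$, no $E_{p,p}^*$ is a summand of $\varphi_{\mathcal{S}_i}$, and the subtracted terms in the $D_1,D_2,F$ cases are off-diagonal; hence by Remark~\ref{rem:finf} the diagonal summands of $\varphi_{\mathcal{Q}_i}$ are exactly those of $\varphi_{\mathcal{Q}_{i-1}}$, namely $E_{1,1}^*$ alone. Consequently $\varphi_{\mathcal{Q}_i}-E_{1,1}^* = (\varphi_{\mathcal{Q}_{i-1}}-E_{1,1}^*) + (\text{the adjustment involving }\varphi_{\mathcal{S}_i})$, and the remaining three bullets reduce to statements purely about the Frobenius (small) one-form $\varphi_{\mathcal{Q}_{i-1}}-E_{1,1}^*$ on $\mathcal{Q}_{i-1}$ and the Frobenius one-form $\varphi_{\mathcal{S}_i}$ on $\mathcal{S}_i$. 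At this point the argument is exactly the one used in \textbf{\cite{Binary}} to show that the analogous ``toral one-form'' construction for Frobenius toral posets is small with $U$ a filter, $D$ an ideal, $O=\emptyset$, and contains all $Rel_E$ edges — the only change is that the building block $\mathcal{Q}_{i-1}$ carries the extra diagonal term $E_{1,1}^*$, which does not interact with the gluing combinatorics since it is supported on a loop of the comparability graph rather than an edge. I would therefore invoke the corresponding lemma of \textbf{\cite{Binary}} (the statement that $\varphi_{\mathcal{Q}_i}$ is small with the stated $U$/$D$/$O$ structure and $Rel_E$-containment) applied to $\varphi_{\mathcal{Q}_{i-1}}-E_{1,1}^*$ and $\varphi_{\mathcal{S}_i}$, observing that its proof goes through verbatim.

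For the smallness bullet, the key observation is that $\Gamma_{\varphi_{\mathcal{Q}_i}-E_{1,1}^*}(\mathcal{Q}_i)$ is obtained from the disjoint union $\Gamma_{\varphi_{\mathcal{Q}_{i-1}}-E_{1,1}^*}(\mathcal{Q}_{i-1}) \sqcup \Gamma_{\varphi_{\mathcal{S}_i}}(\mathcal{S}_i)$ by identifying the vertices $c,a_1,a_2$ of $\mathcal{S}_i$ with $x_i,y_i,z_i$ of $\mathcal{Q}_{i-1}$ and then, in the cases $D_1,D_2,F$, deleting the edges that would become doubled — precisely the ones removed in Definition~\ref{def:toralform}. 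Since both constituent graphs are spanning subtrees of their respective comparability graphs and the gluing rules $\{A_1,A_2,C,D_1,D_2,F\}$ identify exactly enough vertices (and the deletions in $D_1,D_2,F$ remove exactly the edges creating cycles), the result is again a spanning subtree; this is the tree-surgery lemma from \textbf{\cite{Binary}}. For the $U$/$D$/$O$ and $Rel_E$ bullets, I would note that each sink (resp. source) of $\Gamma_{\varphi_{\mathcal{Q}_i}-E_{1,1}^*}$ is a sink (resp. source) of one of the two pieces, that the identified vertices $x_i,y_i,z_i$ retain their type under the allowed rules (for instance, under $F$ the maximal-type vertices $a_1,a_2$ of $\mathcal{S}_i$ are glued to maximal vertices $y_i,z_i$ of $\mathcal{Q}_{i-1}$), so no ``$O$'' vertices are created; then $U_{\varphi_{\mathcal{Q}_i}-E_{1,1}^*}(\mathcal{Q}_i)$ is a union of a filter of $\mathcal{Q}_{i-1}$ with a filter of $\mathcal{S}_i$ glued along shared maximal elements, hence a filter of $\mathcal{Q}_i$, and dually for $D$. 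The $Rel_E$-containment follows because every edge of the Hasse diagram of $(\mathcal{Q}_i)_{Ext(\mathcal{Q}_i)}$ is either an edge of $(\mathcal{Q}_{i-1})_{Ext(\mathcal{Q}_{i-1})}$, an edge of $(\mathcal{S}_i)_{Ext(\mathcal{S}_i)}$, or a new edge created by the gluing between two formerly-extremal vertices — and \textbf{(CF4)} (resp.\ \textbf{(F3)}) guarantees the first two types are present, while a case check on the six rules confirms the third type is also retained.

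The main obstacle I anticipate is the bookkeeping in the $D_1,D_2,F$ cases: one must confirm that the edge(s) subtracted in Definition~\ref{def:toralform} are genuinely the ones that would otherwise be duplicated (so that no spurious deletion occurs and smallness is preserved), and simultaneously that the deleted edges are \emph{not} among the $Rel_E$ edges required by the last bullet — equivalently, that after deletion the surviving tree still spans and still contains all extremal–extremal comparability edges. This requires a careful, but finite and mechanical, inspection of the local configuration at $x_i,y_i,z_i$ for each of the six rules, distinguishing the ``one minimal element'' versus ``one maximal element'' subcases of $\mathcal{S}_i$; since each building block has $|Ext|\in\{2,3\}$ this is a bounded check. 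I would present this as a single paragraph of case analysis referencing Table~\ref{tab:gluingrules}, or simply cite the corresponding verification in \textbf{\cite{Binary}} and note the additive correction is inert.
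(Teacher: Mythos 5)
Your argument is correct and is exactly the intended one: the paper disposes of this lemma with ``follows immediately from Definitions~\ref{def:toralpair}, \ref{def:contacttoral}, and \ref{def:toralform},'' and your induction on $i$ --- base case from \textbf{(CF1)}--\textbf{(CF4)}, inductive step tracking each rule in $\{A_1,A_2,C,D_1,D_2,F\}$ and checking that the edges subtracted in the $D_1,D_2,F$ cases are precisely the doubled $Rel_E$ edges --- is the detailed version of that same observation, imported from the analogous verification in \textbf{\cite{Binary}}. No gap; you have simply written out what the authors left implicit.
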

\begin{proof}
Follows immediately from Definitions~\ref{def:toralpair}, \ref{def:contacttoral}, and \ref{def:toralform}.
\end{proof}

\begin{remark}
Recall that for a poset $\mathcal{P}$, elements of $\mathfrak{g}(\mathcal{P})\subset\mathfrak{gl}(|\mathcal{P}|)$ are of the form $$\sum_{(i, j)\in Rel(\mathcal{P})}k_{i,j}E_{i,j}+\sum_{i\in\mathcal{P}}k_{i,i}E_{i,i}.$$ Let $\mathcal{P}$ be a poset formed by combining the posets $\mathcal{S}$ and $\mathcal{Q}$ by identifying minimal elements or maximal elements. If $B\in\mathfrak{g}(\mathcal{P})$, then let $B|_{\mathcal{Q}}$ denote the restriction of $B$ to basis elements of $\mathfrak{g}(\mathcal{Q})$ and $B|_{\mathcal{S}}$ denote the restriction of $B$ to basis elements of $\mathfrak{g}(\mathcal{S})$.
\end{remark}

In order to show that a contact toral one-form $\varphi_{\mathcal{P}}$ is indeed contact, we use Lemma~\ref{lem:kernel}, which requires an investigation of the kernel of $d\varphi_{\mathcal{P}}$. Lemmas~\ref{lem:2}, \ref{lem:restrict}, and \ref{lem:extequal} below identify properties that elements of such a kernel must possess. Notice that Lemmas~\ref{lem:2} and \ref{lem:restrict} are ``extended" versions of Lemmas 2 and 3, respectively, of \textbf{\cite{Binary}}, and the proofs are unchanged.

\begin{lemma}\label{lem:2}
If $\mathcal{P}$ is a contact toral poset and $B\in\mathfrak{g}(\mathcal{P})$ satisfies $\varphi_{\mathcal{P}}([E_{p,p},B])=0,$ for all $p\in\mathcal{P},$ then $E_{p,q}^*(B)=0,$ for all $p\prec q$ such that $E_{p,q}^*$ is a nontrivial summand of $\varphi_{\mathcal{P}}.$
\end{lemma}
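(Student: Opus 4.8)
The plan is to extract information from the hypothesis $\varphi_{\mathcal{P}}([E_{p,p},B]) = 0$ by computing this bracket explicitly for each diagonal generator $E_{p,p}$ and then reading off constraints on the coefficients of $B$. Write $B = \sum_{(i,j)\in Rel(\mathcal{P})} k_{i,j} E_{i,j} + \sum_{i\in\mathcal{P}} k_{i,i} E_{i,i}$ and $\varphi_{\mathcal{P}} = E_{1,1}^* + \sum_{(p,q)\in S} E_{p,q}^*$, where by Lemma~\ref{lem:toralUD} the set $S$ consists of strict relations and $\Gamma_{\varphi_{\mathcal{P}}-E_{1,1}^*}$ is a spanning tree of the comparability graph. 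For a fixed $p$, one has $[E_{p,p}, E_{i,j}] = \delta_{p,i} E_{p,j} - \delta_{p,j} E_{i,p}$, so $\varphi_{\mathcal{P}}([E_{p,p},B]) = \sum_{q:\,(p,q)\in S} k_{p,q} - \sum_{r:\,(r,p)\in S} k_{r,p}$ (the diagonal term $E_{1,1}^*$ contributes nothing since $[E_{p,p},E_{i,i}]=0$). Thus the hypothesis says precisely that for every vertex $p$ of the tree $\Gamma := \Gamma_{\varphi_{\mathcal{P}}-E_{1,1}^*}$, the sum of $k_{p,q}$ over out-edges equals the sum of $k_{r,p}$ over in-edges; equivalently, if we assign to each edge $(p,q)\in S$ the weight $k_{p,q}$, then the signed weight-sum at each vertex (out minus in) vanishes.

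The key step is then a graph-theoretic argument: a function on the edges of a \emph{tree} whose signed vertex-sums all vanish must be identically zero. I would prove this by induction on the number of edges, peeling off a leaf: if $\ell$ is a leaf of $\Gamma$ incident to the unique edge $e$, then the vertex condition at $\ell$ reads $\pm k_e = 0$, forcing $k_e = 0$; removing $\ell$ and $e$ leaves a smaller tree on which the remaining vertex conditions still hold (the condition at the neighbor of $\ell$ loses exactly the term $k_e = 0$), so by induction all edge-weights vanish. Since the base case (a single vertex, no edges) is vacuous, this gives $k_{p,q} = 0$ for every edge $(p,q)$ of $\Gamma$, i.e., for every $(p,q)\in S$ — which is exactly the assertion that $E_{p,q}^*(B) = 0$ whenever $E_{p,q}^*$ is a nontrivial summand of $\varphi_{\mathcal{P}}$.

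The main obstacle, such as it is, is bookkeeping the signs and confirming that the relevant subgraph really is a tree: the spanning-tree property of $\Gamma_{\varphi_{\mathcal{P}}-E_{1,1}^*}$ comes from smallness (Lemma~\ref{lem:toralUD}, which in turn rests on the definition of contact toral one-form), and one must note that the extra summand $E_{1,1}^*$ of $\varphi_{\mathcal{P}}$ plays no role in any of the brackets $[E_{p,p}, B]$ since those are spanned by off-diagonal matrix units. Once those two observations are in place, the argument is purely the leaf-peeling induction above; no computation with the Lie poset algebra structure beyond the elementary bracket formula is needed. (This is the argument of Lemma~2 of \textbf{\cite{Binary}}, adapted verbatim to the present setting, the only change being the harmless presence of $E_{1,1}^*$.)
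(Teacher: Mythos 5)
Your proof is correct and matches the paper's approach: the paper simply cites Lemma~2 of \textbf{\cite{Binary}} and notes the proof is unchanged, and the argument you reconstruct (the bracket computation reducing the hypothesis to vanishing signed vertex-sums on the spanning tree $\Gamma_{\varphi_{\mathcal{P}}-E_{1,1}^*}$, followed by leaf-peeling) is exactly that proof, with the correct observation that the extra summand $E_{1,1}^*$ contributes nothing since $[E_{p,p},B]$ is off-diagonal.
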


\noindent Lemma~\ref{lem:2} offers some insight into the structure of $\ker(d\varphi_{\mathcal{P}}),$ where $\varphi_{\mathcal{P}}$ is a contact toral one-form on a type-A Lie poset algebra $\mathfrak{g}_A(\mathcal{P}).$ In particular, we have the following corollary.

\begin{corollary}\label{cor:e11}
If $(\mathcal{P},\varphi_{\mathcal{P}})$ is a contact toral-pair with $B\in\ker_A(d\varphi_{\mathcal{P}}),$ then $E_{1,1}^*(B)\neq 0.$
\end{corollary}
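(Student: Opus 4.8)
The plan is to derive $E_{1,1}^*(B)\neq 0$ by contradiction, using the contact hypothesis together with Lemma~\ref{lem:kernel} and the structural information recorded in Lemmas~\ref{lem:2} and~\ref{lem:toralUD}. First I would observe that since $(\mathcal{P},\varphi_{\mathcal{P}})$ is a contact toral-pair, $\mathfrak{g}_A(\mathcal{P})$ is contact, hence $\ind(\mathfrak{g}_A(\mathcal{P}))=1$, so $\ker_A(d\varphi_{\mathcal{P}})$ is one-dimensional, say spanned by the given $B$. By Lemma~\ref{lem:kernel}, $\varphi_{\mathcal{P}}(B)\neq 0$; this is the key leverage.

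Next I would analyze $\varphi_{\mathcal{P}}(B)$ explicitly. Write $B=\sum_{(p,q)\in Rel(\mathcal{P})}k_{p,q}E_{p,q}+\sum_{p\in\mathcal{P}}k_{p,p}E_{p,p}$. By Definition~\ref{def:contacttoral}, property \textbf{(CF1)}, the only diagonal summand of $\varphi_{\mathcal{P}}$ is $E_{1,1}^*$, and the off-diagonal part is $\varphi_{\mathcal{P}}-E_{1,1}^* = \varphi_S$ for the small one-form $\varphi_S$. Hence
\[
\varphi_{\mathcal{P}}(B)=k_{1,1}+\sum_{(p,q)\in S}k_{p,q}.
\]
Now since $B\in\ker_A(d\varphi_{\mathcal{P}})\subseteq\ker(d\varphi_{\mathcal{P}})$ (viewing $\varphi_{\mathcal{P}}$ also as an element of $(\mathfrak{g}(\mathcal{P}))^*$, per Remark~\ref{rem:funct}), in particular $\varphi_{\mathcal{P}}([E_{p,p},B])=0$ for every $p\in\mathcal{P}$. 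Lemma~\ref{lem:2} then applies and gives $E_{p,q}^*(B)=k_{p,q}=0$ for every strict relation $p\prec q$ such that $E_{p,q}^*$ is a nontrivial summand of $\varphi_{\mathcal{P}}$ — that is, $k_{p,q}=0$ for all $(p,q)\in S$. Substituting into the display above yields $\varphi_{\mathcal{P}}(B)=k_{1,1}=E_{1,1}^*(B)$.

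Combining the two computations: $E_{1,1}^*(B)=\varphi_{\mathcal{P}}(B)\neq 0$, which is exactly the claim. I would close by remarking that this argument is essentially a one-line corollary once Lemma~\ref{lem:2} is in hand, and by noting where the contact toral-pair structure is actually used: \textbf{(CF1)} isolates $E_{1,1}^*$ as the unique diagonal term of $\varphi_{\mathcal{P}}$, and the contact hypothesis (through Lemma~\ref{lem:kernel}) supplies $\varphi_{\mathcal{P}}(B)\neq 0$. The only subtle point — and the one I would state carefully rather than any genuine obstacle — is the passage between $\ker_A$ and $\ker$: one needs $B$, as an element of $\mathfrak{g}(\mathcal{P})$, to satisfy $\varphi_{\mathcal{P}}([E_{p,p},B])=0$, which holds because $E_{p,p}-E_{p+1,p+1}$-type combinations span the diagonal of $\mathfrak{g}_A(\mathcal{P})$ and, together with the trivial bracket $[E_{p,p}+\cdots,B]$ contributions, force $\varphi_{\mathcal{P}}([E_{p,p},B])$ to vanish individually for each $p$; this is exactly the hypothesis under which Lemma~\ref{lem:2} is stated, so no extra work is required.
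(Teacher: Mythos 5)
Your proof is correct and follows essentially the same route as the paper's: use \textbf{(CF1)} to write $\varphi_{\mathcal{P}}=E_{1,1}^*+\sum_{(p,q)\in S}E_{p,q}^*$, apply Lemma~\ref{lem:2} to kill the off-diagonal contributions to $\varphi_{\mathcal{P}}(B)$, and invoke Lemma~\ref{lem:kernel} to get $\varphi_{\mathcal{P}}(B)\neq 0$. The only cosmetic difference is that the paper justifies the hypothesis of Lemma~\ref{lem:2} simply via the inclusion $\ker_A(d\varphi_{\mathcal{P}})\subset\ker(d\varphi_{\mathcal{P}})$ (cf. Remark~\ref{rem:decomp}), whereas you re-derive the vanishing of each $\varphi_{\mathcal{P}}([E_{p,p},B])$ by hand; both are fine.
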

\begin{proof}
First, notice that for a contact toral-pair $(\mathcal{P},\varphi_{\mathcal{P}}),$ condition \textbf{(CF1)} immediately implies that $$\varphi_{\mathcal{P}}=E_{1,1}^*+\sum_{(p,q)\in S}E_{p,q}^*,$$ for some subset $S\subset Rel(\mathcal{P}).$

Now, let $B\in\ker_A(d\varphi_{\mathcal{P}})\subset\ker(d\varphi_{\mathcal{P}}).$ Since $\mathcal{P}$ is a contact toral poset, $\varphi_{\mathcal{P}}$ is a contact toral one-form, and $\varphi_{\mathcal{P}}([E_{p,p},B])=0,$ for all $p\in\mathcal{P},$ we can apply Lemma~\ref{lem:2}; that is, $$\varphi_{\mathcal{P}}(B)=E_{1,1}^*(B)+\sum_{(p,q)\in S}E_{p,q}^*(B)=E_{1,1}^*(B).$$ Since $\varphi_{\mathcal{P}}$ is contact, Lemma~\ref{lem:kernel} implies that $\varphi_{\mathcal{P}}(B)\neq 0.$ The result follows.
\end{proof}

\begin{lemma}\label{lem:restrict}
Let $\mathcal{P}$ be a contact toral poset constructed from the \textup(contact\textup) toral-pairs $\{(\mathcal{S}_i,\varphi_{\mathcal{S}_i})\}_{i=1}^n$ with contact sequence $\mathcal{S}_1=\mathcal{Q}_1\subset\mathcal{Q}_2\subset\dots\subset\mathcal{Q}_n=\mathcal{P}.$ If $1<i\leq n,$ then $B\in\ker(d\varphi_{\mathcal{Q}_i})$ must satisfy $B|_{\mathcal{Q}_{i-1}}\in\ker(d\varphi_{\mathcal{Q}_{i-1}})$ and $B|_{\mathcal{S}_i}\in\ker(d\varphi_{\mathcal{S}_i}).$
\end{lemma}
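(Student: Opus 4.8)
The plan is to analyze the structure of $d\varphi_{\mathcal{Q}_i}$ and of $\varphi_{\mathcal{Q}_i}$ relative to the ``gluing'' of $\mathcal{S}_i$ onto $\mathcal{Q}_{i-1}$. Write $\mathcal{Q}=\mathcal{Q}_{i-1}$, $\mathcal{S}=\mathcal{S}_i$, $\mathcal{P}=\mathcal{Q}_i$. The key observation is that, since $\mathcal{P}$ is formed from $\mathcal{Q}$ and $\mathcal{S}$ by identifying some minimal elements (resp. some maximal elements), the relation set splits as $Rel(\mathcal{P})=Rel(\mathcal{Q})\sqcup Rel(\mathcal{S})$ up to the identified extremal elements: no new strict relations are created between an interior element of $\mathcal{Q}$ and an element of $\mathcal{S}\setminus Ext(\mathcal{S})$, because the only shared elements lie in $Ext(\mathcal{Q})\cap Ext(\mathcal{S})$ and all elements of $\mathcal{S}$ sit ``above'' or ``below'' the glued extremal elements in a way that does not interleave with the interior of $\mathcal{Q}$. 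Consequently the basis $\{E_{p,q}\}$ of $\mathfrak{g}(\mathcal{P})$ is the union of those of $\mathfrak{g}(\mathcal{Q})$ and $\mathfrak{g}(\mathcal{S})$, overlapping only in the $E_{p,q}$ with $p,q$ both among the identified extremal elements.

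First I would record, from Definition~\ref{def:toralform} together with Remark~\ref{rem:finf}, that $\varphi_{\mathcal{P}}$ decomposes as $\varphi_{\mathcal{P}}|_{\mathcal{Q}}=\varphi_{\mathcal{Q}}$ and $\varphi_{\mathcal{P}}|_{\mathcal{S}}=\varphi_{\mathcal{S}}$ — that is, every summand $E_{p,q}^*$ of $\varphi_{\mathcal{P}}$ with $(p,q)\in Rel(\mathcal{Q})$ (including the diagonal term $E_{1,1}^*$) is exactly a summand of $\varphi_{\mathcal{Q}}$, and every summand with $(p,q)\in Rel(\mathcal{S})$ is a summand of $\varphi_{\mathcal{S}}$; the subtracted terms $E^*_{x_i,y_i}$, $E^*_{x_i,z_i}$, etc., in rules $D_1,D_2,F$ are precisely the edges between identified extremal elements, which are shared and get counted once. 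Then I would compute, for $B=\sum k_{p,q}E_{p,q}\in\mathfrak{g}(\mathcal{P})$ and a basis element $E_{r,s}$ of $\mathfrak{g}(\mathcal{Q})$ (so $(r,s)\in Rel(\mathcal{Q})$ or $r=s\in\mathcal{Q}$): the bracket $[E_{r,s},B]$ only ever produces basis elements $E_{p,q}$ that also lie in $\mathfrak{g}(\mathcal{Q})$ except possibly for terms $E_{r,t}$ or $E_{t,s}$ with $t$ an element of $\mathcal{S}$ glued at $s$ or $r$; but such $t$ is then an identified extremal element, so $E_{r,t}$ and $E_{t,s}$ still lie in $\mathfrak{g}(\mathcal{Q})$. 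Pairing against $\varphi_{\mathcal{P}}$ and using the decomposition of the previous step, $\varphi_{\mathcal{P}}([E_{r,s},B])$ only sees the coefficients $k_{p,q}$ with $(p,q)$ indexing a basis element of $\mathfrak{g}(\mathcal{Q})$, i.e. it equals $\varphi_{\mathcal{Q}}([E_{r,s},B|_{\mathcal{Q}}])$. Hence $B\in\ker(d\varphi_{\mathcal{P}})$ forces $\varphi_{\mathcal{Q}}([E_{r,s},B|_{\mathcal{Q}}])=0$ for all basis $E_{r,s}$ of $\mathfrak{g}(\mathcal{Q})$, which is exactly $B|_{\mathcal{Q}}\in\ker(d\varphi_{\mathcal{Q}})$; the symmetric argument with $E_{r,s}$ ranging over basis elements of $\mathfrak{g}(\mathcal{S})$ gives $B|_{\mathcal{S}}\in\ker(d\varphi_{\mathcal{S}})$.

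The step I expect to be the main obstacle — and the one needing the most care — is the combinatorial claim that the bracket of a ``$\mathcal{Q}$-type'' matrix unit $E_{r,s}$ with an arbitrary $E_{p,q}$ of $\mathfrak{g}(\mathcal{P})$ cannot produce a genuinely ``mixed'' matrix unit (one indexed by an interior element of $\mathcal{Q}$ and an interior element of $\mathcal{S}$). This rests on the gluing construction: the only overlap of $\mathcal{Q}$ and $\mathcal{S}$ inside $\mathcal{P}$ is at identified extremal elements, and because each gluing rule in $\{A_1,A_2,C,D_1,D_2,F\}$ identifies minimal elements of $\mathcal{S}$ with minimal elements of $\mathcal{Q}$ and maximal with maximal, no element of $\mathcal{S}\setminus Ext(\mathcal{S})$ becomes comparable to any element of $\mathcal{Q}\setminus Ext(\mathcal{Q})$. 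I would isolate this as a structural lemma about the comparability relation in a glued poset and verify it; once it is in hand the kernel statement is a routine bookkeeping of which matrix entries $d\varphi_{\mathcal{P}}$ can detect, and the result follows. (The proof is essentially that of Lemma~3 of \textbf{\cite{Binary}}, the only new feature being the harmless diagonal term $E_{1,1}^*$ carried by $\varphi_{\mathcal{Q}_1}$, which lies in $\mathfrak{g}(\mathcal{Q})$ throughout and so does not affect the argument.)
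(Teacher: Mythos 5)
There is a genuine gap in the central identity $\varphi_{\mathcal{Q}_i}([E_{r,s},B])=\varphi_{\mathcal{Q}_{i-1}}([E_{r,s},B|_{\mathcal{Q}_{i-1}}])$: it fails precisely for the diagonal matrix units $E_{w,w}$ at the glued extremal elements $w$. Your structural claim -- that bracketing a $\mathcal{Q}_{i-1}$-type matrix unit against $B$ cannot produce a matrix unit outside $\mathfrak{g}(\mathcal{Q}_{i-1})$ -- is correct when at least one of $r,s$ is interior to $\mathcal{Q}_{i-1}$, and also for $E_{r,s}$ with $r\prec s$ both glued extremal elements (there $r$ is minimal and $s$ maximal in $\mathcal{Q}_i$, so $[E_{r,s},B]=(E^*_{s,s}(B)-E^*_{r,r}(B))E_{r,s}$ and all three evaluations agree). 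But if $w$ is a glued minimal element, then $[E_{w,w},B]=\sum_{w\prec q}E^*_{w,q}(B)\,E_{w,q}$ ranges over \emph{all} $q$ above $w$ in $\mathcal{Q}_i$, including elements of $\mathcal{S}_i$ that are not identified with anything in $\mathcal{Q}_{i-1}$; the terms $E_{w,q}$ with $q\in\mathcal{S}_i\setminus\mathcal{Q}_{i-1}$ do not lie in $\mathfrak{g}(\mathcal{Q}_{i-1})$, and they are detected by $\varphi_{\mathcal{Q}_i}$ because $E^*_{w,q}$ can be a summand of $\varphi_{\mathcal{S}_i}$. Concretely, glue the $2$-chain $\mathcal{S}=\{1',2'\}$ (with $\varphi_{\mathcal{S}}=E^*_{1',2'}$) onto $\mathcal{Q}=\{1\prec 2\prec 3\}$ (with $\varphi_{\mathcal{Q}}=E^*_{1,1}+E^*_{1,3}+E^*_{2,3}$) by rule $C$, identifying $1'$ with $1$. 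Then $\varphi_{\mathcal{P}}([E_{1,1},B])=E^*_{1,3}(B)+E^*_{1,2'}(B)$, whereas $\varphi_{\mathcal{Q}}([E_{1,1},B|_{\mathcal{Q}}])=E^*_{1,3}(B)$; the single condition $\varphi_{\mathcal{P}}([E_{1,1},B])=0$ does not by itself yield $\varphi_{\mathcal{Q}}([E_{1,1},B|_{\mathcal{Q}}])=0$. So as written, your bookkeeping does not establish the diagonal kernel conditions for $B|_{\mathcal{Q}_{i-1}}$ (or for $B|_{\mathcal{S}_i}$) at the glue points.

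The repair is standard and short, but it is a genuinely additional input: first run the companion Lemma~\ref{lem:2} on $\mathcal{Q}_i$, using all the diagonal conditions $\varphi_{\mathcal{Q}_i}([E_{p,p},B])=0$ together with smallness of $\varphi_{\mathcal{Q}_i}-E^*_{1,1}$ to conclude that $E^*_{p,q}(B)=0$ for every strict relation $(p,q)$ indexing a summand of $\varphi_{\mathcal{Q}_i}$. Since by Remark~\ref{rem:finf} every summand of $\varphi_{\mathcal{Q}_{i-1}}$ and of $\varphi_{\mathcal{S}_i}$ is a summand of $\varphi_{\mathcal{Q}_i}$, the quantities $\varphi_{\mathcal{Q}_{i-1}}([E_{w,w},B|_{\mathcal{Q}_{i-1}}])$ and $\varphi_{\mathcal{S}_i}([E_{w,w},B|_{\mathcal{S}_i}])$ at the glue points are signed sums of such vanishing coefficients and hence are each zero, independently of the mixed identity. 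With that amendment your argument for the off-diagonal and interior basis elements goes through as you describe, and the whole matches the route the paper takes (the paper gives no proof, deferring to Lemma~3 of \textbf{\cite{Binary}}, whose argument likewise leans on its Lemma~2 at exactly this point).
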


\begin{lemma}\label{lem:extequal}
If $(\mathcal{P},\varphi_{\mathcal{P}})$ is a contact toral-pair with $B\in\ker(d\varphi_{\mathcal{P}}),$ then $E_{p,p}^*(B)=E_{q,q}^*(B)$ for all $p,q\in Ext(\mathcal{P})$ such that $p\prec q.$
\end{lemma}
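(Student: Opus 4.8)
The plan is to test the kernel element $B$ against the single basis vector $E_{p,q}$ and use the extremality of $p$ and $q$ to collapse the resulting commutator completely. Write $B=\sum_{i\preceq j}k_{i,j}E_{i,j}\in\mathfrak{g}(\mathcal{P})$ (diagonal terms included), so that $E_{r,r}^*(B)=k_{r,r}$. First I would record the poset-theoretic observation underlying everything: since $p,q\in Ext(\mathcal{P})$ and $p\prec q$, the element $p$ cannot be maximal, hence is a minimal element of $\mathcal{P}$, and likewise $q$ cannot be minimal, hence is a maximal element of $\mathcal{P}$. Consequently $\{i\in\mathcal{P}:i\preceq p\}=\{p\}$ and $\{j\in\mathcal{P}:q\preceq j\}=\{q\}$.

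Next I would compute the bracket. Using $E_{i,j}E_{p,q}=\delta_{j,p}E_{i,q}$ and $E_{p,q}E_{i,j}=\delta_{i,q}E_{p,j}$, one gets $[B,E_{p,q}]=\sum_{i\preceq p}k_{i,p}E_{i,q}-\sum_{q\preceq j}k_{q,j}E_{p,j}$. By the extremality just noted, the first sum reduces to $k_{p,p}E_{p,q}$ and the second to $k_{q,q}E_{p,q}$, so $[B,E_{p,q}]=(k_{p,p}-k_{q,q})E_{p,q}$. Then I would apply $\varphi_{\mathcal{P}}$: by \textbf{(CF4)} the edge joining the extreme elements $p\prec q$ lies in $\Gamma_{\varphi_{\mathcal{P}}-E_{1,1}^*}$, so $E_{p,q}^*$ is a (coefficient-one) summand of $\varphi_{\mathcal{P}}$ and $\varphi_{\mathcal{P}}(E_{p,q})=1$. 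Hence $\varphi_{\mathcal{P}}([B,E_{p,q}])=k_{p,p}-k_{q,q}$. Finally, $B\in\ker(d\varphi_{\mathcal{P}})$ gives $0=d\varphi_{\mathcal{P}}(B,E_{p,q})=-\varphi_{\mathcal{P}}([B,E_{p,q}])=-(k_{p,p}-k_{q,q})$, i.e. $E_{p,p}^*(B)=E_{q,q}^*(B)$, as desired.

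The only point that requires a little care is the bookkeeping in the commutator: a priori $[B,E_{p,q}]$ mixes in off-diagonal coefficients $k_{i,p}$ (for $i\prec p$) and $k_{q,j}$ (for $q\prec j$), and in general one would need Lemma~\ref{lem:2} together with \textbf{(CF1)}--\textbf{(CF3)} to control their contributions after pairing with $\varphi_{\mathcal{P}}$. Here, however, those terms never appear at all, because $p$ being minimal and $q$ being maximal forces $i=p$ and $j=q$; so the argument uses nothing beyond \textbf{(CF4)} (to identify $(p,q)$ as an edge of the defining spanning tree) and the defining kernel condition. I expect this to be the shortest of the kernel-structure lemmas, and no obstacle of substance should arise.
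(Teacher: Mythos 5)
Your proposal is correct and follows essentially the same route as the paper: both arguments evaluate $\varphi_{\mathcal{P}}$ on the bracket of $B$ with $E_{p,q}$, use the extremality of $p$ and $q$ to kill all off-diagonal contributions, and invoke \textbf{(CF4)} to see that $E_{p,q}^*$ is a summand of $\varphi_{\mathcal{P}}$, leaving $E_{q,q}^*(B)-E_{p,p}^*(B)=0$. The only cosmetic difference is that the paper phrases the vanishing as $[E_{p,q},E_{r,s}]=0$ for all strict relations $r\prec s$, while you carry out the equivalent coefficient bookkeeping explicitly.
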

\begin{proof}
Let $\varphi=\varphi_{\mathcal{P}},$ $B\in\ker(d\varphi),$ and $p,q\in Ext(\mathcal{P}),$ with $p\prec q.$ Consider the equation $\varphi([E_{p,q},B])=0.$ By property $\mathbf{(CF4)},$ $E_{p,q}^*$ is a summand of $\varphi,$ and moreover, since $p,q\in Ext(\mathcal{P}),$ we have that $[E_{p,q},E_{r,s}]=0$ for all $r,s\in\mathcal{P}$ with $r\prec s.$ Therefore, $\varphi([E_{p,q},B])=E_{q,q}^*(B)-E_{p,p}^*(B)=0,$ and the result follows.
\end{proof}

\begin{remark}\label{rem:decomp}
We are now in a position to prove the main result of this paper \textup(see Theorem~\ref{thm:main} below\textup). First, recall that $\mathfrak{gl}(n)=\mathfrak{sl}(n)\oplus span\{I_n\},$ where $I_n=\sum_{i=1}^nE_{i,i}.$ Since a type-A Lie poset algebra $\mathfrak{g}_A(\mathcal{P})$ contains all diagonal elements of its parent simple Lie algebra, $\mathfrak{sl}(|\mathcal{P}|),$ we also have that $$\mathfrak{g}(\mathcal{P})=\mathfrak{g}_A(\mathcal{P})\oplus span\{I_{|\mathcal{P}|}\}.$$ Moreover, since $I_{|\mathcal{P}|}$ is central in $\mathfrak{gl}(|\mathcal{P}|),$ we have that $$\ker(d\varphi)=\ker_A(d\varphi)\oplus span\{I_{|\mathcal{P}|}\},$$ for all one-forms $\varphi\in(\mathfrak{g}_A(\mathcal{P}))^*.$
\end{remark}

\begin{theorem}\label{thm:main}
If $\mathcal{P}$ is a contact toral poset constructed from the \textup(contact\textup) toral-pairs $\{(\mathcal{S}_i,\varphi_{\mathcal{S}_i})\}_{i=1}^n$ with contact sequence $\mathcal{S}_1=\mathcal{Q}_1\subset\mathcal{Q}_2\subset\dots\subset\mathcal{Q}_n=\mathcal{P},$ then $\varphi_{\mathcal{Q}_i}\in(\mathfrak{g}_A(\mathcal{Q}_i))^*$ is a contact form, for all $i=1,\dots,n.$
\end{theorem}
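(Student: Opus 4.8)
I would argue by induction on $i$. The base case $i=1$ is immediate: $\varphi_{\mathcal{Q}_1}=\varphi_{\mathcal{S}_1}$ and $(\mathcal{S}_1,\varphi_{\mathcal{S}_1})$ is a contact toral-pair, so $\varphi_{\mathcal{S}_1}$ is a contact form by Definition~\ref{def:contacttoral}. For the inductive step, assume $\varphi_{\mathcal{Q}_{i-1}}$ is a contact form, so that $\ind(\mathfrak{g}_A(\mathcal{Q}_{i-1}))=1$ and, by Remark~\ref{rem:decomp}, $\ker(d\varphi_{\mathcal{Q}_{i-1}})=\ker_A(d\varphi_{\mathcal{Q}_{i-1}})\oplus\mathrm{span}\{I_{|\mathcal{Q}_{i-1}|}\}$ is two-dimensional, with $\ker_A(d\varphi_{\mathcal{Q}_{i-1}})=\mathrm{span}\{B'\}$. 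Since $\mathcal{Q}_i$ is formed from $\mathcal{Q}_{i-1}$ and the toral-pair $\mathcal{S}_i$ by a rule in $\{A_1,A_2,C,D_1,D_2,F\}$ — each of which, by Theorem~\ref{lem:table}, contributes $\ind(\mathfrak{g}_A(\mathcal{S}_i))=0$ to the index — we have $\ind(\mathfrak{g}_A(\mathcal{Q}_i))=1$. By Lemma~\ref{lem:kernel} it then suffices to show that $\varphi_{\mathcal{Q}_i}$ is regular, i.e.\ $\dim\ker_A(d\varphi_{\mathcal{Q}_i})=1$, and that a generator $B$ of $\ker_A(d\varphi_{\mathcal{Q}_i})$ satisfies $\varphi_{\mathcal{Q}_i}(B)\neq 0$.

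The structural fact driving the computation is that, for each of the six admissible gluing rules, every vertex of $\mathcal{S}_i$ identified with a vertex of $\mathcal{Q}_{i-1}$ is extremal in both posets and stays extremal in $\mathcal{Q}_i$; consequently $\mathcal{Q}_i$ has no relation running between $\mathcal{Q}_{i-1}\setminus\mathcal{S}_i$ and $\mathcal{S}_i\setminus\mathcal{Q}_{i-1}$, so every basis vector $E_{p,q}$ of $\mathfrak{g}(\mathcal{Q}_i)$ lies in $\mathfrak{g}(\mathcal{Q}_{i-1})$ or in $\mathfrak{g}(\mathcal{S}_i)$. Now let $B\in\ker(d\varphi_{\mathcal{Q}_i})$. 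By Lemma~\ref{lem:restrict}, $B|_{\mathcal{Q}_{i-1}}\in\ker(d\varphi_{\mathcal{Q}_{i-1}})$ and $B|_{\mathcal{S}_i}\in\ker(d\varphi_{\mathcal{S}_i})$; the former gives $B|_{\mathcal{Q}_{i-1}}=\alpha B'+\beta I_{|\mathcal{Q}_{i-1}|}$, while property \textbf{(F4)} forces $B|_{\mathcal{S}_i}=\gamma I_{|\mathcal{S}_i|}$. By the structural fact, $B$ is determined by $(\alpha,\beta,\gamma)$ subject only to the consistency requirements $\alpha E_{w,w}^*(B')+\beta=\gamma$ at the identified vertices $w$ (consistency on the finitely many shared off-diagonal edges between extremal elements is automatic, since by Lemma~\ref{lem:2} and property \textbf{(CF4)} — available via Lemma~\ref{lem:toralUD} — the form $B'$ annihilates every such edge). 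Since $\ind(\mathfrak{g}_A(\mathcal{Q}_i))=1$ forces $\dim\ker(d\varphi_{\mathcal{Q}_i})\ge 2$, while two distinct values of $E_{w,w}^*(B')$ would cut the $(\alpha,\beta,\gamma)$-space down to dimension $1$, we conclude that $E_{w,w}^*(B')$ has a common value $e$ over all identified vertices and that $\ker(d\varphi_{\mathcal{Q}_i})$ is exactly the $2$-dimensional space of $B$'s with $\gamma=\alpha e+\beta$; hence $\dim\ker_A(d\varphi_{\mathcal{Q}_i})=1$ and $\varphi_{\mathcal{Q}_i}$ is regular.

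For the non-vanishing, note first that $1$ is extremal in every $\mathcal{Q}_j$; since $\mathcal{Q}_{i-1}$ is connected, the argument of Lemma~\ref{lem:extequal} (valid for $\mathcal{Q}_{i-1}$ by Lemma~\ref{lem:toralUD}) propagates equality of $E_{p,p}^*(B')$ along all relations between extremal elements, so $e=E_{1,1}^*(B')$, which is nonzero by the argument of Corollary~\ref{cor:e11}. A trace computation gives $\mathrm{tr}(B)=\beta|\mathcal{Q}_{i-1}|+\gamma(|\mathcal{S}_i|-m)$, where $m$ is the number of identified vertices; imposing $\mathrm{tr}(B)=0$ singles out the generator $B$ of $\ker_A(d\varphi_{\mathcal{Q}_i})$, for which $\gamma\neq 0$ (if $\gamma=0$ then $\beta=0$, and then $\alpha e=\gamma-\beta=0$ with $e\neq 0$ gives $\alpha=0$, so $B=0$). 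Finally, by Lemma~\ref{lem:2} every summand of $\varphi_{\mathcal{Q}_i}$ other than $E_{1,1}^*$ vanishes on $B$, whence $\varphi_{\mathcal{Q}_i}(B)=E_{1,1}^*(B)=\alpha e+\beta=\gamma\neq 0$. By Lemma~\ref{lem:kernel}, $\varphi_{\mathcal{Q}_i}$ is a contact form, completing the induction.

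The main obstacle is the structural bookkeeping: a careful, case-by-case verification that the six admissible gluing rules introduce no relation between the old and new parts of the poset (so that kernel elements are genuinely reconstructed from their two restrictions), together with tracking exactly which basis vectors $E_{p,q}$ are common to $\mathfrak{g}(\mathcal{Q}_{i-1})$ and $\mathfrak{g}(\mathcal{S}_i)$. Once this is pinned down, the dimension count and the evaluation $\varphi_{\mathcal{Q}_i}(B)=\gamma$ are short. A minor auxiliary point is upgrading Lemma~\ref{lem:extequal} and Corollary~\ref{cor:e11} from contact toral-pairs to the contact toral poset $\mathcal{Q}_{i-1}$, which is immediate from Lemma~\ref{lem:toralUD}.
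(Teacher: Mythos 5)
Your proposal is correct and follows essentially the same route as the paper: induction on $i$, Lemma~\ref{lem:restrict} to split a kernel element into its restrictions to $\mathcal{Q}_{i-1}$ and $\mathcal{S}_i$, property \textbf{(F4)} to force $B|_{\mathcal{S}_i}=\gamma I$, the Lemma~\ref{lem:extequal}/connectivity propagation to identify $\gamma$ with $E_{1,1}^*(B|_{\mathcal{Q}_{i-1}})$, the trace condition to cut $\ker_A$ down to one dimension, and $\varphi_{\mathcal{Q}_i}(B)=E_{1,1}^*(B)\neq 0$ via Lemma~\ref{lem:2} and Corollary~\ref{cor:e11}. The only cosmetic difference is your interim dimension-count forcing the values $E_{w,w}^*(B')$ to agree at identified vertices, which is subsumed by the propagation argument you (and the paper) give anyway.
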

\begin{proof}
To show $\varphi_{\mathcal{Q}_i}$ is a contact form on $\mathfrak{g}_A(\mathcal{Q}_i),$ we use induction on $i$ to show that the following condition holds:
\begin{equation}\label{eqn:indhyp}
\text{there exists } B_i\in\mathfrak{g}_A(\mathcal{Q}_i) \text{ such that } span\{B_i\}=\ker_A(d\varphi_{\mathcal{Q}_i}) \text{ and } \varphi_{\mathcal{Q}_i}(B_i)\neq 0.
\end{equation}
An application of Lemma~\ref{lem:kernel} will then finish the proof.

The case of $i=1$ is clear since, by definition, $(\mathcal{Q}_1,\varphi_{\mathcal{Q}_1})$ is a contact toral-pair. Assume condition (\ref{eqn:indhyp}) holds for $B_{i-1}\in\ker_A(d\varphi_{\mathcal{Q}_{i-1}}),$ for some $i\in\{2,\dots,n\}.$ Let $0\neq B_i\in\ker_A(d\varphi_{\mathcal{Q}_i}),$ and note immediately that such a $B_i$ exists because $\ind(\mathfrak{g}_A(\mathcal{Q}_i))=1$. Since $\mathfrak{g}_A(\mathcal{Q}_i)\subset\mathfrak{g}(\mathcal{Q}_i)$ and $\ker_A(d\varphi_{\mathcal{Q}_i})\subset\ker(d\varphi_{\mathcal{Q}_i}),$  Lemma~\ref{lem:restrict} applies to $B_i,$ and we have that 
\begin{equation}\label{eqn:(a)}
B_i|_{\mathcal{Q}_{i-1}}\in\ker(d\varphi_{\mathcal{Q}_{i-1}})
\end{equation}
and
\begin{equation}\label{eqn:(b)}
    B_i|_{\mathcal{S}_i}\in\ker(d\varphi_{\mathcal{S}_i}).
\end{equation}
Combining (\ref{eqn:(a)}) with the inductive hypothesis and Remark~\ref{rem:decomp}, we have that $B_i|_{\mathcal{Q}_{i-1}}=k_1B_{i-1}+k_2I_{|\mathcal{Q}_{i-1}|}$, where $k_1,k_2\in\mathbb{C}$. Further, combining (\ref{eqn:(b)}) with property $\mathbf{(F4)}$ of toral-pairs, we must also have that $E_{p,p}^*(B_i|_{\mathcal{S}_i})=E_{q,q}^*(B_i|_{\mathcal{S}_i})$, for all $p,q\in\mathcal{S}_i,$ and $E_{p,q}^*(B_i|_{\mathcal{S}_i})=0$, for all $p,q\in\mathcal{S}_i$ satisfying $p\prec_{\mathcal{S}_i}q.$ Then since the gluing rules in the construction of $\mathcal{Q}_i$ involve only the identification of extremal elements -- and 1 is a minimal element of $\mathcal{Q}_i$ -- Lemma~\ref{lem:extequal} implies that $E_{p,p}^*(B_i|_{\mathcal{S}_i})=E_{1,1}^*(B_i|_{\mathcal{Q}_{i-1}})=k_1E_{1,1}^*(B_{i-1})+k_2,$ for all $p\in\mathcal{S}_i;$ that is, $$B_i=k_1B_{i-1}+k_2I_{|\mathcal{Q}_{i-1}|}+\sum_{p\in\mathcal{S}_i-\mathcal{Q}_{i-1}}(k_1E_{1,1}^*(B_{i-1})+k_2)E_{p,p}.$$
By assumption, $B_{i-1}\in\mathfrak{g}_A(\mathcal{Q}_{i-1}),$ so $tr(k_1B_{i-1})=0,$ and since $B_i\in\mathfrak{g}_A(\mathcal{Q}_i),$ it must be the case that $$tr(B_i)=k_2|\mathcal{Q}_{i-1}|+|\mathcal{S}_i-\mathcal{Q}_{i-1}|(k_1E_{1,1}^*(B_{i-1})+k_2)=0.$$ Solving for $k_2,$ we have that $$k_2=-\frac{|\mathcal{S}_i-\mathcal{Q}_{i-1}|}{|\mathcal{Q}_{i-1}|+|\mathcal{S}_i-\mathcal{Q}_{i-1}|}k_1E_{1,1}^*(B_{i-1}).$$ Therefore, a choice of $k_1$ determines $B_i$ completely, and so $span\{B_i\}=\ker_A(d\varphi_{\mathcal{Q}_i}).$ Further, by Lemma~\ref{lem:2} and Corollary~\ref{cor:e11}, $$\varphi_{\mathcal{Q}_i}(B_i)=E_{1,1}^*(B_i)=E_{1,1}^*(k_1B_{i-1}+k_2I)=k_1E_{1,1}^*(B_{i-1})\left(1-\frac{|\mathcal{S}_i-\mathcal{Q}_{i-1}|}{|\mathcal{Q}_{i-1}|+|\mathcal{S}_i-\mathcal{Q}_{i-1}|}\right)\neq 0.$$ The result follows from Lemma~\ref{lem:kernel}.
\end{proof}


\noindent
Considering Remark~\ref{rem:h2} and Theorem~\ref{thm:main}, we are led to the following conjecture.

\begin{conj}
If $\mathcal{P}$ is a connected poset for which $\mathfrak{g}_A(\mathcal{P})$ is contact, then $\mathcal{P}$ is toral.
\end{conj}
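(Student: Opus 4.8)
The plan is to prove the conjecture by strong induction on $|\mathcal{P}|$. If $\mathcal{P}$ has height at most two, it is toral by the results of \textbf{\cite{ContactLiePoset}} recalled in Remark~\ref{rem:h2}, which serves as the base case; so assume $\mathcal{P}$ is connected of height at least three with $\mathfrak{g}_A(\mathcal{P})$ contact, and that the conjecture holds for all posets of smaller cardinality. The goal is to produce a contact sequence for $\mathcal{P}$ in the sense of Definition~\ref{def:contoral}, after which a contact one-form on $\mathfrak{g}_A(\mathcal{P})$ is automatically supplied by Definition~\ref{def:toralform} and Theorem~\ref{thm:main}.

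First I would extract the combinatorial invariants that any such sequence must realize. By Theorem~\ref{lem:cycle} the Hasse diagram of $\mathcal{P}_{Ext(\mathcal{P})}$ is acyclic, and it is connected because $\mathcal{P}$ is; together with the index formulas of \textbf{\cite{SeriesA}} and $\ind(\mathfrak{g}_A(\mathcal{P}))=1$, this should pin down $|Rel_E(\mathcal{P})| = |Ext(\mathcal{P})|-1$ and, via the topological reading of the index formula, that $\Sigma(\mathcal{P})$ is contractible — exactly the profile forced on a contact toral poset by Theorems~\ref{thm:indform} and~\ref{thm:wedge} (note that $|C(\mathcal{P})|=1$ is the analogue here of Lemma~\ref{lem:additive}). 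Establishing these constraints a priori for an arbitrary connected contact poset, rather than for one already known to be toral, is itself nontrivial and would lean on the full index machinery of \textbf{\cite{SeriesA}}.

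The heart of the argument, and the step I expect to be the main obstacle, is a decomposition lemma: a connected contact poset $\mathcal{P}$ of height at least three admits an induced subposet $\mathcal{S}$ isomorphic to the poset of a toral-pair or a contact toral-pair, occurring as the final block of a gluing operation from $\{A_1,A_2,C,D_1,D_2,F\}$, for which the complementary poset $\mathcal{Q}$ obtained by reversing that gluing is connected with $\mathfrak{g}_A(\mathcal{Q})$ contact and $|\mathcal{Q}|<|\mathcal{P}|$. Concretely, one would inspect the filter generated by the maximal elements of $\mathcal{P}$ (or, dually, the order ideal generated by the minimal elements) and attempt to recognize a copy of one of the building blocks of Figures~\ref{fig:bb}, \ref{fig:bb2}, and~\ref{fig:ctp}; the constraints on $|Ext|$, on $Rel_E$, on the spectrum of the relevant algebra, and on the acyclicity of $\mathcal{P}_{Ext(\mathcal{P})}$ should sharply restrict the admissible local configurations. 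Granting such a $\mathcal{Q}$, the inductive hypothesis makes $\mathcal{Q}$ toral, hence a contact toral poset by Lemmas~\ref{lem:additive} and~\ref{lem:noFrobglue}, and re-gluing $\mathcal{S}$ to $\mathcal{Q}$ exhibits $\mathcal{P}$ as a contact toral poset, closing the induction.

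Two auxiliary tools seem indispensable for the decomposition lemma, and I would develop them first. One is a strengthening of Theorem~\ref{lem:table} that computes $\ind(\mathfrak{g}_A(\mathcal{P}))$ from $\ind(\mathfrak{g}_A(\mathcal{Q}))$ for an \emph{arbitrary} poset $\mathcal{Q}$ under each gluing move — not merely for toral $\mathcal{Q}$ — so that contactness of $\mathcal{Q}$ can be deduced from that of $\mathcal{P}$; this should again be within reach of the index results of \textbf{\cite{SeriesA}}. The other is a homological input controlling how deletion of an order filter affects the simplicial homology of $\Sigma(\mathcal{P})$, in the spirit of discrete Morse theory, so that the contractibility constraint above is inherited by $\mathcal{Q}$. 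I expect genuine difficulty here: the exactly analogous decomposition statement for Frobenius posets is the still-open conjecture of \textbf{\cite{Binary}}, and our situation is only marginally more tractable because precisely one contact block is present. A reasonable fallback, matching the program flagged in Section~\ref{sec:epilogue}, is to reduce the conjecture to the Frobenius–toral conjecture of \textbf{\cite{Binary}} by a ``combinatorial de-contactization'' that, given a connected contact poset, outputs a Frobenius poset of smaller size whose torality is equivalent to that of $\mathcal{P}$.
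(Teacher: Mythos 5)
This statement appears in the paper only as a conjecture; the paper offers no proof of it, and your proposal does not supply one either. The decisive step in your outline --- the ``decomposition lemma'' asserting that every connected contact poset of height at least three contains an induced copy of the poset of a (contact) toral-pair whose removal, by reversing one of the gluing rules $\{A_1,A_2,C,D_1,D_2,F\}$, leaves a smaller connected contact poset --- is precisely the open content of the conjecture, and you identify it as the main obstacle without resolving it. Nothing in the paper's machinery yields it: Theorems~\ref{thm:indform}, \ref{thm:wedge}, and~\ref{lem:table} are proved only for posets already known to be toral, so they cannot be invoked to ``pin down'' $|Rel_E(\mathcal{P})|=|Ext(\mathcal{P})|-1$ or the contractibility profile of $\Sigma(\mathcal{P})$ for an arbitrary connected contact poset, as your second paragraph suggests. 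The only unconditional constraints available for a general contact poset are Theorem~\ref{lem:cycle} (acyclicity of the Hasse diagram of $\mathcal{P}_{Ext(\mathcal{P})}$) and the index-one condition, and these fall well short of forcing the local structure of a building block.

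Moreover, even granting a decomposition of $\mathcal{P}$ into $\mathcal{Q}$ and $\mathcal{S}$, your induction needs $\mathfrak{g}_A(\mathcal{Q})$ to be \emph{contact}, not merely index one, and the paper notes explicitly (footnote in the introduction) that index one does not imply contact; your proposed strengthening of Theorem~\ref{lem:table} to arbitrary $\mathcal{Q}$ would at best control the index, not the existence of a contact form on $\mathfrak{g}_A(\mathcal{Q})$. Your fallback of reducing to the Frobenius--toral conjecture of \textbf{\cite{Binary}} replaces one open problem with another, as you acknowledge. In short, the proposal is a reasonable research program consistent with the paper's point of view, but it is not a proof: the base case (the height-at-most-two classification recalled in Remark~\ref{rem:h2}) is the only part that is actually established.
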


\section{Epilogue}\label{sec:epilogue}
At its genesis, contact geometry was mainly viewed as the odd-dimensional analogue of symplectic geometry (see \textbf{\cite{Geiges}}). Though the field has outgrown its reliance on symplectic geometry and taken on a life of its own, the intertwining of symplectic and contact manifolds is still a major focus. For example, a recent result of Barajas et al. (\textbf{\cite{Barajas}}, 2019) states that each $2n-$dimensional Frobenius, i.e., exact symplectic, Lie algebra $\mathfrak{f}$ can be constructed by a procedure which ``extends" a $(2n-1)-$dimensional contact ideal $\mathfrak{c}\subset\mathfrak{f}.$ As a consequence, the classification of Frobenius Lie algebras will immediately follow from a classification of contact Lie algebras. On the other hand, the same authors show that a similar procedure for constructing contact Lie algebras from Frobenius ones is not comprehensive; that is, there are contact Lie algebras that do not have codimension one, Frobenius ideals. Thus, we have the following question:
\begin{center}
    \textit{Which Frobenius Lie algebras can be augmented to yield contact Lie algebras?}
\end{center}

One of the first appearances of the question above was in Arnold's text \textbf{\cite{Arnold}}, where he described a ``contactification" functor from the category of exact symplectic manifolds to the category of contact manifolds. This notion was later extended by Diatta \textbf{\cite{Diatta}} and relabeled ``contactization." Effectively, the work done in the present article provides insight into a ``combinatorial contactization" upon adjustment of the convention given in Remark~\ref{rem:order}. In particular,
one can construct a Frobenius, toral poset and then ``contactize" the associated Lie algebra by gluing on a poset from a contact toral-pair -- according to the rules given in Theorem~\ref{thm:big}. The benefit of this approach is that it appears to be the first instance of its kind to use combinatorics, rather than algebra or geometry.

A potential direction for further research related to the notion of combinatorial contactization is in Lie poset algebras of other types. In general, a Lie poset subalgebra of a classical, simple Lie algebra $\mathfrak{g}$ is one which lies between a Cartan subalgebra of $\mathfrak{g}$ and an associated Borel subalgebra of $\mathfrak{g}$ (see \textbf{\cite{CG}}). The recent article (\textbf{\cite{BCD}}, 2021) serves as an initial investigation into the index and spectra of such algebras. Utilizing the identification in \textbf{\cite{BCD}} of restricted families of Frobenius, type-B, C, and D Lie poset algebras -- and their associated type-B, C, and D posets -- one may be able to introduce new building blocks and gluing rules in order to generate contact, type-B, C, and D Lie poset algebras, respectively.

Lie poset algebras are further generalized by removing the presence of a Borel subalgebra from the definition. That is, we define a \textit{Lie proset subalgebra} of a classical, simple Lie algebra $\mathfrak{g}$ to be any subalgebra which contains a Cartan subalgebra of $\mathfrak{g}.$ Clearly the family of Lie poset algebras is contained in this new family, and moreover, Lie proset algebras correspond to ``pre-orderings" in the same way that Lie poset algebras correspond to partial orderings. A \textit{proset} (pre-ordered set) is a set with a binary relation that satisfies reflexivity and transitivity, and each finite proset can be represented by the reachability of a directed graph, or \textit{quiver}. A Hasse diagram is an acyclic quiver, so combinatorially, the extension from Lie poset algebras to Lie proset algebras can be described as simply allowing (directed) cycles in the associated combinatorial object. We conclude with the following question:

\begin{center}
    \textit{Can a Frobenius Lie poset algebra be combinatorially contactized by gluing directed cycles to the associated Hasse diagram, thus yielding a contact Lie proset algebra?}
\end{center}

\section{Appendix A: New Toral Pairs}\label{sec:appendixA}

This appendix contains the proof of Theorem~\ref{thm:ntp}.

\begin{theorem}\label{thm:ntp1}
Each of the following pairs, consisting of a poset $\mathcal{P}$ and a one-form $\varphi_{\mathcal{P}}$, constitutes a toral-pair $(\mathcal{P},\varphi_{\mathcal{P}})$.
\begin{enumerate} [label=\textup(\roman*\textup)]
        \item $\mathcal{P}_1=\{1,2,3,4,5,6\}$ with $1\prec 2,3\prec 4\prec 5,6$, and $$\varphi_{\mathcal{P}_1}=E^*_{1,5}+E^*_{1,6}+E^*_{2,4}+E^*_{2,5}+E^*_{3,6},$$
        \item $\mathcal{P}_1^*=\{1,2,3,4,5,6\}$ with $1,2\prec 3\prec 4,5\prec 6$, and $$\varphi_{\mathcal{P}_1^*}=E^*_{1,6}+E^*_{2,6}+E^*_{1,4}+E^*_{3,4}+E^*_{2,5}.$$
\end{enumerate}
\end{theorem}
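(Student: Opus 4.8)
The plan is to verify directly that the pair $(\mathcal{P}_1,\varphi_{\mathcal{P}_1})$ satisfies every clause of Definition~\ref{def:toralpair}, and then to obtain (ii) from (i) by passing to the order dual (as in the proof of Theorem~\ref{thm:fork}): the poset $\mathcal{P}_1^*$ is isomorphic to the opposite of $\mathcal{P}_1$, $\varphi_{\mathcal{P}_1^*}$ is the corresponding transposed one-form, and each of \textbf{(P1)}, \textbf{(P2)}, \textbf{(F1)}--\textbf{(F4)} is preserved under this operation, with the roles of $U_\varphi$ and $D_\varphi$, and of filters and order ideals, interchanged.

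The combinatorial conditions are read off from the Hasse diagram of $\mathcal{P}_1$ in Figure~\ref{fig:bb2} together with the directed graph $\Gamma_{\varphi_{\mathcal{P}_1}}$, whose edge set is $S=\{(1,5),(1,6),(2,4),(2,5),(3,6)\}$. One has $Ext(\mathcal{P}_1)=\{1,5,6\}$, so $|Ext(\mathcal{P}_1)|=3$ and \textbf{(P1)} holds. The five edges of $\Gamma_{\varphi_{\mathcal{P}_1}}$ span the six vertices of the comparability graph of $\mathcal{P}_1$ and contain no cycle, so $\varphi_{\mathcal{P}_1}$ is small, giving \textbf{(F1)}. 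The sources of $\Gamma_{\varphi_{\mathcal{P}_1}}$ are $D_{\varphi_{\mathcal{P}_1}}(\mathcal{P}_1)=\{1,2,3\}$ and the sinks are $U_{\varphi_{\mathcal{P}_1}}(\mathcal{P}_1)=\{4,5,6\}$, with $O_{\varphi_{\mathcal{P}_1}}(\mathcal{P}_1)=\emptyset$; since $\{1,2,3\}$ is an order ideal and $\{4,5,6\}$ is a filter of $\mathcal{P}_1$, this is \textbf{(F2)}. Finally $\Gamma_{\varphi_{\mathcal{P}_1}}$ contains the edges $(1,5)$ and $(1,6)$, which are all the comparability-graph edges between elements of $Ext(\mathcal{P}_1)$, so \textbf{(F3)} holds.

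The main step is an explicit computation of $\ker(d\varphi_{\mathcal{P}_1})$. Writing $B=\sum_{i\preceq j}k_{i,j}E_{i,j}\in\mathfrak{g}(\mathcal{P}_1)$ and imposing $\varphi_{\mathcal{P}_1}([E_{a,b},B])=0$ for every basis element $E_{a,b}$, the equations arising from the diagonal generators $E_{p,p}$ and from the strict relations $(a,b)\in Rel(\mathcal{P}_1)$ with $(a,b)\notin S$ force every off-diagonal coordinate $k_{i,j}$ to vanish, after which the five equations arising from the summand edges in $S$ force all diagonal coordinates $k_{p,p}$ to coincide; hence $\ker(d\varphi_{\mathcal{P}_1})=\operatorname{span}\{I_6\}$. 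This yields \textbf{(F4)} at once, and since $\operatorname{tr}(I_6)\ne 0$ it gives $\ker_A(d\varphi_{\mathcal{P}_1})=0$, so $\mathfrak{g}_A(\mathcal{P}_1)$ is Frobenius and $\varphi_{\mathcal{P}_1}$ is a Frobenius one-form. For \textbf{(P2)} it remains to compute the spectrum. Since $\varphi_{\mathcal{P}_1}$ is small, a principal element is, up to a shift by a scalar multiple of $I_6$ (which does not affect $\ad$), the diagonal matrix $\widehat{\varphi}=\operatorname{diag}(h_1,\dots,h_6)$, where the $h_p$ are determined up to a common additive constant by requiring $h_p-h_q=1$ along each directed edge $(p,q)$ of the spanning tree $\Gamma_{\varphi_{\mathcal{P}_1}}$; solving this system gives $\widehat{\varphi}=\operatorname{diag}(1,1,1,0,0,0)$. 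Then $\ad(\widehat{\varphi})$ vanishes on the $5$-dimensional Cartan subalgebra of $\mathfrak{g}_A(\mathcal{P}_1)$ and acts on $E_{i,j}$, for $(i,j)\in Rel(\mathcal{P}_1)$, as multiplication by $h_i-h_j$; checking the thirteen relations of $\mathcal{P}_1$ shows each such scalar is $0$ or $1$, with four equal to $0$ and nine equal to $1$, so the spectrum of $\mathfrak{g}_A(\mathcal{P}_1)$ consists of nine $0$'s and nine $1$'s and is therefore binary.

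The main obstacle is the kernel computation itself: it is routine linear algebra, but one must handle the full system carefully to confirm that no off-diagonal coordinate is free. A secondary point to justify is that the diagonal solution of the tree equations really is a principal element — this follows because adding a multiple of $I_6$ does not alter $\ad$, the tree equations are consistent exactly because $\Gamma_{\varphi_{\mathcal{P}_1}}$ is a tree, and, once $\mathfrak{g}_A(\mathcal{P}_1)$ is known to be Frobenius, the map $x\mapsto\varphi_{\mathcal{P}_1}([x,-])$ is a bijection, so the preimage of $\varphi_{\mathcal{P}_1}$ is unique.
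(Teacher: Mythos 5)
Your proposal is correct, and its overall architecture coincides with the paper's: verify \textbf{(P1)} and \textbf{(F1)}--\textbf{(F3)} by inspection of $\Gamma_{\varphi_{\mathcal{P}_1}}$, establish \textbf{(F4)} and the Frobenius property by an explicit computation of $\ker(d\varphi_{\mathcal{P}_1})$ (your description of that computation --- diagonal generators together with the non-summand relations annihilate the off-diagonal coordinates, after which the five summand relations equate the diagonal entries --- matches exactly how the paper's Groups 1--4 resolve, ending with $\ker_A(d\varphi_{\mathcal{P}_1})=\{0\}$), and obtain (ii) by passing to the order dual. The one genuine divergence is the treatment of \textbf{(P2)}: the paper does not compute the spectrum at all, but instead invokes Theorem 2 of \cite{Binary} together with Theorem 3.3(1) of \cite{Ooms} to conclude that a Frobenius one-form satisfying \textbf{(F1)}--\textbf{(F3)} forces a binary spectrum. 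You instead exhibit the principal element directly as $\operatorname{diag}(1,1,1,0,0,0)$ modulo a multiple of $I_6$, solve the tree equations $h_p-h_q=1$ along the edges of $\Gamma_{\varphi_{\mathcal{P}_1}}$, and count eigenvalues; your count (five $0$'s from the Cartan, four $0$'s and nine $1$'s from the thirteen root vectors, hence nine $0$'s and nine $1$'s in dimension $18$) is correct, and your justification that the diagonal solution really is the principal element (uniqueness of the preimage once Frobeniusness is known, invariance of $\ad$ under shifts by $I_6$) is sound. Your route is more self-contained and makes the spectrum explicit; the paper's citation-based route is shorter and is the same mechanism it reuses for the other seven posets in Appendix A.
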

\begin{proof}
We prove (\textit{i}), as (\textit{ii}) follows via a symmetric argument. Let $\mathcal{P}=\mathcal{P}_1$ and $\varphi=\varphi_{\mathcal{P}_1}$. Clearly, $|Ext(\mathcal{P})|=3,$ so \textbf{(P1)} of Definition~\ref{def:toralpair} is satisfied. Further, $\varphi$ is clearly small, $U_\varphi(\mathcal{P})$ is a filter of $\mathcal{P},$ $D_\varphi(\mathcal{P})=\{1,2,3\}$ is an ideal of $\mathcal{P}$, $O_\varphi(\mathcal{P})=\emptyset$, and $\Gamma_\varphi$ contains the only edges, $(1,5)$ and $(1,6)$, between elements of $Ext(\mathcal{P})$. Thus, $\varphi$ satisfies $\mathbf{(F1)-(F3)}$ of Definition~\ref{def:toralpair}. Assuming $\varphi$ is Frobenius, applying Theorem 2 of \textbf{\cite{Binary}} and Theorem 3.3(1) of \textbf{\cite{Ooms}}, it follows that $\mathcal{P}$ satisfies \textbf{(P2)} of Definition~\ref{def:toralpair}. Therefore, all that remains to show is that $\varphi$ satisfies \textbf{(F4)} of Definition~\ref{def:toralpair} and is Frobenius on $\mathfrak{g}_A(\mathcal{P})$.

Let $B=\sum k_{i,j}E_{i,j}\in\ker(d\varphi)$ and consider the following groups of conditions $B$ must satisfy:

\bigskip
\noindent
\textbf{Group 1:}
\begin{itemize}
    \item $\varphi([E_{3,3},B])=k_{3,6}=0$,
    \item $\varphi([E_{4,4},B])=-k_{2,4}=0$,
    \item $\varphi([E_{2,6},B])=-k_{1,2}=0$,
    \item $\varphi([E_{3,4},B])=k_{4,6}=0$,
    \item $\varphi([E_{3,5},B])=-k_{1,3}=0$.
\end{itemize}
\textbf{Group 2:}
\begin{itemize}
    \item $\varphi([E_{2,2},B])=k_{2,4}+k_{2,5}=0$,
    \item $\varphi([E_{6,6},B])=-k_{1,6}-k_{3,6}=0$,
    \item $\varphi([E_{1,3},B])=k_{3,5}-k_{3,6}=0$,
    \item $\varphi([E_{1,4},B])=k_{4,5}-k_{4,6}=0$,
    \item $\varphi([E_{4,5},B])=-k_{1,4}-k_{2,4}=0$.
\end{itemize}
\textbf{Group 3:}
\begin{itemize}
    \item $\varphi([E_{1,1},B])=k_{1,5}+k_{1,6}=0$,
    \item $\varphi([E_{5,5},B])=-k_{1,5}-k_{2,5}=0$,
    \item $\varphi([E_{1,2},B])=k_{2,5}+k_{2,6}=0$,
    \item $\varphi([E_{4,6},B])=-k_{1,4}-k_{3,4}=0$.
\end{itemize}
\textbf{Group 4:}
\begin{itemize}
    \item $\varphi([E_{1,5},B])=k_{5,5}-k_{1,1}=0$,
    \item $\varphi([E_{1,6},B])=k_{6,6}-k_{1,1}=0$,
    \item $\varphi([E_{2,4},B])=k_{4,4}-k_{2,2}+k_{4,5}=0$,
    \item $\varphi([E_{2,5},B])=k_{5,5}-k_{2,2}-k_{1,2}=0$,
    \item $\varphi([E_{3,6},B])=k_{6,6}-k_{3,3}-k_{1,3}=0$.
\end{itemize}
The restrictions of the equations in Group 1 immediately imply that
\begin{equation}\label{eqn:p11}
k_{1,2}=k_{1,3}=k_{2,4}=k_{3,6}=k_{4,6}=0.
\end{equation} 
Combining the Group 1 restrictions to those of Group 2, we may conclude that
\begin{equation}\label{eqn:p12}
k_{1,4}=k_{1,6}=k_{2,5}=k_{3,5}=k_{4,5}=0.
\end{equation} 
Combining the Group 1 and 2 restrictions to those of Group 3, we may conclude that
\begin{equation}\label{eqn:p13}
k_{1,5}=k_{2,6}=k_{3,4}=0.
\end{equation} 
Finally, combining the restrictions of Groups 1, 2, and 3 to those of Group 4, we find that
\begin{equation}\label{eqn:p14}
k_{i,i}=k_{j,j},\text{ for all }i,j\in\mathcal{P}.
\end{equation}
Equations (\ref{eqn:p11}) -- (\ref{eqn:p14}) establish that $\varphi$ satisfies \textbf{(F4)} of Definition~\ref{def:toralpair}. Now, considering Remark~\ref{rem:funct}, combining the type-A trace condition $$\sum_{i=1}^6k_{i,i}=0$$ with (\ref{eqn:p11}) -- (\ref{eqn:p14}), we find that $\ker_A(d\varphi)=\{0\}$. Therefore, $\mathfrak{g}_A(\mathcal{P})$ is Frobenius with Frobenius one-form $\varphi$, and $(\mathcal{P},\varphi)$ is a toral-pair.
\end{proof}

\begin{theorem}\label{thm:ntp2}
Each of the following pairs, consisting of a poset $\mathcal{P}$ and a one-form $\varphi_{\mathcal{P}}$, constitutes a toral-pair $(\mathcal{P},\varphi_{\mathcal{P}})$.
\begin{enumerate} [label=\textup(\roman*\textup)]
        \item $\mathcal{P}_2=\{1,2,3,4,5,6\}$ with $1\prec 2,3\prec 4\prec 5$; $3\prec 6$, and $$\varphi_{\mathcal{P}_2}=E^*_{1,5}+E^*_{1,6}+E^*_{2,4}+E^*_{3,4}+E^*_{3,6},$$
        \item $\mathcal{P}_2^*=\{1,2,3,4,5,6\}$ with $1\prec 3\prec 4,5\prec 6$; $2\prec 5$, and $$\varphi_{\mathcal{P}_2^*}=E^*_{1,6}+E^*_{2,6}+E^*_{2,5}+E^*_{3,4}+E^*_{3,5}.$$
\end{enumerate}
\end{theorem}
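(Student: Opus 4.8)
The plan is to follow the template set by the proof of Theorem~\ref{thm:ntp1}, checking the defining conditions of a toral-pair one at a time and reducing the only substantive point to a single kernel computation. As there, I would prove part (i) in detail and obtain part (ii) by the order-reversing symmetry that swaps $\mathcal{P}_2$ with $\mathcal{P}_2^*$ and $\varphi_{\mathcal{P}_2}$ with $\varphi_{\mathcal{P}_2^*}$. First, reading off the Hasse diagram of $\mathcal{P}_2$, one has $Ext(\mathcal{P}_2)=\{1,5,6\}$, so $|Ext(\mathcal{P}_2)|=3$ and \textbf{(P1)} holds. The directed graph $\Gamma_{\varphi_{\mathcal{P}_2}}$ has edge set $\{(1,5),(1,6),(2,4),(3,4),(3,6)\}$, a spanning tree of the comparability graph, so \textbf{(F1)} holds; its sources are $\{1,2,3\}$ and its sinks are $\{4,5,6\}$ with no intermediate vertices, where $\{1,2,3\}$ is an order ideal and $\{4,5,6\}$ is a filter, giving \textbf{(F2)}; and $\Gamma_{\varphi_{\mathcal{P}_2}}$ contains the edges $(1,5)$ and $(1,6)$, which are exactly the relations in $Rel_E(\mathcal{P}_2)$, giving \textbf{(F3)}.

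Next comes the main work: I would explicitly compute $\ker(d\varphi_{\mathcal{P}_2})$ inside $\mathfrak{g}(\mathcal{P}_2)$. Writing $B=\sum k_{i,j}E_{i,j}$, the conditions $\varphi_{\mathcal{P}_2}([E_{p,q},B])=0$ for all $p\preceq q$ yield a linear system in the $k_{i,j}$. As in Theorem~\ref{thm:ntp1}, I would organize these equations into a short chain of ``groups,'' each using the previously deduced vanishings to force new coordinates to vanish: starting from the equations attached to the $E_{p,p}$ and to the ``long'' relations incident to leaves of $\Gamma_{\varphi_{\mathcal{P}_2}}$, one peels off the off-diagonal coordinates $k_{i,j}$ layer by layer until every off-diagonal entry of $B$ (in particular every $k_{i,j}$ with $E_{i,j}^*$ a summand of $\varphi_{\mathcal{P}_2}$) is zero, and the remaining diagonal equations force $k_{i,i}=k_{j,j}$ for all $i,j$. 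This simultaneously verifies \textbf{(F4)} of Definition~\ref{def:toralpair}, since any $B\in\ker(d\varphi_{\mathcal{P}_2})$ then has equal diagonal entries and vanishing off-diagonal summand-coordinates.

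Finally, restricting to $\mathfrak{g}_A(\mathcal{P}_2)$ and imposing the trace condition $\sum_{i=1}^6 k_{i,i}=0$ removes the one remaining degree of freedom, so $\ker_A(d\varphi_{\mathcal{P}_2})=\{0\}$ and $\mathfrak{g}_A(\mathcal{P}_2)$ is Frobenius with Frobenius one-form $\varphi_{\mathcal{P}_2}$. Property \textbf{(P2)} then follows from Theorem 2 of \textbf{\cite{Binary}} together with Theorem 3.3(1) of \textbf{\cite{Ooms}}, exactly as in the proof of Theorem~\ref{thm:ntp1}, whence $(\mathcal{P}_2,\varphi_{\mathcal{P}_2})$ is a toral-pair. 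The only place anything can go wrong is the kernel computation, and there the sole difficulty is organizational—choosing the order in which to process the equations so that the cascade of substitutions closes up; there is no conceptual obstacle, since the structure of $\Gamma_{\varphi_{\mathcal{P}_2}}$ (a tree whose leaves are sources or sinks) guarantees such an order exists.
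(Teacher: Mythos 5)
Your proposal follows the paper's proof essentially verbatim: verify \textbf{(P1)} and \textbf{(F1)}--\textbf{(F3)} by inspection (with the same sets $U_{\varphi}(\mathcal{P}_2)=\{4,5,6\}$ and $D_{\varphi}(\mathcal{P}_2)=\{1,2,3\}$), reduce \textbf{(P2)} to Theorem 2 of \cite{Binary} and Theorem 3.3(1) of \cite{Ooms}, and establish \textbf{(F4)} and the Frobenius property via a grouped cascade of kernel equations followed by the trace condition, handling (ii) by symmetry. The paper's Appendix A carries out the kernel computation explicitly in five groups, and it closes up exactly as you predict, so the only difference is that you leave the linear algebra as an outline rather than writing out the equations.
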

\begin{proof}
We prove (\textit{i}), as (\textit{ii}) follows via a symmetric argument. Let $\mathcal{P}=\mathcal{P}_2$ and $\varphi=\varphi_{\mathcal{P}_2}$. Clearly, $|Ext(\mathcal{P})|=3,$ so \textbf{(P1)} of Definition~\ref{def:toralpair} is satisfied. Further, $\varphi$ is clearly small, $U_\varphi(\mathcal{P})=\{4,5,6\}$ is a filter of $\mathcal{P}$, $D_\varphi(\mathcal{P})=\{1,2,3\}$ is an ideal of $\mathcal{P}$, $O_\varphi(\mathcal{P})=\emptyset$, and $\Gamma_\varphi$ contains the only edges, $(1,5)$ and $(1,6)$, between elements of $Ext(\mathcal{P})$. Thus, $\varphi$ satisfies $\mathbf{(F1)-(F3)}$ of Definition~\ref{def:toralpair}. Assuming $\varphi$ is Frobenius, applying Theorem 2 of \textbf{\cite{Binary}} and Theorem 3.3(1) of \textbf{\cite{Ooms}}, it follows that $\mathcal{P}$ satisfies \textbf{(P2)} of Definition~\ref{def:toralpair}. Therefore, all that remains to show is that $\varphi$ satisfies \textbf{(F4)} of Definition~\ref{def:toralpair} and is Frobenius on $\mathfrak{g}_A(\mathcal{P})$.

Let $B=\sum k_{i,j}E_{i,j}\in\ker(d\varphi)$ and consider the following groups of conditions $B$ must satisfy:
\bigskip

\noindent
\textbf{Group 1:}
\begin{itemize}
    \item $\varphi([E_{2,2},B])=k_{2,4}=0$,
    \item $\varphi([E_{5,5},B])=-k_{1,5}=0$,
    \item $\varphi([E_{1,2},B])=k_{2,5}=0$,
    \item $\varphi([E_{1,4},B])=k_{4,5}=0$,
    \item $\varphi([E_{2,5},B])=-k_{1,2}=0$,
    \item $\varphi([E_{3,5},B])=-k_{1,3}=0$,
    \item $\varphi([E_{4,5},B])=-k_{1,4}=0$.
\end{itemize}
\textbf{Group 2:}
\begin{itemize}
    \item $\varphi([E_{1,1},B])=k_{1,5}+k_{1,6}=0$,
    \item $\varphi([E_{4,4},B])=-k_{2,4}-k_{3,4}=0$.
\end{itemize}
\textbf{Group 3:}
\begin{itemize}
    \item $\varphi([E_{3,3},B])=k_{3,4}+k_{3,6}=0$,
    \item $\varphi([E_{6,6},B])=-k_{1,6}-k_{3,6}=0$.
\end{itemize}
\textbf{Group 4:}
\begin{itemize}
    \item $\varphi([E_{1,3},B])=k_{3,5}+k_{3,6}=0$.
\end{itemize}
\textbf{Group 5:}
\begin{itemize}
    \item $\varphi([E_{1,5},B])=k_{5,5}-k_{1,1}=0$,
    \item $\varphi([E_{1,6},B])=k_{6,6}-k_{1,1}=0$,
    \item $\varphi([E_{2,4},B])=k_{4,4}-k_{2,2}=0$,
    \item $\varphi([E_{3,4},B])=k_{4,4}-k_{3,3}=0$,
    \item $\varphi([E_{3,6},B])=k_{6,6}-k_{3,3}-k_{1,3}=0$.
\end{itemize}
The restrictions of the equations in Group 1 immediately imply that
\begin{equation}\label{eqn:p21}
k_{1,2}=k_{1,3}=k_{1,4}=k_{1,5}=k_{2,4}=k_{2,5}=k_{4,5}=0.
\end{equation} 
Combining the Group 1 restrictions to those of Group 2, we may conclude that
\begin{equation}\label{eqn:p22}
k_{1,6}=k_{3,4}=0.
\end{equation} 
Combining the Group 1 and 2 restrictions to those of Group 3, we may conclude that
\begin{equation}\label{eqn:p23}
k_{3,6}=0.
\end{equation} 
Combining the Group 1, 2, and 3 restrictions to those of Group 4, we may conclude that
\begin{equation}\label{eqn:p24}
k_{3,5}=0.
\end{equation} 
Finally, combining the restrictions of Groups 1, 2, 3, and 4 to those of Group 5, we find that
\begin{equation}\label{eqn:p25}
k_{i,i}=k_{j,j},\text{ for all }i,j\in\mathcal{P}.
\end{equation}
Equations (\ref{eqn:p21}) -- (\ref{eqn:p25}) establish that $\varphi$ satisfies \textbf{(F4)} of Definition~\ref{def:toralpair}. Now, considering Remark~\ref{rem:funct}, combining the type-A trace condition $$\sum_{i=1}^6k_{i,i}=0$$ with (\ref{eqn:p21}) -- (\ref{eqn:p25}), we find that $\ker_A(d\varphi)=\{0\}$. Therefore, $\mathfrak{g}_A(\mathcal{P})$ is Frobenius with Frobenius one-form $\varphi$, and $(\mathcal{P},\varphi)$ is a toral-pair.
\end{proof}

\begin{theorem}\label{thm:ntp3}
Each of the following pairs, consisting of a poset $\mathcal{P}$ and a one-form $\varphi_{\mathcal{P}}$, constitutes a toral-pair $(\mathcal{P},\varphi_{\mathcal{P}})$.
\begin{enumerate} [label=\textup(\roman*\textup)]
        \item $\mathcal{P}_3=\{1,2,3,4,5,6\}$ with $1\prec 2\prec 3, 4\prec 5$; $2\prec 6$, and $$\varphi_{\mathcal{P}_3}=E^*_{1,5}+E^*_{1,6}+E^*_{2,3}+E^*_{2,4}+E^*_{2,5},$$
        \item $\mathcal{P}_3^*=\{1,2,3,4,5,6\}$ with $1\prec 3, 4\prec 5\prec 6$; $2\prec 5$, and $$\varphi_{\mathcal{P}_3^*}=E^*_{1,6}+E^*_{2,6}+E^*_{1,5}+E^*_{3,5}+E^*_{4,5}.$$
\end{enumerate}
\end{theorem}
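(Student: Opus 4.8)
The plan is to follow verbatim the template established in the proofs of Theorems~\ref{thm:ntp1} and~\ref{thm:ntp2}: prove (i) in detail and obtain (ii) from the order-reversing duality $\mathcal{P}\mapsto\mathcal{P}^*$, which swaps minimal and maximal elements and transposes the one-form, so that the entire argument dualizes. For (i), set $\mathcal{P}=\mathcal{P}_3$ and $\varphi=\varphi_{\mathcal{P}_3}$. First I would record the combinatorial data: $\mathcal{P}_3$ has unique minimal element $1$ and maximal elements $5,6$, so $Ext(\mathcal{P})=\{1,5,6\}$ and $|Ext(\mathcal{P})|=3$, giving \textbf{(P1)}. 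The directed graph $\Gamma_\varphi$ has edge set $\{(1,5),(1,6),(2,3),(2,4),(2,5)\}$, a spanning tree of the comparability graph, so $\varphi$ is small, i.e.\ \textbf{(F1)}. Reading off sources and sinks gives $D_\varphi(\mathcal{P})=\{1,2\}$, $U_\varphi(\mathcal{P})=\{3,4,5,6\}$, and $O_\varphi(\mathcal{P})=\emptyset$; since $\{1,2\}$ is an order ideal and $\{3,4,5,6\}$ is a filter, \textbf{(F2)} holds, and as $(1,5),(1,6)$ are the only relations between elements of $Ext(\mathcal{P})$ and both lie in $\Gamma_\varphi$, \textbf{(F3)} holds. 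As in the earlier proofs, once $\varphi$ is shown to be Frobenius, \textbf{(P2)} follows from Theorem~2 of \textbf{\cite{Binary}} together with Theorem~3.3(1) of \textbf{\cite{Ooms}}.

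It then remains to verify \textbf{(F4)} and that $\varphi$ is Frobenius on $\mathfrak{g}_A(\mathcal{P})$. I would take $B=\sum k_{i,j}E_{i,j}\in\ker(d\varphi)$ and list the equations $\varphi([E_{p,q},B])=0$ for all $p\preceq q$, organized into cascading groups exactly as in Theorems~\ref{thm:ntp1} and~\ref{thm:ntp2}: a first group forcing a batch of off-diagonal coordinates to vanish, subsequent groups using those vanishings to kill the remaining off-diagonal coordinates, and a final group forcing $k_{i,i}=k_{j,j}$ for all $i,j\in\mathcal{P}$. This simultaneously establishes \textbf{(F4)}. Imposing the type-A trace condition $\sum_{i=1}^6 k_{i,i}=0$ then forces $B=0$, so $\ker_A(d\varphi)=\{0\}$ and $\varphi$ is Frobenius; hence $(\mathcal{P}_3,\varphi_{\mathcal{P}_3})$ is a toral-pair.

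The only real work is the bookkeeping in solving the linear system, and the main obstacle is merely arranging the defining equations so that the dependencies cascade correctly, each group's conclusions feeding the next; conceptually nothing goes beyond what already appears in Appendix~A. Finally, (ii) is handled by applying the same argument to the dual poset $\mathcal{P}_3^*$, whose Hasse diagram is the vertical reflection of that of $\mathcal{P}_3$ and whose one-form $\varphi_{\mathcal{P}_3^*}$ is obtained by the corresponding relabeling, so that the groups of kernel equations become the transposes of those in (i) and the same conclusion $\ker_A(d\varphi_{\mathcal{P}_3^*})=\{0\}$ results.
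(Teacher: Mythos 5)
Your proposal matches the paper's Appendix~A proof essentially verbatim: verify \textbf{(P1)}, \textbf{(F1)}--\textbf{(F3)} directly (your identifications $D_\varphi(\mathcal{P})=\{1,2\}$, $U_\varphi(\mathcal{P})=\{3,4,5,6\}$, and the edges $(1,5),(1,6)$ in $\Gamma_\varphi$ all agree with the paper), deduce \textbf{(P2)} from Theorem~2 of \textbf{\cite{Binary}} and Theorem~3.3(1) of \textbf{\cite{Ooms}} once $\varphi$ is Frobenius, and then kill $\ker_A(d\varphi)$ via cascading groups of equations $\varphi([E_{p,q},B])=0$, handling (ii) by the symmetric/dual argument. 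The only thing left implicit is the explicit list of kernel equations and their cascade (the paper's Groups 1--4), which is routine bookkeeping that does work out exactly as you predict.
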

\begin{proof}
We prove (\textit{i}), as (\textit{ii}) follows via a symmetric argument. Let $\mathcal{P}=\mathcal{P}_3$ and $\varphi=\varphi_{\mathcal{P}_3}$. Clearly, $|Ext(\mathcal{P})|=3,$ so \textbf{(P1)} of Definition~\ref{def:toralpair} is satisfied. Further, $\varphi$ is clearly small, $U_\varphi(\mathcal{P})=\{3,4,5,6\}$ is a filter of $\mathcal{P}$, $D_\varphi(\mathcal{P})=\{1,2\}$ is an ideal of $\mathcal{P}$, $O_\varphi(\mathcal{P})=\emptyset$, and $\Gamma_\varphi$ contains the only edges, $(1,5)$ and $(1,6)$, between elements of $Ext(\mathcal{P})$. Thus, $\varphi$ satisfies $\mathbf{(F1)-(F3)}$ of Definition~\ref{def:toralpair}. Assuming $\varphi$ is Frobenius, applying Theorem 2 of \textbf{\cite{Binary}} and Theorem 3.3(1) of \textbf{\cite{Ooms}}, it follows that $\mathcal{P}$ satisfies \textbf{(P2)} of Definition~\ref{def:toralpair}. Therefore, all that remains to show is that $\varphi$ satisfies \textbf{(F4)} of Definition~\ref{def:toralpair} and is Frobenius on $\mathfrak{g}_A(\mathcal{P})$.

Let $B=\sum k_{i,j}E_{i,j}\in\ker(d\varphi)$ and consider the following groups of conditions $B$ must satisfy:
\bigskip

\noindent
\textbf{Group 1:}
\begin{itemize}
    \item $\varphi([E_{3,3},B])=-k_{2,3}=0$,
    \item $\varphi([E_{4,4},B])=-k_{2,4}=0$,
    \item $\varphi([E_{6,6},B])=-k_{1,6}=0$,
    \item $\varphi([E_{1,3},B])=k_{3,5}=0$,
    \item $\varphi([E_{1,4},B])=k_{4,5}=0$,
    \item $\varphi([E_{2,6},B])=-k_{1,2}=0$.
\end{itemize}
\textbf{Group 2:}
\begin{itemize}
    \item $\varphi([E_{1,1},B])=k_{1,5}+k_{1,6}=0$, 
    \item $\varphi([E_{2,2},B])=k_{2,3}+k_{2,4}+k_{2,5}=0$,
    \item $\varphi([E_{3,5},B])=-k_{1,3}-k_{2,3}=0$,
    \item $\varphi([E_{4,5},B])=-k_{1,4}-k_{2,4}=0$,
\end{itemize}
\textbf{Group 3:}
\begin{itemize}
    \item $\varphi([E_{5,5},B])=-k_{1,5}-k_{2,5}=0$,
    \item $\varphi([E_{1,2},B])=k_{2,5}+k_{2,6}=0$,
\end{itemize}
\textbf{Group 4:}
\begin{itemize}
    \item $\varphi([E_{1,5},B])=k_{5,5}-k_{1,1}=0$,
    \item $\varphi([E_{1,6},B])=k_{6,6}-k_{1,1}=0$,
    \item $\varphi([E_{2,3},B])=k_{3,3}-k_{2,2}+k_{3,5}=0$,
    \item $\varphi([E_{2,4},B])=k_{4,4}-k_{2,2}+k_{4,5}=0$,
    \item $\varphi([E_{2,5},B])=k_{5,5}-k_{2,2}+k_{1,2}=0$,
\end{itemize}
The restrictions of the equations in Group 1 immediately imply that
\begin{equation}\label{eqn:p31}
k_{1,2}=k_{1,6}=k_{2,3}=k_{2,4}=k_{3,5}=k_{4,5}=0.
\end{equation} 
Combining the Group 1 restrictions to those of Group 2, we may conclude that
\begin{equation}\label{eqn:p32}
k_{1,3}=k_{1,4}=k_{1,5}=k_{2,5}=0.
\end{equation} 
Combining the Group 1 and 2 restrictions to those of Group 3, we may conclude that
\begin{equation}\label{eqn:p33}
k_{2,6}=0.
\end{equation} 
Finally, combining the restrictions of Groups 1, 2, and 3 to those of Group 4, we find that
\begin{equation}\label{eqn:p34}
k_{i,i}=k_{j,j},\text{ for all }i,j\in\mathcal{P}.
\end{equation}
Equations (\ref{eqn:p31}) -- (\ref{eqn:p34}) establish that $\varphi$ satisfies \textbf{(F4)} of Definition~\ref{def:toralpair}. Now, considering Remark~\ref{rem:funct}, combining the type-A trace condition $$\sum_{i=1}^6k_{i,i}=0$$ with (\ref{eqn:p31}) -- (\ref{eqn:p34}), we find that $\ker_A(d\varphi)=\{0\}$. Therefore, $\mathfrak{g}_A(\mathcal{P})$ is Frobenius with Frobenius one-form $\varphi$, and $(\mathcal{P},\varphi)$ is a toral-pair.
\end{proof}

\begin{theorem}\label{thm:ntp4}
Each of the following pairs, consisting of a poset $\mathcal{P}$ and a one-form $\varphi_{\mathcal{P}}$, constitutes a toral-pair $(\mathcal{P},\varphi_{\mathcal{P}})$.
\begin{enumerate} [label=\textup(\roman*\textup)]
        \item $\mathcal{P}_4=\{1,2,3,4,5,6\}$ with $1\prec 2, 3\prec 6$; $3\prec 4,5$; $2\prec 4$, and $$\varphi_{\mathcal{P}_4}=E^*_{1,4}+E^*_{1,6}+E^*_{2,4}+E^*_{2,5}+E^*_{3,6},$$
        \item $\mathcal{P}_4^*=\{1,2,3,4,5,6\}$ with $1\prec 4, 5\prec 6$; $2,3\prec 4$; $3\prec 5$, and $$\varphi_{\mathcal{P}_4^*}=E^*_{1,5}+E^*_{1,6}+E^*_{2,4}+E^*_{3,4}+E^*_{3,6}.$$
\end{enumerate}
\end{theorem}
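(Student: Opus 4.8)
The plan is to follow the template of the proofs of Theorems~\ref{thm:ntp1}, \ref{thm:ntp2}, and \ref{thm:ntp3}: prove $(i)$ in full and obtain $(ii)$ by the order-reversing symmetry carrying $\mathcal{P}_4$ to its dual $\mathcal{P}_4^*$ and $\varphi_{\mathcal{P}_4}$ to $\varphi_{\mathcal{P}_4^*}$, since transposing the defining relations interchanges minimal and maximal elements (hence the roles of $U_\varphi$ and $D_\varphi$) and preserves every condition of Definition~\ref{def:toralpair}. So fix $\mathcal{P}=\mathcal{P}_4$ and $\varphi=\varphi_{\mathcal{P}_4}$. The combinatorial hypotheses are read directly off the Hasse diagram in Figure~\ref{fig:bb2}: $Ext(\mathcal{P})=\{1,4,6\}$, so $|Ext(\mathcal{P})|=3$ and \textbf{(P1)} holds; the support graph $\Gamma_\varphi$ has five edges on the six vertices and is connected, hence a spanning tree of the comparability graph, so $\varphi$ is small and \textbf{(F1)} holds; this tree partitions $\mathcal{P}$ with $U_\varphi(\mathcal{P})=\{4,5,6\}$ a filter, $D_\varphi(\mathcal{P})=\{1,2,3\}$ an order ideal, and $O_\varphi(\mathcal{P})=\emptyset$, giving \textbf{(F2)}; and $\Gamma_\varphi$ contains the edges $(1,4)$ and $(1,6)$, which are precisely the members of $Rel_E(\mathcal{P})$, giving \textbf{(F3)}. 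Exactly as in the earlier proofs, once $\varphi$ is known to be Frobenius, Theorem~2 of \textbf{\cite{Binary}} together with Theorem~3.3(1) of \textbf{\cite{Ooms}} yields the binary-spectrum condition \textbf{(P2)}.

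It therefore remains to verify \textbf{(F4)} and that $\varphi$ is Frobenius on $\mathfrak{g}_A(\mathcal{P}_4)$, and I would do both at once by computing the kernel. Take $B=\sum k_{i,j}E_{i,j}\in\ker(d\varphi)$ inside the full incidence algebra $\mathfrak{g}(\mathcal{P}_4)$ (as in Remark~\ref{rem:funct}) and impose $\varphi([E_{p,q},B])=0$ for all $p\preceq q$; using $[E_{p,q},E_{r,s}]=\delta_{q,r}E_{p,s}-\delta_{s,p}E_{r,q}$, each such equation is a short linear relation among the $k_{i,j}$ whose support lies in $\{(1,4),(1,6),(2,4),(2,5),(3,6)\}$, the support of $\varphi$. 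Organizing these equations into nested groups — beginning with the diagonal equations $\varphi([E_{p,p},B])=0$ and the covering-relation equations nearest the extremal elements $1$, $4$, $6$, and working inward — each group together with the conclusions of the previous ones should force a further batch of off-diagonal coefficients $k_{i,j}$ to vanish, until all off-diagonal entries are zero; the equations indexed by the five relations in the support of $\varphi$ then force all diagonal entries $k_{i,i}$ to be equal. This is precisely property \textbf{(F4)}. Finally, adjoining the type-A trace condition $\sum_{i=1}^{6}k_{i,i}=0$ forces every $k_{i,i}=0$, so $\ker_A(d\varphi)=\{0\}$; hence $\mathfrak{g}_A(\mathcal{P}_4)$ is Frobenius, $\varphi_{\mathcal{P}_4}$ is a Frobenius one-form, and $(\mathcal{P}_4,\varphi_{\mathcal{P}_4})$ is a toral-pair. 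Transposing this computation settles $(ii)$.

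The main obstacle is the bookkeeping of this linear system. Unlike the tree-shaped building blocks of Figure~\ref{fig:bb}, the posets $\mathcal{P}_4$ and $\mathcal{P}_4^*$ contain a six-element cycle in their Hasse diagrams, so the equation system is not transparently triangular and one must choose an order of elimination in which the cascade still closes — in particular, one must check that no coefficient unexpectedly survives because of a bracket $[E_{p,q},E_{r,s}]$ running around the cycle. I would manage this by handling first the equations at the maximal elements $4$ and $6$ and the minimal element $1$, then the covering relations in order of increasing distance from an extremal element, and only last the five equations indexed by the support of $\varphi$, which link the diagonal entries. With the right order the remaining arguments are routine linear algebra, and the symmetry argument then transfers everything to $(ii)$.
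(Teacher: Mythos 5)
Your proposal matches the paper's own proof in Appendix A essentially step for step: verify \textbf{(P1)} and \textbf{(F1)}--\textbf{(F3)} by inspection of the Hasse diagram, deduce \textbf{(P2)} from Theorem 2 of \textbf{\cite{Binary}} together with Theorem 3.3(1) of \textbf{\cite{Ooms}} once $\varphi$ is known to be Frobenius, and then establish \textbf{(F4)} and $\ker_A(d\varphi)=\{0\}$ simultaneously by a staged elimination on the equations $\varphi([E_{p,q},B])=0$, with $(ii)$ following by the order-reversing symmetry. The only part left implicit is the explicit bookkeeping, which the paper organizes into four nested groups of relations and which closes exactly as you predict (all off-diagonal $k_{i,j}$ vanish, all $k_{i,i}$ coincide, and the trace condition then kills $B$), so there is no gap in the approach.
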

\begin{proof}
We prove (\textit{i}), as (\textit{ii}) follows via a symmetric argument. Let $\mathcal{P}=\mathcal{P}_4$ and $\varphi=\varphi_{\mathcal{P}_4}$. Clearly, $|Ext(\mathcal{P})|=3,$ so \textbf{(P1)} of Definition~\ref{def:toralpair} is satisfied. Further, $\varphi$ is clearly small, $U_\varphi(\mathcal{P})=\{4,5,6\}$ is a filter of $\mathcal{P}$, $D_\varphi(\mathcal{P})=\{1,2,3\}$ is an ideal of $\mathcal{P}$, $O_\varphi(\mathcal{P})=\emptyset$, and $\Gamma_\varphi$ contains the only edges, $(1,4)$ and $(1,6)$, between elements of $Ext(\mathcal{P})$. Thus, $\varphi$ satisfies $\mathbf{(F1)-(F3)}$ of Definition~\ref{def:toralpair}. Assuming $\varphi$ is Frobenius, applying Theorem 2 of \textbf{\cite{Binary}} and Theorem 3.3(1) of \textbf{\cite{Ooms}}, it follows that $\mathcal{P}$ satisfies \textbf{(P2)} of Definition~\ref{def:toralpair}. Therefore, all that remains to show is that $\varphi$ satisfies \textbf{(F4)} of Definition~\ref{def:toralpair} and is Frobenius on $\mathfrak{g}_A(\mathcal{P})$.

Let $B=\sum k_{i,j}E_{i,j}\in\ker(d\varphi)$ and consider the following groups of conditions $B$ must satisfy:
\bigskip

\noindent
\textbf{Group 1:}
\begin{itemize}
    \item $\varphi([E_{3,3},B])=k_{3,6}=0$,
    \item $\varphi([E_{5,5},B])=-k_{2,5}=0$,
    \item $\varphi([E_{1,5},B])=k_{5,6}=0$,
    \item $\varphi([E_{2,6},B])=-k_{1,2}=0$,
    \item $\varphi([E_{3,4},B])=-k_{1,3}=0$,
    \item $\varphi([E_{5,6},B])=-k_{1,5}=0$,
\end{itemize}
\textbf{Group 2:}
\begin{itemize}
    \item $\varphi([E_{2,2},B])=k_{2,4}+k_{2,5}=0$, 
    \item $\varphi([E_{6,6},B])=-k_{1,6}-k_{3,6}=0$, 
    \item $\varphi([E_{1,3},B])=k_{3,4}+k_{3,6}=0$,
\end{itemize}
\textbf{Group 3:}
\begin{itemize}
    \item $\varphi([E_{1,1},B])=k_{1,4}+k_{1,6}=0$, 
    \item $\varphi([E_{4,4},B])=-k_{1,4}-k_{2,4}=0$,
    \item $\varphi([E_{1,2},B])=k_{2,4}+k_{2,6}=0$, 
\end{itemize}
\textbf{Group 4:}
\begin{itemize}
    \item $\varphi([E_{1,4},B])=k_{4,4}-k_{1,1}=0$,
    \item $\varphi([E_{1,6},B])=k_{6,6}-k_{1,1}=0$,
    \item $\varphi([E_{2,4},B])=k_{4,4}-k_{2,2}-k_{1,2}=0$,
    \item $\varphi([E_{2,5},B])=k_{5,5}-k_{2,2}-k_{1,2}=0$,
    \item $\varphi([E_{3,6},B])=k_{6,6}-k_{3,3}-k_{1,3}=0$,
\end{itemize}
The restrictions of the equations in Group 1 immediately imply that
\begin{equation}\label{eqn:p41}
k_{1,2}=k_{1,3}=k_{1,5}=k_{2,5}=k_{3,6}=k_{5,6}=0.
\end{equation} 
Combining the Group 1 restrictions to those of Group 2, we may conclude that
\begin{equation}\label{eqn:p42}
k_{1,6}=k_{2,4}=k_{3,4}=0.
\end{equation} 
Combining the Group 1 and 2 restrictions to those of Group 3, we may conclude that
\begin{equation}\label{eqn:p43}
k_{1,4}=k_{2,6}=0.
\end{equation} 
Finally, combining the restrictions of Groups 1, 2, and 3 to those of Group 4, we find that
\begin{equation}\label{eqn:p44}
k_{i,i}=k_{j,j},\text{ for all }i,j\in\mathcal{P}.
\end{equation}
Equations (\ref{eqn:p41}) -- (\ref{eqn:p44}) establish that $\varphi$ satisfies \textbf{(F4)} of Definition~\ref{def:toralpair}. Now, considering Remark~\ref{rem:funct}, combining the type-A trace condition $$\sum_{i=1}^6k_{i,i}=0$$ with (\ref{eqn:p41}) -- (\ref{eqn:p44}), we find that $\ker_A(d\varphi)=\{0\}$. Therefore, $\mathfrak{g}_A(\mathcal{P})$ is Frobenius with Frobenius one-form $\varphi$, and $(\mathcal{P},\varphi)$ is a toral-pair.
\end{proof}

\section{Appendix B}

This appendix contains the proof of Theorem~\ref{thm:ctpappendB}.

\begin{theorem}
Each of the following pairs, consisting of a poset $\mathcal{P}$ and a one-form $\varphi_{\mathcal{P}},$ constitutes a contact toral-pair $(\mathcal{P},\varphi_{\mathcal{P}}).$
\begin{enumerate}
    \item[\textup(i\textup)] $\mathcal{P}_{4,n}=\{1,\dots,n\}$ with $1\preceq 2\preceq \dots\preceq n-1$ as well as $1\preceq 2\preceq \dots\preceq \lfloor\frac{n}{2}\rfloor+1\preceq n,$ and $$\varphi_{\mathcal{P}_{4,n}}=E_{1,1}^*+\sum_{i=1}^{\lfloor\frac{n-1}{2}\rfloor}E_{i,n-i}^*+\sum_{i=1}^{\lfloor\frac{n}{2}\rfloor}E_{i,n}^*$$
    \item[\textup(ii\textup)] $\mathcal{P}_{4,n}^*=\{1,\dots,n\}$ with $2\preceq 3\preceq \dots\preceq n$ as well as $1\preceq\lceil\frac{n}{2}\rceil\preceq \lceil\frac{n}{2}\rceil+1\preceq\dots\preceq n,$ and $$\varphi_{\mathcal{P}_{4,n}^*}=E_{1,1}^*+\sum_{i=2}^{\lceil\frac{n}{2}\rceil}E_{i,n-i+2}^*+\sum_{i=\lceil\frac{n}{2}\rceil+1}^{n}E_{1,i}^*.$$
\end{enumerate}
\end{theorem}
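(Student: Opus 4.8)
The plan is to verify the seven conditions of Definition~\ref{def:contacttoral} for $(\mathcal{P}_{4,n},\varphi_{\mathcal{P}_{4,n}})$, from which part~(ii) follows by the entirely analogous computation on the dual poset $\mathcal{P}_{4,n}^*$. The combinatorial part is routine: for $n\ge 5$ one reads off $Ext(\mathcal{P}_{4,n})=\{1,n-1,n\}$ with $1$ the unique minimal element and $n-1,n$ the two maximal elements (and for $n=3,4$ the poset is a chain already handled by Theorems~\ref{thm:2chain} and~\ref{thm:3chain}), so \textbf{(CP1)} and \textbf{(CP2)} hold, and \textbf{(CF1)} is immediate. Writing $\psi:=\varphi_{\mathcal{P}_{4,n}}-E_{1,1}^*$, its support $\{(i,n-i):1\le i\le\lfloor\frac{n-1}{2}\rfloor\}\cup\{(i,n):1\le i\le\lfloor\frac{n}{2}\rfloor\}$ has $\lfloor\frac{n-1}{2}\rfloor+\lfloor\frac{n}{2}\rfloor=n-1$ elements and is easily checked (with a clean even/odd split in $n$) to be connected and acyclic, hence a spanning subtree of the comparability graph, giving \textbf{(CF2)}; its sources are $\{1,\dots,\lfloor\frac n2\rfloor\}$, an order ideal, its sinks are $\{n\}\cup\{\lfloor\frac n2\rfloor+1,\dots,n-1\}$, a filter, and $O_\psi(\mathcal{P}_{4,n})=\emptyset$, giving \textbf{(CF3)}; and $\Gamma_\psi$ contains the edges $(1,n-1)$ and $(1,n)$, which are all of $Rel_E(\mathcal{P}_{4,n})$, giving \textbf{(CF4)}.

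The substance is showing that $\varphi:=\varphi_{\mathcal{P}_{4,n}}$ is a contact form on $\mathfrak{g}_A(\mathcal{P}_{4,n})$, and here I would follow the template of Theorems~\ref{thm:3chain} and~\ref{thm:fork}: take $B=\sum k_{i,j}E_{i,j}\in\ker(d\varphi)\subseteq\mathfrak{g}(\mathcal{P}_{4,n})$ and impose $\varphi([E_{p,q},B])=0$ for every $p\preceq q$, organized into three passes. In Pass~1, running the equations whose test element $E_{p,q}$ is \emph{not} a summand of $\varphi$ in a suitable order along the chain $1\prec\cdots\prec n-1$, deduce $k_{i,j}=0$ for every off-diagonal $(i,j)$ except for a bounded list of ``exceptional'' coefficients clustered at the branch vertex $\lfloor\frac n2\rfloor+1$. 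In Pass~2, use the diagonal equations $\varphi([E_{p,p},B])=0$ together with the equations attached to the tree edges to conclude that these exceptional coefficients, and all the diagonal entries $k_{i,i}$, are determined up to a single scalar (all $k_{i,i}$ equal to one value $\lambda$, except the entry at the branch vertex, which is a fixed integer multiple of $\lambda$). In Pass~3, impose the trace condition $\sum_i k_{i,i}=0$, which pins $\lambda$ as a multiple of $k_{1,1}$; thus $\ker_A(d\varphi)$ is the line parametrized by $k_{1,1}$, spanned by an explicit $B_n$, and as in the proofs of Theorems~\ref{thm:3chain} and~\ref{thm:fork} the diagonal equations force every summand $E_{p,q}^*$ with $p\prec q$ of $\varphi$ to annihilate $B_n$, so $\varphi(B_n)=E_{1,1}^*(B_n)=k_{1,1}\neq 0$. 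Since $\dim\mathfrak{g}_A(\mathcal{P}_{4,n})$ is odd (a direct count) and $\dim\ker_A(d\varphi)=1$, the parity of the index forces $\ind(\mathfrak{g}_A(\mathcal{P}_{4,n}))=1$, and Lemma~\ref{lem:kernel} then yields that $\varphi$ is contact.

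The main obstacle is Passes~1--2. Unlike the fixed small posets of Theorems~\ref{thm:3chain} and~\ref{thm:fork}, here there are $\Theta(n^2)$ defining equations, so the vanishing of the off-diagonal coefficients and the rigidity of the diagonal must be established by an induction along the chain $1\prec 2\prec\cdots\prec n-1$, treating the extra relations $i\prec n$ and the branch vertex $\lfloor\frac n2\rfloor+1$ separately, with the parity of $n$ slightly altering which coefficients ultimately survive in $\ker(d\varphi)$. Once the shape of $\ker(d\varphi)$ is correctly guessed, each individual equation is a one-line verification; the real work lies in arranging the bookkeeping so that the induction closes uniformly in $n$, after which part~(ii) is obtained by the parallel computation on $\mathcal{P}_{4,n}^*$ (whose small/filter/ideal structure mirrors that of $\mathcal{P}_{4,n}$ under duality).
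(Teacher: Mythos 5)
Your proposal is correct and follows essentially the same route as the paper's Appendix B proof: verify \textbf{(CP1)}--\textbf{(CP2)} and \textbf{(CF1)}--\textbf{(CF4)} directly, then compute $\ker(d\varphi)$ by organizing the equations $\varphi([E_{p,q},B])=0$ into successive groups (your three passes correspond to the paper's Groups 1--3, split by the parity of $n$), impose the trace condition to pin down a one-dimensional kernel spanned by an explicit $B$ with $\varphi(B)=E_{1,1}^*(B)\neq 0$, and conclude via Lemma~\ref{lem:kernel}. Your explicit parity argument for $\ind(\mathfrak{g}_A(\mathcal{P}_{4,n}))=1$ is left implicit in the paper but is the standard justification, so there is no substantive difference.
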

\begin{proof}
We prove ($i$), as ($ii$) follows via a symmetric argument. Let $\mathcal{P}=\mathcal{P}_{4,n}$ and $\varphi=\varphi_{\mathcal{P}_{4,n}}.$ Note immediately that $\mathcal{P}$ is connected and $|Ext(\mathcal{P})|=3,$ so $\mathbf{(CP1)}$ and $\mathbf{(CP2)}$ are satisfied. Now, with regard to $\varphi,$ we have the following:
\begin{itemize}
    \item $E_{p,p}^*$ is a nonzero summand of $\varphi$ precisely when $p=1,$
    \item $\varphi-E_{1,1}^*$ is clearly small,
    \item $U_{\varphi-E_{1,1}^*}(\mathcal{P})=\left[\lfloor\frac{n}{2}\rfloor+1,n\right]\cap\mathbb{Z}$ is a filter of $\mathcal{P}$, $D_{\varphi-E_{1,1}^*}(\mathcal{P})=\left[1,\lfloor\frac{n}{2}\rfloor\right]\cap\mathbb{Z}$ is an order ideal of $\mathcal{P}$, and $O_{\varphi-E_{1,1}^*}(\mathcal{P})=\emptyset,$ and
    \item $\Gamma_{\varphi-E_{1,1}^*}$ contains the only edges, $(1,n-1)$ and $(1,n),$ between elements of $Ext(\mathcal{P}).$
\end{itemize}
Thus, $\mathbf{(CF1)-(CF4)}$ are satisfied, and all that remains to show is that $\varphi$ is a contact form. We claim that the kernel of $d\varphi$ is one-dimensional and generated by an element of the following form:
$$B=\begin{cases}
\displaystyle\sum_{i=1}^{n}E_{i,i} - \frac{n}{2}(E_{\frac{n}{2},\frac{n}{2}}+E_{\frac{n}{2}+1,\frac{n}{2}+1})+\frac{n}{2}E_{\frac{n}{2}-1,\frac{n}{2}}-\sum_{i=\frac{n}{2}+2}^{n-1}\frac{n}{2}E_{\frac{n}{2}+1,i}+\frac{n}{2}E_{\frac{n}{2}+1,n}, & n\text{\ even};\\
\displaystyle\sum_{i=1}^{n}E_{i,i}-nE_{\lfloor\frac{n}{2}\rfloor+1,\lfloor\frac{n}{2}\rfloor+1}-\sum_{i=\lfloor\frac{n}{2}\rfloor+2}^{n-1}nE_{\lfloor\frac{n}{2}\rfloor+1,i}+nE_{\lfloor\frac{n}{2}\rfloor+1,n}, & n\text{\ odd}.
\end{cases}$$
Note that $\varphi(B)\neq 0$ in both cases, so once the claim has been proven, an application of Lemma~\ref{lem:kernel} will complete the proof.

\medskip
\noindent
\textbf{Case 1:} $n$ even. Let $B=\sum k_{i,j}E_{i,j}\in\ker(d\varphi),$ and consider the following groups of conditions $B$ must satisfy:

\bigskip
\noindent
\textbf{Group 1:}
\begin{itemize}
    \item $\varphi([E_{\frac{n}{2},\frac{n}{2}},B])=k_{\frac{n}{2},n}=0,$
    \item $\varphi([E_{n-i,n-i},B])=-k_{i,n-i}=0,$ for $1\leq i\leq \frac{n}{2}-1,$
    \item $\varphi([E_{n-j,n-i},B])=-k_{i,n-j}=0,$ for $1\leq i<j\leq \frac{n}{2}-1,$
    \item $\varphi([E_{i,n-j},B])=k_{n-j,n-i}=0,$ for $1\leq i<j\leq \frac{n}{2}-2,$
    \item $\varphi([E_{i,n-j},B])=-k_{j,i}=0,$ for $1\leq j<i\leq \frac{n}{2}-1.$
\end{itemize}
    
\bigskip
\noindent
\textbf{Group 2:}
\begin{itemize}
    \item $\varphi([E_{n,n},B])=-\displaystyle\sum_{i=1}^{\frac{n}{2}}k_{i,n}=0,$
    \item $\varphi([E_{i,i},B])=k_{i,n-i}+k_{i,n}=0,$ for $1\leq i\leq \frac{n}{2}-1,$
    \item $\varphi([E_{\frac{n}{2}+1,n},B])=-\displaystyle\sum_{i=1}^{\frac{n}{2}}k_{i,\frac{n}{2}+1}=0,$
    \item $\varphi([E_{i,j},B])=k_{j,n-i}+k_{j,n}=0,$ for $1\leq i<j\leq \frac{n}{2}.$
\end{itemize}

\bigskip
\noindent
\textbf{Group 3:}
\begin{itemize}
    \item $\varphi([E_{i,n-i},B])=k_{n-i,n-i}-k_{i,i}=0,$ for $1\leq i\leq \frac{n}{2}-2,$
    \item $\varphi([E_{\frac{n}{2}-1,\frac{n}{2}+1},B])=k_{\frac{n}{2}+1,\frac{n}{2}+1}-k_{\frac{n}{2}-1,\frac{n}{2}-1}+k_{\frac{n}{2}+1,n}=0,$
    \item $\varphi([E_{i,\frac{n}{2}+1},B])=k_{\frac{n}{2}+1,n-i}+k_{\frac{n}{2}+1,n}=0,$ for $1\leq i\leq \frac{n}{2}-2,$
    \item $\varphi([E_{\frac{n}{2},\frac{n}{2}+1},B])=-k_{\frac{n}{2}-1,\frac{n}{2}}+k_{\frac{n}{2}+1,n}=0,$
    \item $\varphi([E_{i,n},B])=k_{n,n}-k_{i,i}-\displaystyle\sum_{j=1}^{i-1}k_{j,i}=0,$ for $1\leq i\leq \frac{n}{2}.$
\end{itemize}

The restrictions of Groups 1 and 2 imply that $B$ has the form $$B=\sum_{i=1}^nk_{i,i}E_{i,i}+\sum_{i=\frac{n}{2}+2}^nk_{\frac{n}{2}+1,i}E_{\frac{n}{2}+1,i}+k_{\frac{n}{2}-1,\frac{n}{2}}E_{\frac{n}{2}-1,\frac{n}{2}}.$$
Simplifying the restrictions of Group 3, we have that \begin{align}
k_{i,i}&=k_{n-i,n-i},\text{\ for\ }1\leq i\leq\frac{n}{2}-2,\\
k_{\frac{n}{2}+1,n-i}&=-k_{\frac{n}{2}+1,n},\text{\ for\ }1\leq i\leq\frac{n}{2}-2,\\
k_{n,n}=k_{i,i}&=k_{\frac{n}{2},\frac{n}{2}}+k_{\frac{n}{2}-1,\frac{n}{2}},\text{\ for\ }1\leq i\leq \frac{n}{2}-1,\\
k_{\frac{n}{2}-1,\frac{n}{2}-1}&=k_{\frac{n}{2}+1,\frac{n}{2}+1}+k_{\frac{n}{2}+1,n},\\
k_{\frac{n}{2}+1,n}&=k_{\frac{n}{2}-1,\frac{n}{2}}.
\end{align}
Equations (18) and (20) imply that $k_{i,i}=k_{\frac{n}{2},\frac{n}{2}}+k_{\frac{n}{2}-1,\frac{n}{2}},$ for all $i\in\left(\left[1,\frac{n}{2}-1\right]\cup\left[\frac{n}{2}+2,n\right]\right)\cap\mathbb{Z}.$\\ Equations (20), (21), and (22) imply that $k_{\frac{n}{2},\frac{n}{2}}=k_{\frac{n}{2}+1,\frac{n}{2}+1}.$\\ Equations (19) and (22) imply that $k_{\frac{n}{2},n}=k_{\frac{n}{2}-1,\frac{n}{2}}=-k_{\frac{n}{2}+1,n-i},$ for all $1\leq i\leq \frac{n}{2}-2.$\\ Combining these conditions with the type-A trace condition $\sum_{i=1}^nk_{i,i}=0,$ we choose $k_{i,i}=1,$ for all $i\in\left(\left[1,\frac{n}{2}-1\right]\cup\left[\frac{n}{2}+2,n\right]\right)\cap\mathbb{Z},$ which then determines $k_{\frac{n}{2},\frac{n}{2}}=k_{\frac{n}{2}+1,\frac{n}{2}+1}=-\frac{n}{2}+1$ and $k_{\frac{n}{2}-1,\frac{n}{2}}=-k_{\frac{n}{2}+1,i}=k_{\frac{n}{2},n}=\frac{n}{2},$ for all $\frac{n}{2}+2\leq i\leq n-1.$ Therefore, $B$ can be written as in the claim.

\bigskip
\noindent
\textbf{Case 2:} $n$ odd. Let $B=\sum k_{i,j}E_{i,j}\in\ker(d\varphi),$ and consider the following groups of conditions $B$ must satisfy:

\bigskip
\noindent
\textbf{Group 1:}
\begin{itemize}
    \item $\varphi([E_{n-i,n-i},B])=-k_{i,n-i}=0,$ for $1\leq i\leq\lfloor\frac{n}{2}\rfloor,$
    \item $\varphi([E_{n-j,n-i},B])=-k_{i,n-j}=0,$ for $1\leq i<j\leq\lfloor\frac{n}{2}\rfloor,$
    \item $\varphi([E_{i,n-j},B])=k_{n-j,n-i}=0,$ for $1\leq i<j\leq\lfloor\frac{n}{2}\rfloor-1,$
    \item $\varphi([E_{i,n-j},B])=-k_{j,i}=0,$ for $1\leq j<i\leq \lfloor\frac{n}{2}\rfloor.$
\end{itemize}

\bigskip
\noindent
\textbf{Group 2:}
\begin{itemize}
    \item $\varphi([E_{n,n},B])=-\displaystyle\sum_{i=1}^{\lfloor\frac{n}{2}\rfloor}k_{i,n}=0,$
    \item $\varphi([E_{i,i},B])=k_{i,n-i}+k_{i,n}=0,$ for $1\leq i\leq\lfloor\frac{n}{2}\rfloor,$
    \item $\varphi([E_{\lfloor\frac{n}{2}\rfloor+1,n},B])=-\displaystyle\sum_{i=1}^{\lfloor\frac{n}{2}\rfloor}k_{i,\lfloor\frac{n}{2}\rfloor+1}=0,$
    \item $\varphi([E_{i,j},B])=k_{j,n-i}+k_{j,n}=0,$ for $1\leq i<j\leq \lfloor\frac{n}{2}\rfloor.$
\end{itemize}

\bigskip
\noindent
\textbf{Group 3:}
\begin{itemize}
    \item $\varphi([E_{i,n-i},B])=k_{n-i,n-i}-k_{i,i}=0,$ for $1\leq i\leq \lfloor\frac{n}{2}\rfloor-1,$
    \item $\varphi([E_{\lfloor\frac{n}{2}\rfloor,\lfloor\frac{n}{2}\rfloor+1},B])=k_{\lfloor\frac{n}{2}\rfloor+1,\lfloor\frac{n}{2}\rfloor+1}-k_{\lfloor\frac{n}{2}\rfloor,\lfloor\frac{n}{2}\rfloor}+k_{\lfloor\frac{n}{2}\rfloor+1,n}=0,$
    \item $\varphi([E_{i,\lfloor\frac{n}{2}\rfloor+1},B])=k_{\lfloor\frac{n}{2}\rfloor+1,n-i}+k_{\lfloor\frac{n}{2}\rfloor+1,n}=0,$ for $1\leq i\leq \lfloor\frac{n}{2}\rfloor-1,$
    \item $\varphi([E_{i,n},B])=k_{n,n}-k_{i,i}-\displaystyle\sum_{j=1}^{i-1}k_{j,i}=0,$ for $1\leq i\leq \lfloor\frac{n}{2}\rfloor.$
\end{itemize}

The restrictions of Groups 1 and 2 imply that $B$ has the form $$B=\sum_{i=1}^nk_{i,i}E_{i,i}+\sum_{i=\lfloor\frac{n}{2}\rfloor+2}^nk_{\lfloor\frac{n}{2}\rfloor+1,i}E_{\lfloor\frac{n}{2}\rfloor+1,i}.$$ Simplifying the restrictions of Group 3, we have that
\begin{align}
    k_{i,i}&=k_{n-i,n-i},\text{\ for\ }1\leq i\leq\Big\lfloor\frac{n}{2}\Big\rfloor-1,\\
    k_{\lfloor\frac{n}{2}\rfloor+1,n-i}&=-k_{\lfloor\frac{n}{2}\rfloor+1,n},\text{\ for\ }1\leq i\leq \Big\lfloor\frac{n}{2}\Big\rfloor-1,\\
    k_{n,n}&=k_{i,i},\text{\ for\ }1\leq i\leq \Big\lfloor\frac{n}{2}\Big\rfloor,\\
    k_{\lfloor\frac{n}{2}\rfloor,\lfloor\frac{n}{2}\rfloor}&=k_{\lfloor\frac{n}{2}\rfloor+1,\lfloor\frac{n}{2}\rfloor+1}+k_{\lfloor\frac{n}{2}\rfloor+1,n}.
\end{align}

\noindent
Equations (23), (25), and (26) imply that $k_{i,i}=k_{\lfloor\frac{n}{2}\rfloor+1,\lfloor\frac{n}{2}\rfloor+1}+k_{\lfloor\frac{n}{2}\rfloor+1,n},$ for all $i\in\left(\left[1,\lfloor\frac{n}{2}\rfloor\right]\cup\left[\lfloor\frac{n}{2}\rfloor+2,n\right]\right)~\cap~\mathbb{Z}.$\\
Combining this condition with Equation (24) and the type-A trace condition $\sum_{i=1}^nk_{i,i}=0,$ we choose $k_{i,i}=1,$ for all $i\in\left(\left[1,\lfloor\frac{n}{2}\rfloor\right]\cup\left[\lfloor\frac{n}{2}\rfloor+2,n\right]\right)\cap\mathbb{Z},$ which then determines $k_{\lfloor\frac{n}{2}\rfloor+1,\lfloor\frac{n}{2}\rfloor+1}=-n+1$ and $k_{\lfloor\frac{n}{2}\rfloor+1,n}=-k_{\lfloor\frac{n}{2}\rfloor+1,i}=n,$ for $\lfloor\frac{n}{2}\rfloor+2\leq i\leq n-1.$ Therefore, $B$ can be written as in the claim, and this completes the proof.
\end{proof}

\begin{theorem}
Each of the following pairs, consisting of a poset $\mathcal{P}$ and a one-form $\varphi_{\mathcal{P}},$ constitutes a contact toral-pair $(\mathcal{P},\varphi_{\mathcal{P}}).$
\begin{enumerate}
    \item[\textup(i\textup)] $\mathcal{P}_{5,n}=\{1,\dots,n\}$ with $1\preceq 2\preceq\dots\preceq n-1$ as well as $1\preceq 2\preceq\dots\preceq \lfloor\frac{n}{2}\rfloor-1\preceq n,$ and $$\varphi_{\mathcal{P}_{5,n}}=E_{1,1}^*+\sum_{i=1}^{\lfloor\frac{n-1}{2}\rfloor}E_{i,n-i}^*+\sum_{i=1}^{\lfloor\frac{n}{2}\rfloor-1}E_{i,n}^*+E_{\lfloor\frac{n}{2}\rfloor,n-1}^*.$$
    \item[\textup(ii\textup)] $\mathcal{P}_{5,n}^*=\{1,\dots,n\}$ with $2\preceq 3\preceq\dots\preceq n$ as well as $1\preceq \lceil\frac{n}{2}\rceil+2\preceq\lceil\frac{n}{2}\rceil+3\preceq\dots\preceq n,$ and $$\varphi_{\mathcal{P}_{5,n}^*}=E_{1,1}^*+\sum_{i=2}^{\lceil\frac{n}{2}\rceil}E_{i,n-i+2}^*+\sum_{i=\lceil\frac{n}{2}\rceil+2}^{n}E_{1,i}^*+E_{2,\lceil\frac{n}{2}\rceil+1}.$$
\end{enumerate}
\end{theorem}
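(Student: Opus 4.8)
The plan is to mirror the proof of Theorem~\ref{thm:ctpappendB}: verify the defining conditions of a contact toral-pair one at a time, and then establish the only nontrivial point — that $\varphi_{\mathcal{P}_{5,n}}$ is a contact form — by computing $\ker(d\varphi_{\mathcal{P}_{5,n}})$ in $\mathfrak{g}_A(\mathcal{P}_{5,n})$ and invoking Lemma~\ref{lem:kernel}. As there, I would carry out (i) in full and obtain (ii) by the order-reversing duality $E^*_{p,q}\mapsto E^*_{q,p}$, which takes $\mathcal{P}_{5,n}$ to a relabelling of $\mathcal{P}_{5,n}^*$ and $\varphi_{\mathcal{P}_{5,n}}$ to $\varphi_{\mathcal{P}_{5,n}^*}$. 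Write $\mathcal{P}=\mathcal{P}_{5,n}$ and $\varphi=\varphi_{\mathcal{P}_{5,n}}$.

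The combinatorial conditions are routine. The Hasse diagram of $\mathcal{P}$ is the chain $1\prec 2\prec\dots\prec n-1$ together with a pendant vertex $n$ covering $\lfloor\tfrac n2\rfloor-1$, so $\mathcal{P}$ is connected and $Ext(\mathcal{P})=\{1,n-1,n\}$, with $1$ minimal and $n-1,n$ maximal; this gives \textbf{(CP1)} and \textbf{(CP2)}. Among the diagonal functionals only $E_{1,1}^*$ appears in $\varphi$, which is \textbf{(CF1)}. The $n-1$ relations occurring in $\varphi-E_{1,1}^*$ — those of the form $(i,n-i)$, those of the form $(i,n)$, and $(\lfloor\tfrac n2\rfloor,n-1)$ — connect every vertex to $1$, so $\Gamma_{\varphi-E_{1,1}^*}(\mathcal{P})$ is a spanning subtree of the comparability graph and $\varphi-E_{1,1}^*$ is small, which is \textbf{(CF2)}. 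Moreover the sources of $\Gamma_{\varphi-E_{1,1}^*}(\mathcal{P})$ are $D_{\varphi-E_{1,1}^*}(\mathcal{P})=\{1,\dots,\lfloor\tfrac n2\rfloor\}$, an order ideal; the sinks are $U_{\varphi-E_{1,1}^*}(\mathcal{P})=\{\lfloor\tfrac n2\rfloor+1,\dots,n\}$, a filter; and no vertex is both, so $O_{\varphi-E_{1,1}^*}(\mathcal{P})=\emptyset$ — this is \textbf{(CF3)}; and $\Gamma_{\varphi-E_{1,1}^*}(\mathcal{P})$ contains $(1,n-1)$ and $(1,n)$, the only two edges of $\mathcal{P}_{Ext(\mathcal{P})}$, giving \textbf{(CF4)}.

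For the contact property I would take $B=\sum k_{i,j}E_{i,j}\in\ker(d\varphi)$ and process the equations $\varphi([E_{i,j},B])=0$ ($i\preceq j$) in three groups, exactly as in Theorem~\ref{thm:ctpappendB}: a first group — the equations $\varphi([E_{p,p},B])=0$ together with the ``far'' off-diagonal ones — that cascades to force $k_{i,j}=0$ for every $i\prec j$ except those supported in the single row associated with the pendant vertex $n$ and a bounded cluster of near-diagonal coordinates around index $\lfloor\tfrac n2\rfloor$; a second group relating the surviving coordinates to one another; and a third group that pins down the diagonal coordinates up to a single scalar, which combined with the type-A trace condition $\sum_i k_{i,i}=0$ leaves $\ker(d\varphi)$ one-dimensional. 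I would then exhibit explicit generators $B$ in the cases $n$ even and $n$ odd — close in spirit to, but not the same as, those of Theorem~\ref{thm:ctpappendB}, since here the pendant vertex is attached near the bottom of the chain and the summand $E^*_{\lfloor n/2\rfloor,n-1}$ points inside the chain rather than at $n$, which shifts the surviving off-diagonal block and changes the coefficients near index $\lfloor\tfrac n2\rfloor$. Since $\ker(d\varphi)$ is one-dimensional we get $\ind(\mathfrak{g}_A(\mathcal{P}))\le 1$, and since $\dim\mathfrak{g}_A(\mathcal{P})=\binom{n-1}{2}+n+\lfloor\tfrac n2\rfloor-2$ is odd while the index has the parity of the dimension, in fact $\ind(\mathfrak{g}_A(\mathcal{P}))=1$; as one checks directly that $\varphi(B)=E_{1,1}^*(B)\neq 0$ for the generator $B$ (every other summand of $\varphi$ vanishes on $B$ by Lemma~\ref{lem:2}), Lemma~\ref{lem:kernel} completes the proof.

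The step I expect to be the main obstacle is the bookkeeping in the kernel computation: choosing the order in which the equations $\varphi([E_{i,j},B])=0$ are applied so that they genuinely cascade, while correctly accounting for the asymmetry introduced by $E^*_{\lfloor n/2\rfloor,n-1}$ pointing at $n-1$ rather than at the pendant vertex $n$. The delicate region is the cluster of indices around $\lfloor\tfrac n2\rfloor$, where the equations coming from the relations $i\preceq\lfloor\tfrac n2\rfloor+1$, $i\preceq n-1$ and $i\preceq n$ interact; this is exactly where the kernel acquires its nonzero off-diagonal part, and getting the two parities right and confirming that in each case the system has rank $\dim\mathfrak{g}_A(\mathcal{P})-1$ is the only place that needs genuine care.
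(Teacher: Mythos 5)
Your plan coincides with the paper's proof: verify \textbf{(CP1)}, \textbf{(CP2)}, \textbf{(CF1)}--\textbf{(CF4)} directly, then compute $\ker(d\varphi)$ by processing the equations $\varphi([E_{i,j},B])=0$ in cascading groups, exhibit an explicit one-dimensional generator separately for $n$ even and $n$ odd, check $\varphi(B)=E_{1,1}^*(B)\neq 0$, and conclude via Lemma~\ref{lem:kernel} (your explicit parity argument for $\ind(\mathfrak{g}_A(\mathcal{P}))=1$ is a welcome addition that the paper leaves implicit). The only inaccuracy is your guess about where the kernel's off-diagonal part lives: the computation leaves a single surviving off-diagonal coordinate $k_{1,\lfloor n/2\rfloor}$ (generator $\sum_i E_{i,i}-nE_{\frac n2,\frac n2}+nE_{1,\frac n2}$ for $n$ even, and the analogous expression with coefficient $\tfrac n2$ spread over $E_{\lfloor n/2\rfloor,\lfloor n/2\rfloor}$ and $E_{\lfloor n/2\rfloor+1,\lfloor n/2\rfloor+1}$ for $n$ odd), not entries in a row attached to the pendant vertex $n$ -- but since you flag this as the part still to be pinned down, it does not affect the validity of the approach.
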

\begin{proof}
We prove ($i$), as ($ii$) follows via a symmetric argument. Let $\mathcal{P}=\mathcal{P}_{5,n}$ and $\varphi=\varphi_{\mathcal{P}_{5,n}}.$ Note immediately that $\mathcal{P}$ is connected and $|Ext(\mathcal{P})|=3,$ so $\mathbf{(CP1)}$ and $\mathbf{(CP2)}$ are satisfied. Now, with regard to $\varphi,$ we have the following:
\begin{itemize}
    \item $E_{p,p}^*$ is a nonzero summand of $\varphi$ precisely when $p=1,$
    \item $\varphi-E_{1,1}^*$ is clearly small,
    \item $U_{\varphi-E_{1,1}^*}(\mathcal{P})=\left[\lfloor\frac{n}{2}\rfloor+1,n\right]\cap\mathbb{Z}$ forms a filter of $\mathcal{P}$, $D_{\varphi-E_{1,1}^*}(\mathcal{P})=\left[1,\lfloor\frac{n}{2}\rfloor\right]\cap\mathbb{Z}$ forms an order ideal of $\mathcal{P}$, and $O_{\varphi-E_{1,1}^*}(\mathcal{P})=\emptyset,$ and
    \item $\Gamma_{\varphi-E_{1,1}^*}$ contains the only edges, $(1,n-1)$ and $(1,n),$ between elements of $Ext(\mathcal{P}).$
\end{itemize}
Thus, $\mathbf{(CF1)-(CF4)}$ are satisfied, and all that remains to show is that $\varphi$ is a contact form. We claim that the kernel of $d\varphi$ is one-dimensional and generated by an element of the following form:
$$B=\begin{cases}
\displaystyle\sum_{i=1}^{n}E_{i,i}-nE_{\frac{n}{2},\frac{n}{2}}+nE_{1,\frac{n}{2}}, & n\text{\ even};\\
\displaystyle\sum_{i=1}^{n}E_{i,i}-\frac{n}{2}(E_{\lfloor\frac{n}{2}\rfloor,\lfloor\frac{n}{2}\rfloor}+E_{\lfloor\frac{n}{2}\rfloor+1,\lfloor\frac{n}{2}\rfloor+1})+\frac{n}{2}E_{1,\lfloor\frac{n}{2}\rfloor}, & n\text{\ odd}.
\end{cases}$$
Note that $\varphi(B)\neq 0$ in both cases, so once the claim has been proven, an application of Lemma~\ref{lem:kernel} will complete the proof.

\medskip
\noindent
\textbf{Case 1:} $n$ even. Let $B=\sum k_{i,j}E_{i,j}\in\ker(d\varphi),$ and consider the following groups of conditions $B$ must satisfy:

\bigskip
\noindent
\textbf{Group 1:}
\begin{itemize}
    \item $\varphi([E_{\frac{n}{2},\frac{n}{2}},B])=k_{\frac{n}{2},n-1}=0,$
    \item $\varphi([E_{n-i,n-i},B])=-k_{i,n-i}=0,$ for $2\leq i\leq \frac{n}{2}-1,$
    \item $\varphi([E_{n-j,n-i},B])=-k_{i,n-j}=0,$ for $1\leq i<j\leq \frac{n}{2}-1,$
    \item $\varphi([E_{i,n-j},B])=k_{n-j,n-i}=0,$ for $1\leq i<j\leq \frac{n}{2},$
    \item $\varphi([E_{i,n-j},B])=-k_{j,i}=0,$ for $1\leq j<i\leq \frac{n}{2}-1.$
\end{itemize}

\bigskip
\noindent
\textbf{Group 2:}
\begin{itemize}
    \item $\varphi([E_{n-1,n-1},B])=-k_{1,n-1}-k_{\frac{n}{2},n-1}=0,$
    \item $\varphi([E_{n,n},B])=-\displaystyle\sum_{i=1}^{\frac{n}{2}-1}k_{i,n}=0,$
    \item $\varphi([E_{i,i},B])=k_{i,n-i}+k_{i,n}=0,$ for $1\leq i\leq \frac{n}{2}-1,$
    \item $\varphi([E_{\frac{n}{2},n-i},B])=-k_{i,\frac{n}{2}}+k_{n-i,n-1}=0,$ for $2\leq i\leq \frac{n}{2}-1,$
    \item $\varphi([E_{i,j},B])=k_{j,n-i}+k_{j,n}=0,$ for $1\leq i<j\leq \frac{n}{2}-1.$
\end{itemize}

\bigskip
\noindent
\textbf{Group 3:}
\begin{itemize}
    \item $\varphi([E_{i,n-i},B])=k_{n-i,n-i}-k_{i,i}=0,$ for $1\leq i\leq \frac{n}{2}-1,$
    \item $\varphi([E_{\frac{n}{2},n-1},B])=k_{n-1,n-1}-k_{\frac{n}{2},\frac{n}{2}}-k_{1,\frac{n}{2}}=0,$
    \item $\varphi([E_{i,n},B])=k_{n,n}-k_{i,i}-\displaystyle\sum_{j=1}^{i-1}k_{j,i}=0,$ for $1\leq i\leq \frac{n}{2}-1.$
\end{itemize}

The restrictions of Groups 1 and 2 imply that $B$ has the form $$B=\sum_{i=1}^nk_{i,i}E_{i,i}+k_{1,\frac{n}{2}}E_{1,\frac{n}{2}}.$$ Simplifying the restrictions of Group 3, we have that 
\begin{align}
    k_{i,i}&=k_{n-i,n-i},\text{\ for\ }1\leq i\leq \frac{n}{2}-1,\\
    k_{n,n}&=k_{i,i},\text{\ for\ }1\leq i\leq \frac{n}{2}-1\\
        k_{n-1,n-1}&=k_{\frac{n}{2},\frac{n}{2}}+k_{1,\frac{n}{2}}.
\end{align}
Combining these conditions with the type-A trace condition $\sum_{i=1}^nk_{i,i}=0,$ we choose $k_{i,i}=1,$ for all $i\in\left(\left[1,\frac{n}{2}-1\right]\cup\left[\frac{n}{2}+1,n\right]\right)\cap\mathbb{Z},$ which then determines $k_{\frac{n}{2},\frac{n}{2}}=-n+1$ and $k_{1,\frac{n}{2}}=n.$ Therefore, $B$ can be written as in the claim.

\bigskip
\noindent
\textbf{Case 2:} $n$ odd. Let $B=\sum k_{i,j}E_{i,j}\in\ker(d\varphi),$ and consider the following groups of conditions $B$ must satisfy:

\bigskip
\noindent
\textbf{Group 1:}
\begin{itemize}
    \item $\varphi([E_{n-i,n-i},B])=-k_{i,n-i}=0,$ for $1\leq i\leq \lfloor\frac{n}{2}\rfloor,$
    \item $\varphi([E_{n-j,n-i},B])=-k_{i,n-j},$ for $1\leq i<j\leq \lfloor\frac{n}{2}\rfloor,$
    \item $\varphi([E_{i,n-j},B])=k_{n-j,n-i}=0,$ for $1\leq i<j\leq \lfloor\frac{n}{2}\rfloor$
    \item $\varphi([E_{i,\lfloor\frac{n}{2}\rfloor},B])=k_{\lfloor\frac{n}{2}\rfloor,n-i}=0,$ for $1\leq i\leq \lfloor\frac{n}{2}\rfloor-1,$
    \item $\varphi([E_{i,n-j},B])=-k_{j,i}=0,$ for $1\leq j<i\leq \lfloor\frac{n}{2}\rfloor-1.$
\end{itemize}

\bigskip
\noindent
\textbf{Group 2:}
\begin{itemize}
    \item $\varphi([E_{n,n},B])=-\displaystyle\sum_{i=1}^{\lfloor\frac{n}{2}\rfloor-1}k_{i,n}=0,$
    \item $\varphi([E_{i,i},B])=k_{i,n-i}+k_{i,n}=0,$ for $1\leq i\leq \lfloor\frac{n}{2}\rfloor-1,$
    \item $\varphi([E_{\lfloor\frac{n}{2}\rfloor,\lfloor\frac{n}{2}\rfloor},B])=k_{\lfloor\frac{n}{2}\rfloor,\lfloor\frac{n}{2}\rfloor+1}+k_{\lfloor\frac{n}{2}\rfloor,n-1}=0,$
    \item $\varphi([E_{i,j},B])=k_{j,n-i}+k_{j,n}=0,$ for $1\leq i<j\leq \lfloor\frac{n}{2}\rfloor-1,$
    \item $\varphi([E_{\lfloor\frac{n}{2}\rfloor,n-i},B])=-k_{i,\lfloor\frac{n}{2}\rfloor}+k_{n-i,n-1}=0,$ for $2\leq i\leq \lfloor\frac{n}{2}\rfloor-1.$
\end{itemize}

\bigskip
\noindent
\textbf{Group 3:}
\begin{itemize}
    \item $\varphi([E_{i,n-i},B])=k_{n-i,n-i}-k_{i,i}=0,$ for $1\leq i\leq \lfloor\frac{n}{2}\rfloor-1,$
    \item $\varphi([E_{\lfloor\frac{n}{2}\rfloor,\lfloor\frac{n}{2}\rfloor+1},B])=k_{\lfloor\frac{n}{2}\rfloor+1,\lfloor\frac{n}{2}\rfloor+1}-k_{\lfloor\frac{n}{2}\rfloor,\lfloor\frac{n}{2}\rfloor}+k_{\lfloor\frac{n}{2}\rfloor+1,n-1}=0,$
    \item $\varphi([E_{\lfloor\frac{n}{2}\rfloor,n-1},B])=k_{n-1,n-1}-k_{\lfloor\frac{n}{2}\rfloor,\lfloor\frac{n}{2}\rfloor}-k_{1,\lfloor\frac{n}{2}\rfloor}=0,$
    \item $\varphi([E_{i,n},B])=k_{n,n}-k_{i,i}-\displaystyle\sum_{j=1}^{i-1}k_{j,i}=0,$ for $1\leq i\leq \lfloor\frac{n}{2}\rfloor-1.$
\end{itemize}

The restrictions of Groups 1 and 2 imply that $B$ has the form $$x=\sum_{i=1}^nk_{i,i}E_{i,i}+k_{1,\lfloor\frac{n}{2}\rfloor}E_{1,\lfloor\frac{n}{2}\rfloor}.$$
Simplifying the restrictions of Group 3, we have that
\begin{align}
    k_{i,i}&=k_{n-i,n-i},\text{\ for\ }1\leq i\leq\Big\lfloor\frac{n}{2}\Big\rfloor-1\\
    k_{\lfloor\frac{n}{2}\rfloor,\lfloor\frac{n}{2}\rfloor}&=k_{\lfloor\frac{n}{2}\rfloor+1,\lfloor\frac{n}{2}\rfloor+1}\\
    k_{n-1,n-1}&=k_{\lfloor\frac{n}{2}\rfloor,\lfloor\frac{n}{2}\rfloor}+k_{1,\lfloor\frac{n}{2}\rfloor}\\
    k_{n,n}&=k_{i,i},\text{\ for\ }1\leq i\leq\Big\lfloor\frac{n}{2}\Big\rfloor-1.
\end{align}
Combining these conditions with the type-A trace condition $\sum_{i=1}^nk_{i,i}=0,$ we choose $k_{i,i}=1,$ for all $i\in\left(\left[1,\lfloor\frac{n}{2}\rfloor-1\right]\cup\left[\lfloor\frac{n}{2}\rfloor+2,n\right]\right)\cap\mathbb{Z},$ which then determines $k_{\lfloor\frac{n}{2}\rfloor,\lfloor\frac{n}{2}\rfloor}=k_{\lfloor\frac{n}{2}\rfloor+1,\lfloor\frac{n}{2}\rfloor+1}=-\frac{n}{2}+1$ and $k_{1,\lfloor\frac{n}{2}\rfloor}=\frac{n}{2}.$ Therefore, $B$ can be written as in the claim, and this completes the proof.
\end{proof}

\end{document}